\newcommand{\ee}{\mathbb{E}}
\renewcommand{\d}{{\rm d}}
\theoremstyle{theorem}
\newtheorem{lemma}{Lemma}[section]
\newtheorem{theorem}{Theorem}[section]
\newtheorem{proposition}{Proposition}[section]
\newtheorem{remark}{Remark}[section]
\numberwithin{equation}{section}
\begin{document}

\title{Numerical Approximation of Stochastic Time-Fractional Diffusion}
\author{Bangti Jin\thanks{Department of Computer Science, University College London, Gower Street, London WC1E 6BT, UK (\texttt{bangti.jin@gmail.com, b.jin@ucl.ac.uk})}
\and Yubin Yan\thanks{Department of Mathematics, University of Chester, Chester CHI 4BJ, UK (\texttt{y.yan@chester.ac.uk})}
\and Zhi Zhou\thanks{Department of Applied Mathematics, The Hong Kong Polytechnic University, Kowloon, Hong Kong (\texttt{zhizhou@polyu.edu.hk})}}

\maketitle

\begin{abstract}
We develop and analyze a numerical method for stochastic time-fractional diffusion driven by additive
fractionally integrated Gaussian noise. The model involves two nonlocal terms in time, i.e., a Caputo
fractional derivative of order $\alpha\in(0,1)$, and fractionally integrated Gaussian noise (with a
Riemann-Liouville fractional integral of order $\gamma \in[0,1]$ in the front). The numerical scheme
approximates the model in space by the Galerkin method with continuous piecewise linear finite elements
and in time by the classical Gr\"unwald-Letnikov method, and the noise by the $L^2$-projection. Sharp
strong and weak convergence rates are established, using suitable nonsmooth data error estimates for the
deterministic counterpart. Numerical results are presented to support the theoretical findings.\\
\textbf{Keywords}: stochastic time-fractional diffusion, Galerkin finite element method,  Gr\"unwald-Letnikov
method, strong convergence, weak convergence
\end{abstract}

\section{Introduction}

In this work, we consider numerical methods for solving the following time-fractional
diffusion equation driven by fractionally integrated additive Gaussian noise, with   $0< \alpha <1, \, 0 \leq  \gamma  \leq 1$:
\begin{equation} \label{eqn:sfde}
   \partial_t^\alpha  u(t) +  A u(t) =  {_0I_t^\gamma} \dot{W}(t), \quad \forall  0 < t \leq T,\qquad  \mbox{with }
      u(0) =u_{0},
\end{equation}
where the notation $_0I_t^\gamma v(t) $ denotes the  Riemann-Liouville fractional
integral of order $\gamma>0$ of a function $v:[0,T]\to\mathbb{R}$ defined by
\begin{equation*}
 _{0}I_t^\gamma v(t)   = \frac{1}{\Gamma(\gamma)}   \int_{0}^{t} (t- s)^{\gamma -1}  v(s) \,\d s,
\end{equation*}
where $\Gamma(\cdot)$ denotes the Gamma function defined by $\Gamma(z)=\int_0^\infty s^{z-1}e^{-s}\d s$ (for $\Re z>0$),
with the convention $ _{0}I_t^0 v(t) = v(t)$. For $\gamma\in(-1,0)$, $_0I_t^\gamma v$ denotes the Riemann-Liouville
fractional derivative of order $-\gamma\in(0,1)$, defined by $_0I_t^\gamma v:=({_0I_t^{1+\gamma}}v(t))'$. The notation
$\partial_t^\alpha  v(t), \, 0 < \alpha <1, $ denotes the Caputo fractional derivative of order $\alpha$ defined by
\cite[p. 91]{KilbasSrivastavaTrujillo:2006}
\begin{equation*}
\partial_t^\alpha  v(t) = {_0I_t^{1-\alpha}}  v'(t).
\end{equation*}
In the model \eqref{eqn:sfde}, the operator $A$ denotes the negative Laplacian $ - \Delta$ with a zero Dirichlet
boundary condition in a convex polygonal domain $ D  \subset \mathbb{R}^{d}\, (d=1, 2, 3)$. Then the domain
$\mathcal{D}(A)$ of the operator $A$ is given by $ H_{0}^{1}( D ) \cap H^{2}(D)$. The noise $W(t)$ is given by
a Wiener process with a covariance operator $Q$ on a filtered probability space $( \Omega, \mathcal{F},
\mathbb{P},\{\mathcal{F}_t\}_{t\geq0})$, with $ \{\mathcal{F}_{t} \}_{t \geq0}$ being an increasing filtration
of $\sigma$-fields $\mathcal{F}_t\subset\mathcal{F}$, each of which contains all $(\mathcal{F},\mathbb{P})$-null
sets.  Let $\ee$ denote the expectation (with respect to $\mathbb{P}$). The function $u_{0}$ is an
$\mathcal{F}_0$-measurable random variable, and  belongs to $L^2(D)$ or its subspace. In order to ensure the
well-posedness of problem \eqref{eqn:sfde} \cite[pp. 1473--1474]{ChenKimKim:2015}, we assume the following condition:
\begin{equation}\label{ass:alpha}
\alpha\in(0,1),\quad \gamma\in[0,1]\quad \mbox{and}\quad \alpha+\gamma>1/2.
\end{equation}

The deterministic counterpart of the model \eqref{eqn:sfde}, commonly known as subdiffusion, has been extensively
studied in the literature over the last few decades \cite{JinLazarovZhou:2018review}, due to its numerous
applications in engineering, physics and biology \cite{MetzlerJeonCherstvy:2014}. The noise term
$W(t)$ in the model \eqref{eqn:sfde} is to describe random effects on transport of particles in
medium with memory or particles subject to sticking and trapping \cite{ChenKimKim:2015}. The
fractionally integrated noise $_0I_t^\gamma \dot W(t)$ reflects the fact that the internal energy depends also on the past
random effects. In recent years, stochastic fractional diffusion, e.g., the model \eqref{eqn:sfde},
has been very actively researched \cite{ChenKimKim:2015,Chen:2016,ChenHuNualart:2015,AnhLeonenkoRuiz:2017,
LiuRocknerSilva:2017}. Chen et al \cite{ChenKimKim:2015} studied the $L^{2}$ theory of \eqref{eqn:sfde}
in both divergence and non-divergence forms. Anh et al \cite{AnhLeonenkoRuiz:2017}
discussed sufficient conditions for a Gaussian solution (in the mean-square sense)
and derived temporal, spatial and spatiotemporal H\"older continuity of the solution.
Chen \cite{Chen:2016} analyzed moments, H\"{o}lder continuity and intermittency of the solution
for 1D nonlinear stochastic subdiffusion. Liu et al \cite{LiuRocknerSilva:2017} analyzed the existence
and uniqueness of the solution to \eqref{eqn:sfde} with fairly general quasi-linear elliptic operators.

To the best of our knowledge, there seems no work on the numerical analysis of the stochastic time-fractional PDEs
driven by fractionally integrated Gaussian noise, except in a few special cases. It is precisely this gap
that we aim at filling in the present work. Specifically, we develop a numerical scheme for problem
\eqref{eqn:sfde}, based on the standard Galerkin finite element method (FEM) with continuous linear finite elements
in space, the classical Gr\"unwald-Letnikov method (i.e., backward Euler convolution quadrature \cite{Lubich:1986,
LubichSloanThomee:1996,JinLazarovZhou:2016sisc}) in time and the $L^2$-projection of the noise, cf. \eqref{eqn:fullydiscrete}.
The scheme combines discretization techniques for subdiffusion \cite{JinLazarovZhou:2016sisc} and stochastic
heat equation \cite{Yan:2005}, and it is easy to implement. We prove nearly sharp strong and weak convergence rates
for the fully discrete approximation in Theorems \ref{thm:strong} and \ref{thm:weak}, respectively, which represent
the main theoretical contributions of the work.

The analysis employs an operator theoretic approach, which was first developed in the work \cite{Yan:2005}
for the stochastic heat equation and subsequently used in many works. In the analysis, one crucial ingredient
is certain nonsmooth data error estimates for solution operator associated with the deterministic inhomogeneous problem, i.e., the discrete
solution operators $\bar E_h(t)$ and $B_j$ in Section \ref{ssec:nonsmooth}. Due to the presence of the
fractional integral ${_0I_t^\gamma}$, such estimates differ greatly from that for subdiffusion, and
are still unavailable. We employ Laplace transform and generating function
\cite{LubichSloanThomee:1996} to derive requisite estimates. We refer interested readers to
\cite{CuestaLubichPalencia:2006,LinXu:2007,JinLazarovZhou:2013,JinLazarovZhou:2016sisc,McLeanMustapha:2015}
for related works on nonsmooth data estimates for deterministic subdiffusion; see also the survey
\cite{JinLazarovZhou:2018review} and the references therein. For the weak convergence, we employ a powerful
tool, i.e., Malliavin calculus, recently developed in \cite{AnderssonKruseLarsson:2016}. The
technique in \cite{AnderssonKruseLarsson:2016} relies on a new family of refined Sobolev-Malliavin spaces
that capture the temporal integrability of the Malliavin derivative, and a new Burkholder type inequality
in the dual norm of these Sobolev-Malliavin spaces. The challenge lies in
deriving the error estimate in the dual norm of refined Sobolev-Malliavin spaces.

\begin{table}[hbt!]
\centering
  \caption{Convergence rates for the numerical scheme with $u_0=0$ and trace class noise. \label{tab:conv}}
  \begin{tabular}{l|cc}
  \hline
  $(\alpha,\gamma)$ & strong  \\
  \hline
  $(1,0)$ & $O(h+\tau^{\frac12-\epsilon})$ \cite{Yan:2005} \\
  \hline
  $\gamma<1/2$ & $O(h^{2-\frac{1-2\gamma}{\alpha}-\epsilon}+\tau^{\min(1,\alpha+\gamma-\frac{1}{2}-\epsilon)})$ \\
  $\gamma>1/2$ & $O(h^2+\tau^{\min(1,\alpha+\gamma-\frac{1}{2}-\epsilon)})$ \\
  \hline
  \end{tabular}
  \quad
  \begin{tabular}{l|ccc}
  \hline
  $(\alpha,\gamma)$ & weak \\
  \hline
  $(1,0)$ &  $O(h^2+\tau^{1-\epsilon})$ \\
  \hline
  $\gamma<\frac{1-\alpha}{2}$ & $O(h^{4-\frac{2(1-2\gamma)}{\alpha}-\epsilon}+\tau^{\min(1,\alpha+\gamma-\epsilon)})$\\
  $\gamma>\frac{1-\alpha}{2}$ & $O(h^2+\tau^{\min(1,\alpha+\gamma-\epsilon)})$\\
  \hline
  \end{tabular}
\end{table}

Theorems \ref{thm:strong} and \ref{thm:weak} indicate that the fractionally integrated Gaussian noise
$_0I_t^\gamma \dot W(t) $ induces convergence behaviors substantially different from that of stochastic
diffusion. In particular, the fractional order $\gamma$ can exert strong influence on both strong and weak
convergence rates: dependent of the $\gamma$ value, with $h$ and $\tau$ being the mesh size and time step
size, respectively, the spatial convergence rate may reach $O(h^2)$ and the temporal convergence rate
$O(\tau)$ in both strong and weak sense, and the usual dichotomy of the weak temporal rate being twice the strong
one is generally not valid; See Table \ref{tab:conv} for convergence rates when the noise $W(t)$ belongs to
trace class, where the results for stochastic diffusion (i.e., $(\alpha,\gamma)=(1,0)$) are also given
for comparison. In the table, $\epsilon$ is an arbitrarily small positive constant. Further, the results for
stochastic diffusion are recovered upon letting $\alpha\to1^-$ and $\gamma\to0^+$. These theoretical
findings are fully supported by the extensive numerical experiments in Section \ref{sec:numeric}.

To the best of our knowledge, there is no work on the numerical analysis of the general model \eqref{eqn:sfde},
except some special cases, which we describe next. First, the model \eqref{eqn:sfde} represents the
fractional analogue of the classical heat equation (but with a nonstandard noise term), and recovers the latter model for the
special choice $(\alpha=1,\gamma=0)$. Thus, naturally our results generalize the corresponding results
for stochastic heat equation; see Table \ref{tab:conv} for the case of trace class noise.
The literature on stochastic heat equation is vast. See, e.g., \cite{AllenZhang:1998,DuZhang:2002,Yan:2005} for
strong convergence, and, e.g., \cite{DebusschePrintems:2009,AnderssonLarsson:2016,BrehierHairerStuart:2018},
for weak convergence, and interested readers are also referred to the surveys \cite{JentzenKloeden:2009,
Kruse:2014} and references therein for further pointers to the vast literature. Second, the stochastic fractional
model \eqref{eqn:sfde} was studied earlier for the case $\gamma=1-\alpha$ \cite{LiYang:2017}, where the strong
convergence of a discontinuous Galerkin method in time was analyzed; see also \cite{GunzburgerLiWang2:2017} for a
related fractional-order model with white noise.

The  rest of the paper is organized as follows. In Section \ref{sec:prelim}, we give preliminaries on
Wiener process and Malliavin calculus. In Section \ref{sec:fully}, we describe the numerical scheme,
and in Section \ref{sec:nonsmooth}, we derive crucial nonsmooth data error estimates for deterministic
subdiffusion. The strong and weak error estimates for approximations are given in Section \ref{sec:err-stochastic}.
In Section \ref{sec:numeric}, we discuss the implementation of the noise,
and present numerical results to support the theoretical analysis. In Appendix \ref{app:reg}, we present
some regularity results. Throughout, the notation $c$, with or without a subscript, denotes a generic
constant, which may differ at each occurrence, but it is always independent of the mesh size $h$ and
the time step size $\tau$. Further, $\epsilon>0$ is always a small positive constant.

\section{Preliminaries}\label{sec:prelim}
In this section, we collect preliminary facts on Wiener process and Malliavin calculus.
\subsection{Wiener process}

Let $(U,\|\cdot\|_U,\langle\cdot,\cdot\rangle_U)$ and $(V,\|\cdot\|_V,\langle\cdot,\cdot\rangle_V)$ be separable Hilbert
spaces. Let $L(U;V)$ be the Banach space of all bounded linear operators $U\to V$, and we denote $L(U)=L(U;U)$.
$\mathcal{L}_2(U;V)\subset L(U;V)$ denotes the subspace of all Hilbert-Schmidt operators, with norms and
inner products respectively given by
\begin{equation*}
  \|T\|_{\mathcal{L}_2(U;V)}^2  = \sum_{j\in\mathbb{N}}\|Tu_j\|_V^2,\quad \langle S,T\rangle = \sum_{j\in\mathbb{N}}\langle Su_j,Tu_j\rangle_V.
\end{equation*}
Both are independent of specific choice of orthonormal basis $\{u_j\}_{j\in\mathbb{N}}$.

Let $H= L^{2}(D)$ with the norm $\| \cdot \|$ and the inner product $(\cdot, \cdot )$. A Wiener process $W(t)$
with covariance $Q$ may be characterized as follows. Let $Q$ be a bounded, linear, selfadjoint, positive definite
operator on $H$, with the pairs of eigenvalue and eigenfunction denoted by $\{(\gamma_{\ell},e_\ell)\}_{\ell=1
}^\infty$. Let $H_0=Q^\frac{1}{2}H$ be the Hilbert space endowed with the inner product $\langle u,v\rangle_{H_0}
=(Q^{-\frac12}u,Q^{-\frac12}v)$. Let $\{\beta_{\ell} (t)\}_{\ell=1}^\infty$ be a sequence of real-valued
independently and identically distributed (i.i.d.) Brownian motions. Then the series
\begin{equation}\label{eqn:Wiener}
W(t) =\sum_{\ell=1}^{\infty} \gamma_{\ell}^\frac{1}{2} e_{\ell} \beta_{\ell}(t),
\end{equation}
is a Wiener process with covariance operator $Q$.
If $Q$ is of trace class, i.e., $\sum_{\ell=1}^\infty\gamma_\ell<\infty$, then $W(t)$ is an $H$-valued process.
If $Q$ is not in trace class, e.g.,  $Q=I$, then $W(t)$ does not belong to  $H$,
in which case $W(t)$ is called a cylindrical Wiener process \cite[Chapter 4]{DaPratoZabczyk:2014}.

The notation $\mathcal{L}_{2}^{0} =\mathcal{L}_2(H_0;H)$
denotes the space of Hilbert-Schmidt operators from $H_0$ to $H$, i.e.,
\begin{equation*}
  \mathcal{L}_2^0 = \Big\{\Phi\in L(H): \ \sum_{\ell=1}^\infty\|\Phi Q^\frac{1}{2}e_\ell\|^2<\infty\Big\},
\end{equation*}
with the norm $\|\cdot\|_{\mathcal{L}_2^0}$ defined by
\begin{equation*}
  \|\Phi\|_{\mathcal{L}_2^0} = \Big(\sum_{\ell=1}^\infty\|\Phi Q^\frac12e_\ell\|^2\Big)^\frac12,
\end{equation*}
where $\{e_\ell\}_{\ell=1}^\infty$ is an orthonormal basis of $H$. This definition is independent of the choice of the basis.
For any $ \Phi \in L^{2}(0, T; \mathcal{L}_{2}^{0})$, $\int_{0}^{t}\Phi (s)  \d W(s)$
is well defined in the sense of stochastic integral \cite[p. 95]{DaPratoZabczyk:2014}.

For any $p\geq 1$, we define the space of $H$-valued $p$-integrable random variables by
\begin{equation*}
L^p(\Omega; H) = \Big \{ v: \, \ee \| v \|^p = \int_{\Omega} \| v(\omega) \|^p \, \d \mathbb{P} (\omega) < \infty \Big \},
\end{equation*}
with norm $ \| v \|_{L^p(\Omega; H)}= (\ee \| v\|^p)^\frac1p $. Similarly, we define
the space $L^{p}(\Omega; V)$, for a Banach space $V$.

\subsection{Malliavin calculus}
In this part, we recall some concepts related to Malliavin derivatives of $H$-valued random
variables. More details can be found in, e.g., \cite{AnderssonKruseLarsson:2016,Kruse:2014}.
Let $\mathcal{G}_{p}^{\infty}(\mathbb{R}^{n}; \mathbb{R})$ be the space of all infinitely many times
G\^ateaux differentiable mappings $\phi: \mathbb{R}^{n} \to \mathbb{R}$ such that $\phi$ and all its
derivatives satisfy a polynomial bound. Let $\mathcal{B}(H;\mathbb{R})$ denote the Banach space of all bilinear
mappings $b: H \times H \to \mathbb{R}$ equipped with the norm
\begin{equation*}
\| b \|_{\mathcal{B}(H;\mathbb{R})} = \sup_{0\neq u_1, u_2 \in H} \frac{ | b \cdot (u_{1}, u_{2}) |}{\| u_{1} \| \|u_{2}\|}.
\end{equation*}
For any $\ell\geq 2$, let $\Phi \in \mathcal{G}_{p}^{2,\ell}(H;\mathbb{R})$, with
\begin{equation}
\mathcal{G}_{p}^{2,\ell}(H;\mathbb{R})
=\Big\{ \Phi: \, H \to \mathbb{R}, \; | \Phi |_{\mathcal{G}_{p}^{2,\ell} (H;\mathbb{R})}
=\sup_{u \in H} \frac{ \| \Phi^{(2)} (u) \|_{\mathcal{B}(H;\mathbb{R})}}{(1+ \| u \|_{H}^{\ell-2})} <\infty\Big\},
\end{equation}
where $\Phi^{(2)} (u) \in \mathcal{B}(H; \mathbb{R})$ denotes the second-order G\^ateaux derivative of
$\Phi\in\mathcal{G}_p^{2,\ell}(H;\mathbb{R})$ at $u \in H$.

For $q \in [2, \infty]$, $S^{q} (\mathbb{R})$ denotes the
class of smooth cylindrical random variables of the form
\[
F= f \Big ( \int_{0}^{T} \phi_{1}(s) \, \d W(s), \dots, \int_{0}^{T} \phi_{n}(s) \, \d W(s) \Big ),
\]
where
$f \in \mathcal{G}_{p}^{\infty} (\mathbb{R}^{n};\mathbb{R})$ and $\{\phi_{k}\}_{k=1}^{n} \subset L^{q} (0, T;
\mathcal{L}_2(H;\mathbb{R}))$, $n \in \mathbb{N}$. (Recall that the space $\mathcal{L}_2(H;\mathbb{R})$ is defined by
$\mathcal{L}_2(H;\mathbb{R})=\big \{ \Phi \in L(H; \mathbb{R}): \sum_{\ell=1}^{\infty} |  \Phi Q^\frac{1}{2}
e_{\ell} |^{2}_{\mathbb{R}} < \infty  \big \}$, where $L(H; \mathbb{R})$ denotes the Hilbert space of all bounded
operators from $H$ to $\mathbb{R}$ and $| \cdot |_{\mathbb{R}}$ denotes the Euclidean norm in $\mathbb{R}$.) For $F \in
S^{q}(\mathbb{R})$, we define the Malliavin derivative by
\[
D F (\sigma) = \sum_{j=1}^{n} \partial_{j} f  \Big ( \int_{0}^{T}\phi_{1}(s) \, \d W(s), \dots, \int_{0}^{T}\phi_{n}(s) \, \d W(s) \Big )
\otimes \phi_{j} (\sigma),\quad \sigma\in [0,T].
\]
Note that $DF(\sigma)$ is an $H_0$-valued stochastic process.

Next, we recall the Malliavin derivative for $H$-valued random variables. Let $S^q(H)$ be the
space of all $H$-valued random variables of the form $Y= \sum_{i=1}^mv_{i} \otimes F_{i}$ with
$\{v_i\}_{i=1}^{m} \subset H $, $\{F_i\}_{i=1}^m \subset S^q(\mathbb{R})$, $m \in \mathbb{N}$,
where $\otimes$ denotes the tensor product. Then the Malliavin derivative of $Y \in S^{q}(H)$ is defined by
\[
D Y (\sigma) = \sum_{i=1}^{m} v_i \otimes D F_i(\sigma).
\]
Since  $D F_{i} (\sigma)$ is an $H_0$-valued stochastic process, $D Y (\sigma)$ is an $H \otimes
H_0 = \mathcal{L}_2^0$-valued process.

For $p \in [2, \infty),q\in [2,\infty]$, $ S^q(H) \subset L^p(\Omega;H)$ is dense
\cite[Lemma 3.1]{AnderssonKruseLarsson:2016}. Further, the operator $D: S^q(H) \to L^p(\Omega,
L^q(0, T;\mathcal{L}_2^0))$ is closable \cite[Lemma 3.2]{AnderssonKruseLarsson:2016}.
Let $M^{1, p, q} (H)$ be the closure of $S^{q}(H)$ with respect to the norm
\[
\|Y\|_{M^{1,p,q}(H)}= \big(\| Y\|_{L^p(\Omega;H)}^p+\| D Y \|_{L^p(\Omega;L^q(0, T;\mathcal{L}_2^0))}^p\big)^\frac1p.
\]
Then $M^{1, p, q}(H)$ are Banach spaces, densely embedded into $L^{2}(\Omega;H)$, and
$M^{1, p, q}(H) \subset L^{2}(\Omega;H) \subset M^{1, p, q}(H)^{*}$ is a Gel'fand triple.
Further, we denote $M^{1, p}(H) = M^{1, p, p}(H)$ and $M^{1, p}(H)^{*} = M^{1, p, p}(H)^{*}$

We shall use frequently Burkholder's inequality (\cite[Lemma 7.2]{DaPratoZabczyk:2014} and
\cite[Lemma 3.5]{AnderssonKruseLarsson:2016}). For any exponent $p\geq 1$, $p'\geq1$ denotes its
conjugate exponent, i.e., $p^{-1}+{p'}^{-1}=1$.
\begin{lemma} \label{lem:burkholder}
For $ p \geq 2$, let $(\Phi (t))_{t \in [0, T]}$ be a predictable and $\mathcal{L}_2^0$-valued stochastic process
such that $\|\Phi \|_{L^{p}(\Omega;L^{2}(0, T;\mathcal{L}_2^0))}  <\infty$. Then there hold
\begin{align}
\| \int_{0}^{T} \Phi (t) \d W(t) \|_{L^p(\Omega; H)}
&\leq c \| \Phi \|_{L^p(\Omega; L^{2}(0, T;\mathcal{L}_2^0))},\label{burkholder0}\\
 \| \int_{0}^{t} \Phi (t)\d W(t) \|_{M^{1, p}(H)^{*}} &\leq c\|\Phi \|_{L^{p'} (\Omega;L^{p'}(0,T;\mathcal{L}_2^0))}. \label{burkholder}
\end{align}
\end{lemma}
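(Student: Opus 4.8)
The plan is to prove the two estimates by separate arguments: \eqref{burkholder0} is the Hilbert-space-valued Burkholder--Davis--Gundy (BDG) inequality applied to the martingale $t\mapsto\int_0^t\Phi\,\d W$, while \eqref{burkholder} follows from the Malliavin integration-by-parts (duality) identity combined with H\"older's inequality. Note that the standing hypothesis $\|\Phi\|_{L^p(\Omega;L^2(0,T;\mathcal{L}_2^0))}<\infty$ with $p\ge2$ already guarantees $\int_0^t\Phi\,\d W\in L^2(\Omega;H)\subset M^{1,p}(H)^*$, so in \eqref{burkholder} the real content is the sharpening of the bound to the weaker right-hand side norm, which uses only the exponent $p'\le2$ and hence exploits low temporal integrability of $\Phi$.

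For \eqref{burkholder0}, I would first reduce to elementary (piecewise-constant-in-time, adapted) integrands $\Phi$, which are dense among predictable processes in $L^p(\Omega;L^2(0,T;\mathcal{L}_2^0))$. For such $\Phi$, the process $M_t:=\int_0^t\Phi(s)\,\d W(s)$ is a continuous square-integrable $H$-valued martingale with $M_0=0$ whose quadratic variation is $[M]_t=\int_0^t\|\Phi(s)\|_{\mathcal{L}_2^0}^2\,\d s$ (the It\^o isometry identity, immediate for step processes). Invoking the BDG inequality for continuous $H$-valued martingales, $\ee[\sup_{t\le T}\|M_t\|^p]\le c\,\ee\big([M]_T^{p/2}\big)$ (one route: apply the scalar BDG inequality to $\|M_t\|^2$ via It\^o's formula together with Doob's maximal inequality; alternatively, quote \cite[Lemma 7.2]{DaPratoZabczyk:2014}), one gets
\[
\Big\|\int_0^T\Phi\,\d W\Big\|_{L^p(\Omega;H)}^p\le\ee\Big[\sup_{t\le T}\|M_t\|^p\Big]\le c\,\ee\Big(\int_0^T\|\Phi(s)\|_{\mathcal{L}_2^0}^2\,\d s\Big)^{p/2}=c\,\|\Phi\|_{L^p(\Omega;L^2(0,T;\mathcal{L}_2^0))}^p,
\]
and a limit along the approximating step processes completes this part. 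The only delicate ingredient is the $H$-valued BDG inequality itself, which I would take from the cited references rather than reprove.

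For \eqref{burkholder}, it suffices (after replacing $\Phi$ by $\Phi\mathbf{1}_{[0,t]}$) to bound $\|\int_0^T\Phi\,\d W\|_{M^{1,p}(H)^*}$, and by the definition of the dual norm together with the density of $S^q(H)$ in $M^{1,p}(H)$,
\[
\Big\|\int_0^T\Phi\,\d W\Big\|_{M^{1,p}(H)^*}=\sup\Big\{\,\Big|\ee\big\langle{\textstyle\int_0^T}\Phi\,\d W,\,Y\big\rangle\Big|:\ Y\in S^q(H),\ \|Y\|_{M^{1,p}(H)}\le1\,\Big\}.
\]
For predictable $\Phi$ the It\^o integral coincides with the Skorokhod/divergence operator, whose adjoint is the Malliavin derivative $D$; hence for $Y\in S^q(H)$ the integration-by-parts identity
\[
\ee\big\langle\int_0^T\Phi(s)\,\d W(s),\,Y\big\rangle=\ee\int_0^T\big\langle\Phi(s),\,DY(s)\big\rangle_{\mathcal{L}_2^0}\,\d s
\]
holds. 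Estimating the right-hand side by the Cauchy--Schwarz inequality for the Hilbert--Schmidt inner product pointwise in $(s,\omega)$ and then H\"older's inequality in $(s,\omega)$ with conjugate exponents $p'$ and $p$ gives
\[
\Big|\ee\int_0^T\langle\Phi(s),DY(s)\rangle_{\mathcal{L}_2^0}\,\d s\Big|\le\|\Phi\|_{L^{p'}(\Omega;L^{p'}(0,T;\mathcal{L}_2^0))}\,\|DY\|_{L^p(\Omega;L^p(0,T;\mathcal{L}_2^0))}\le\|\Phi\|_{L^{p'}(\Omega;L^{p'}(0,T;\mathcal{L}_2^0))}\,\|Y\|_{M^{1,p}(H)},
\]
the last step being the definition of the $M^{1,p}(H)$-norm; taking the supremum over $Y$ proves \eqref{burkholder}. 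I expect the main obstacle to be making the integration-by-parts identity rigorous beyond smooth cylindrical integrands: one verifies it directly for $\Phi\in S^q$ and $Y\in S^q(H)$ from the definitions of $D$ and of the stochastic integral, and then extends it to a general predictable $\Phi\in L^2(\Omega;L^2(0,T;\mathcal{L}_2^0))$ (in particular to any $\Phi$ satisfying the hypothesis) by a density-and-closedness argument, using the continuity of both sides in the respective norms --- this is the content of \cite[Lemma 3.5]{AnderssonKruseLarsson:2016}, which I would cite.
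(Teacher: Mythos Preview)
Your proposal is correct. The paper does not supply its own proof of this lemma but simply quotes it from \cite[Lemma 7.2]{DaPratoZabczyk:2014} for \eqref{burkholder0} and \cite[Lemma 3.5]{AnderssonKruseLarsson:2016} for \eqref{burkholder}; your sketch accurately reconstructs the arguments behind those cited results --- the $H$-valued BDG inequality via quadratic variation for the first estimate, and the Malliavin duality (It\^o $=$ Skorokhod on predictable integrands, then H\"older with exponents $p'$ and $p$) for the second.
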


Last, we recall one result on the chain rule for Malliavin derivative \cite[Lemma 3.3]{AnderssonKruseLarsson:2016}.
\begin{lemma}\label{lem:bd-Malliavin}
Let $U, V$ be two separable Hilbert spaces and $\gamma \in C^{1}(U;V)$ satisfy for $u \in U$
\begin{align*}
 \| \gamma (u) \|_{V} \leq c( 1 + \| u \|_{U}^{1+ r}) \quad \mbox{and}\quad
\| \gamma^{\prime} (u) \|_{L(U;V)} \leq c( 1 + \| u \|_{U}^{r}),
\end{align*}
for some $r \geq 0$. Then for $ u \in M^{1, (1+r)p, q} (U)$ with $p >1$ and $q \geq 2$,
$ \gamma (u) \in  M^{1, p, q} (V)$ and
\begin{equation*}
\|\gamma (u) \|_{M^{1, p, q} (V)}
\leq c( 1+ \| u \|_{M^{1, (1+r)p, q} (U)}^{1+r})
\quad\mbox{and}\quad
D [ \gamma(u)](\sigma)
= \gamma^{\prime} (u) D [u] (\sigma), \; \sigma \in [0, T].
\end{equation*}
\end{lemma}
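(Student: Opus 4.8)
\medskip
\noindent\textbf{Proof idea.} The plan is to establish the chain rule $D[\gamma(u)](\sigma)=\gamma'(u)D[u](\sigma)$ first on a convenient dense class, then pass to the limit, exploiting the closability of $D$ together with the growth bounds on $\gamma,\gamma'$ to control the limits. First I would reduce to a bounded Lipschitz nonlinearity: for $\gamma\in C^1(U;V)$ with the stated polynomial bounds, set $\gamma_R:=\chi_R(\|\cdot\|_U)\,\gamma$, where $\chi_R\in C_c^\infty([0,\infty))$ equals $1$ on $[0,R]$, is supported in $[0,2R]$, and satisfies $|\chi_R'|\le c/R$; then $\gamma_R\in C_c^1(U;V)$ with $\sup_v\|\gamma_R(v)\|_V<\infty$ and $\sup_v\|\gamma_R'(v)\|_{L(U;V)}<\infty$, and moreover $\gamma_R(v)=\gamma(v)$, $\gamma_R'(v)=\gamma'(v)$ whenever $\|v\|_U<R$. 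Next, for fixed $R$, I would reduce to cylindrical $u$: since $S^q(U)$ is by definition dense in $M^{1,(1+r)p,q}(U)$, choose $u_k\in S^q(U)$ with $u_k\to u$ there, and, after passing to a subsequence, $u_k\to u$ and $Du_k\to Du$ almost surely.

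The base step treats $\gamma_R$ and a fixed cylindrical $u$. Writing $u=G(\xi_1,\dots,\xi_n)$ with $\xi_\ell=\int_0^T\phi_\ell(s)\,\d W(s)$ and $G:\mathbb{R}^n\to U$ smooth with range in a finite-dimensional subspace, the composition $\gamma_R\circ G\in C^1(\mathbb{R}^n;V)$ is bounded with bounded derivative; expanding it in an orthonormal basis of $V$, truncating the expansion, and mollifying each real-valued component produces elements of $S^q(V)$ whose Malliavin derivatives are given by the elementary finite-dimensional chain rule, and passing to the limit in these two auxiliary approximations (everything uniformly bounded, $\xi$ having Gaussian moments) together with the closability of $D$ yields $\gamma_R(u)\in M^{1,p,q}(V)$ with $D[\gamma_R(u)](\sigma)=\gamma_R'(u)D[u](\sigma)$; since there are only finitely many $\phi_\ell$, all $\mathcal{L}_2^0$-valued sums here are finite and no summability issue arises. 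Applying this to each $u_k$ and letting $k\to\infty$ along the chosen subsequence: $\gamma_R(u_k)\to\gamma_R(u)$ in $L^p(\Omega;V)$ since $\gamma_R$ is Lipschitz and $u_k\to u$ in $L^{(1+r)p}(\Omega;U)$, while $\gamma_R'(u_k)Du_k\to\gamma_R'(u)Du$ in $L^p(\Omega;L^q(0,T;\mathcal{L}_2^0))$ because $\gamma_R'(u_k)\to\gamma_R'(u)$ a.s., $\sup_k\|\gamma_R'(u_k)\|_{L(U;V)}<\infty$, and $Du_k\to Du$ in $L^p(\Omega;L^q(0,T;\mathcal{L}_2^0))$; a further appeal to closability of $D$ gives $\gamma_R(u)\in M^{1,p,q}(V)$ and $D[\gamma_R(u)]=\gamma_R'(u)Du$ for the original $u$.

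Finally, let $R\to\infty$. From $\|\gamma(v)\|_V\le c(1+\|v\|_U^{1+r})$ and $u\in L^{(1+r)p}(\Omega;U)$, dominated convergence gives $\gamma_R(u)\to\gamma(u)$ in $L^p(\Omega;V)$. For the derivatives, $\gamma_R'(u)h=\chi_R(\|u\|_U)\gamma'(u)h+\chi_R'(\|u\|_U)\|u\|_U^{-1}\langle u,h\rangle_U\,\gamma(u)$, so $\gamma_R'(u)=\gamma'(u)$ once $R>\|u\|_U$, hence $\gamma_R'(u)\to\gamma'(u)$ a.s.; moreover the cutoff term lives on the set where $\|u\|_U\sim R\ge 1$, where $R^{-1}(1+\|u\|_U^{1+r})\lesssim 1+\|u\|_U^{r}$, which yields the $R$-uniform pointwise bound $\|\gamma_R'(u)\|_{L(U;V)}\le c(1+\|u\|_U^{r})$. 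Consequently, writing $L^q=L^q(0,T;\mathcal{L}_2^0)$,
\[
\|\gamma_R'(u)Du\|_{L^p(\Omega;L^q)}\le c\,\big\|\,(1+\|u\|_U^{r})\,\|Du\|_{L^q}\,\big\|_{L^p(\Omega)}\le c\big(1+\|u\|_{L^{(1+r)p}(\Omega;U)}^{r}\big)\,\|Du\|_{L^{(1+r)p}(\Omega;L^q)},
\]
where the splitting $\tfrac1p=\tfrac{r}{(1+r)p}+\tfrac1{(1+r)p}$ via H\"older's inequality is exactly what forces the hypothesis $u\in M^{1,(1+r)p,q}(U)$; combined with $\gamma_R'(u)Du\to\gamma'(u)Du$ a.s. and dominated convergence in $L^p(\Omega;L^q)$, one last use of closability of $D$ gives $\gamma(u)\in M^{1,p,q}(V)$ and $D[\gamma(u)](\sigma)=\gamma'(u)D[u](\sigma)$. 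The asserted norm bound then follows at once by combining $\|\gamma(u)\|_{L^p(\Omega;V)}\le c(1+\|u\|_{L^{(1+r)p}(\Omega;U)}^{1+r})$ with the displayed estimate for the derivative term and the elementary inequality $(a^p+b^p)^{1/p}\le a+b$. The main obstacle is the base step: because $\gamma$ is only $C^1$, $\gamma(u)$ fails to be cylindrical even when $u$ is, so one must interpose the orthonormal-basis truncation in $V$ and the mollification of $\gamma$ restricted to the finite-dimensional range of $u$, and verify that these approximations are compatible with the finite-dimensional chain rule; the remaining two limit passages are routine H\"older and dominated-convergence bookkeeping.
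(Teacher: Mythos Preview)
The paper does not prove this lemma; it merely records it and attributes it to \cite[Lemma 3.3]{AnderssonKruseLarsson:2016}. So there is no in-paper proof to compare against, and your proposal supplies a self-contained argument where the paper offers none.

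Your route is the standard one and is correct in outline: cut off $\gamma$ to a bounded Lipschitz map $\gamma_R$, approximate $u$ by cylindrical $u_k\in S^q(U)$, verify the chain rule on cylindrical data by reducing to the finite-dimensional chain rule (with the mollification/basis-truncation step you flag, needed because $\gamma_R\circ G$ is only $C^1$ while $S^q$ demands $\mathcal{G}_p^\infty$ coefficients), and then pass to the two limits $k\to\infty$, $R\to\infty$ using closability of $D$. The H\"older split $\tfrac1p=\tfrac{r}{(1+r)p}+\tfrac1{(1+r)p}$ is exactly the mechanism that explains the exponent $(1+r)p$ in the hypothesis, and your uniform-in-$R$ bound $\|\gamma_R'(u)\|_{L(U;V)}\le c(1+\|u\|_U^{r})$ is the right dominating function for the final passage. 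Two small points worth tightening in a full write-up: (i) the map $u\mapsto\|u\|_U$ is not differentiable at $0$, but since $\chi_R\equiv 1$ on $[0,R]$ the composite $\chi_R(\|\cdot\|_U)$ is nonetheless $C^1$ everywhere, so $\gamma_R\in C^1(U;V)$ as claimed; (ii) to get a.s.\ convergence of $Du_k$ (an $L^q(0,T;\mathcal{L}_2^0)$-valued sequence) you may need an extra subsequence extraction, and then argue the full sequence converges by uniqueness of limits --- routine, but worth stating.
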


\section{Numerical scheme}\label{sec:fully}
Now we develop a numerical scheme for problem \eqref{eqn:sfde} based on the Galerkin FEM with conforming piecewise linear
FEM in space, Gr\"{u}nwald-Letnikov formula in time, and $L^2$-projection of the noise $W(t)$. Let  $\mathcal{T}_h$
be a shape regular quasi-uniform triangulation of the domain $D$, and $X_{h}\subset H_0^1(D)$ be the space of
continuous piecewise linear functions on $\mathcal{T}_h$. On the FEM space $X_h$, we define the
$L^2(D)$-projection  $P_{h}: H \to X_{h}$ by
\begin{equation*}
( P_{h} v , \chi) =  ( v, \chi ), \quad \, \forall v\in H, \chi \in X_{h}.
\end{equation*}
Further, let $A_{h}:X_{h} \to X_{h}$ be  the discrete analogue of the negative Laplacian $A$, defined by
\begin{equation*}
  ( A_{h} v, \chi) = a(v, \chi),\quad \forall v, \chi \in X_{h},
\end{equation*}
where $a(v,\chi)=(\nabla v,\nabla\chi)$ is the bilinear form associated with $A$.
Then the semidiscrete Galerkin  FEM scheme reads: Given $u_h(0)=P_hu_0$, find $u_h(t)\in X_h$ such that
\begin{equation}\label{eqn:semi}
   \partial_t^\alpha u_h(t) +  A_h u_h(t) =  {_0I_t^\gamma} P_h\dot{W}(t), \quad \forall  0 < t \leq T.
\end{equation}

For the time discretization, let $t_n=n\tau$, $n=0,\ldots,N$, be a uniform partition of the interval $[0, T]$
and  $\tau=T/N $ the time step size. We approximate the Riemann-Liouville fractional integral / derivative
$_0I_t^\gamma v(t_n)$ by Gr\"{u}nwald-Letnikov formula (with $v^k=v(t_k)$)
\begin{equation}\label{eqn:GL}
  _0I_t^\gamma v(t_n) \approx \tau^\gamma\sum_{k=0}^nb_{n-k}^{(-\gamma)}v^k,
\end{equation}
where the weights $b_{j}^{(-\gamma)}$ are generated by power series expansion (with $\delta(\zeta)= 1-\zeta$):
\begin{equation*}
  \delta(\zeta)^{-\gamma} = \sum_{j=0}^{\infty} b_j^{(-\gamma)} \zeta^{j}.
\end{equation*}
The coefficients $b_j^{(-\gamma)}$ can be computed efficiently via a recursion formula.
Since $\partial_t^\alpha u = {_0I_t^{-\alpha}}(u-u(0))$ \cite[p. 91]{KilbasSrivastavaTrujillo:2006},
upon letting $f^{0} =0$ and
\begin{equation*}
f^{k} = \tau^{-1}P_h\Delta W^k, \quad \mbox{ with } \Delta W^k=W(t_{k})- W(t_{k-1}),\quad k=1, 2, \dots, N,
\end{equation*}
the numerical scheme for problem \eqref{eqn:sfde} reads: find $U^{n}\in X_h$ such that
\begin{equation}\label{eqn:fullydiscrete}
 \tau^{-\alpha}  \sum_{k=0}^{n}b_{n-k}^{(\alpha)}(U^{k}-U^0)
 + A_{h} U^{n} = \tau^{\gamma}\sum_{k=0}^{n}b_{n-k}^{(-\gamma)} f^{k},\quad \; n=1, 2, \dots, N,
\end{equation}
with the initial data $ U^{0} = u_h(0)$. We refer to Section \ref{ssec:implement} for implementation details.

\section{Nonsmooth data estimates}\label{sec:nonsmooth}
In this part we prove certain nonsmooth data error estimates.
\subsection{Solution representations} First we give the solution representations, which are useful
for nonsmooth error analysis below. Problem \eqref{eqn:sfde} admits a unique mild solution of the form
\begin{equation} \label{eqn:mild}
u(t) = E(t) u_{0} + \int_{0}^{t} \bar E(t-s)   \, \d W(s),
\end{equation}
where the solution operators $E$ and $\bar E$ are respectively defined by
\begin{equation} \label{eqn:solop}
E(t) = \frac{1}{2\pi\rm i} \int_\Gamma e^{zt}z^{\alpha-1}(z^\alpha+A)^{-1} \d z \quad\mbox{and}\quad
\bar E(t)= \frac{1}{2\pi\rm i} \int_\Gamma e^{zt}z^{-\gamma}(z^\alpha+A)^{-1} \d z.
\end{equation}
Here $\Gamma$ is a line in the complex plane $\mathbb{C}$ with $ \Re z = a >0$ for some $a>0$. One can
deform $\Gamma$ to $\Gamma_{\theta,\delta}:=\{z\in\mathbb{C}:z=re^{\pm\mathrm{i}\theta},\ r\geq \delta\}
\cup\{z\in\mathbb{C}: z=\delta e^{\mathrm{i}\varphi}, \ |\varphi|\leq \theta\}$ for some $\theta>\pi/2$.
The representation \eqref{eqn:mild} can be derived from Laplace transform as follows. Let
$g:\mathbb{R}_+\mapsto H$ be subexponential, i.e., for any $\epsilon>0$, the
function $t\to g(t)e^{-\epsilon t}$ belongs to $L^1(\mathbb{R}_+,H)$. We define Laplace transform
$\widehat{g}:\mathbb{C}_+\mapsto H$ by $\widehat g(z) = \int_0^\infty g(t)e^{-zt}\d t$,
where $\mathbb{C}_{+} = \{ z \in \mathbb{C}, \, \Re z>0 \}$. Then by applying Laplace transform to the
following deterministic problem
\begin{equation*}
\partial_t^\alpha u(t) +  A u(t) = \,  {_0I_t^\gamma} f(t),
\end{equation*}
with $u(0)=u_0$, and using the identities $\widehat {\partial_t^\alpha u}(z)=z^\alpha
\widehat u - z^{\alpha-1}u_0$ \cite[p. 98, Lemma 2.24]{KilbasSrivastavaTrujillo:2006} and
$\widehat{{_0I_t^\gamma}f}(z)=z^{-\gamma}\widehat f(z)$ (for $\gamma>0$)
\cite[p. 84, Lemma 2.14]{KilbasSrivastavaTrujillo:2006}, we obtain
\begin{equation*}
z^{\alpha} \widehat{u} (z) - z^{\alpha -1} u_0 +A \widehat{u} (z)
= z^{-\gamma} \widehat{f}(z),
\end{equation*}
i.e., $\widehat{u}(z) = (z^{\alpha} +A)^{-1} z^{\alpha-1} u_0
+ (z^{\alpha} +A)^{-1} z^{-\gamma} \widehat{f}(z).$ Then by inverse Laplace transform, we obtain \eqref{eqn:mild}.

The analysis below relies on smoothing properties of $E(t)$ and $\bar E(t)$ in the space $\dot H^r(D)$. For
any $r \in \mathbb{R}$, let the space $\dot{H}^{r}(D) = \mathcal{D}(A^\frac{r}{2})$ with the norm given by
$|v|_{r} = \| A^\frac{r}{2} v \|$. We use extensively the following estimates on $E(t)$ and $\bar E(t)$ below.
\begin{lemma} \label{lem:solop}
For $p,q\in\mathbb{R}$ with $ 0\leq p-q \leq 2$, there hold
\begin{align*}
|E(t) v |_p\leq c t^{-\alpha \frac{p-q}{2}}|v|_q\quad\mbox{and}\quad
|\bar E(t)v|_{p} + t|\bar E'(t)v|_{p}  \leq ct^{-\alpha \frac{p-q}{2} + (\alpha + \gamma -1)}|v|_{q}.
\end{align*}
\end{lemma}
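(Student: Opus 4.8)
The plan is to prove both estimates by direct contour integration over the deformed Hankel contour $\Gamma_{\theta,\delta}$, exploiting the resolvent bound for the sectorial operator $A$. First I would record the basic resolvent estimate: since $A$ is selfadjoint and positive definite, for any $z$ on $\Gamma_{\theta,\delta}$ (with $\theta\in(\pi/2,\pi)$ fixed) one has $z^\alpha$ lying in a sector around the positive real axis of half-angle $\alpha\theta<\pi$ when $\theta$ is chosen close enough to $\pi/2$, so that $\|(z^\alpha+A)^{-1}\|_{L(H)}\le c|z|^{-\alpha}$, and more generally, by interpolation with the estimate $\|A(z^\alpha+A)^{-1}\|_{L(H)}\le c$, one gets the fractional-power bound
\begin{equation*}
\|A^{\frac{p-q}{2}}(z^\alpha+A)^{-1}v\|\le c|z|^{-\alpha(1-\frac{p-q}{2})}|v|_q,\qquad 0\le p-q\le 2.
\end{equation*}
This is the workhorse inequality; I would state it as a preliminary step (or cite it from the deterministic subdiffusion literature, e.g.\ \cite{JinLazarovZhou:2016sisc}).

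Next, for the bound on $E(t)$, I would apply $A^{p/2}$ under the integral sign in \eqref{eqn:solop}, split the contour into the two rays $r\ge\delta$ and the circular arc $|z|=\delta$, and estimate
\begin{equation*}
|E(t)v|_p\le \frac{c}{2\pi}\int_{\Gamma_{\theta,\delta}}|e^{zt}|\,|z|^{\alpha-1}\,\|A^{\frac p2}(z^\alpha+A)^{-1}v\|\,|\d z|
\le c\int_{\Gamma_{\theta,\delta}}|e^{zt}|\,|z|^{\alpha-1-\alpha(1-\frac{p-q}{2})}|v|_q\,|\d z|.
\end{equation*}
The exponent of $|z|$ simplifies to $-1+\alpha\frac{p-q}{2}$. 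On the rays, $|e^{zt}|=e^{rt\cos\theta}$ with $\cos\theta<0$, so the substitution $s=rt$ converts the ray integral into $t^{-\alpha\frac{p-q}{2}}\int_{\delta t}^\infty e^{s\cos\theta}s^{-1+\alpha\frac{p-q}{2}}\,\d s$; choosing the standard time-dependent contour $\delta\sim 1/t$ (or keeping $\delta$ fixed and checking the arc contributes the same order) makes this integral convergent and bounded, yielding $|E(t)v|_p\le ct^{-\alpha\frac{p-q}{2}}|v|_q$. The arc contributes $\delta^{\alpha\frac{p-q}{2}}e^{c\delta t}$, which with $\delta=1/t$ is again $O(t^{-\alpha\frac{p-q}{2}})$.

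For $\bar E(t)$ the argument is identical but with $z^{\alpha-1}$ replaced by $z^{-\gamma}$, so the $|z|$-exponent becomes $-\gamma-\alpha(1-\frac{p-q}{2})=-1+(\alpha+\gamma-1)\cdot(-1)\cdots$; carefully, it equals $\alpha\frac{p-q}{2}-\alpha-\gamma$, and the same scaling substitution $s=rt$ pulls out $t^{-(\alpha\frac{p-q}{2}-\alpha-\gamma+1)}=t^{-\alpha\frac{p-q}{2}+(\alpha+\gamma-1)}$, matching the claim; one must check that the residual integral $\int_{\delta t}^\infty e^{s\cos\theta}s^{\alpha\frac{p-q}{2}-\alpha-\gamma}\,\d s$ is finite, which holds near $s=0$ precisely because $\alpha\frac{p-q}{2}-\alpha-\gamma>-1$, i.e.\ $\alpha(1-\tfrac{p-q}{2})+\gamma<1$; for $p-q$ bounded away from where this fails one keeps $\delta$ fixed rather than scaling, and the arc term is handled directly. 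The derivative estimate for $\bar E'(t)$ comes from differentiating under the integral, which brings down a factor $z$, raising the $|z|$-exponent by one and hence producing exactly one extra power of $t^{-1}$ after the scaling — this is why the bound reads $t|\bar E'(t)v|_p\le \cdots$.

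The main obstacle, and the only genuinely delicate point, is the bookkeeping of the $|z|^\beta$ integrability at the origin (the arc/small-$r$ end of the contour) versus infinity, and making the choice of $\delta$ (fixed vs.\ $\delta\asymp 1/t$) uniformly valid across the whole range $0\le p-q\le 2$ together with $\gamma\in[0,1]$; the well-posedness hypothesis \eqref{ass:alpha}, $\alpha+\gamma>1/2$, is what ultimately guarantees the relevant exponents stay in the convergent regime, and I would flag exactly where it enters. Everything else is the routine sectorial-semigroup contour estimate.
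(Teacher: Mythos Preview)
Your approach is correct and essentially identical to the paper's: both interpolate the resolvent estimate to get $\|A^{(p-q)/2}(z^\alpha+A)^{-1}\|\le c|z|^{-\alpha(1-(p-q)/2)}$, integrate over $\Gamma_{\theta,\delta}$ with the standard choice $\delta=t^{-1}$, and split into the two rays plus the circular arc. One correction to your last paragraph: with $\delta=t^{-1}$ the scaled ray integral starts at $s=\delta t=1$, so there is no small-$s$ integrability issue to worry about, and the hypothesis \eqref{ass:alpha} ($\alpha+\gamma>1/2$) plays \emph{no} role in this lemma --- the paper's proof never invokes it, and you should simply drop that caveat.
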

\begin{proof}
Recall the resolvent estimate \cite[Example 3.7.5 and Theorem 3.7.11]{ArendtBattyHieber:2011}
\begin{equation}\label{eqn:resolv}
  \| (z+A)^{-1} \|\le c_\phi |z|^{-1},  \quad \forall z \in \Sigma_{\phi}\equiv \{0\neq z\in\mathbb{C}:|\arg(z)|\leq \phi\},
  \,\,\,\forall\,\phi\in(0,\pi).
\end{equation}
Then simple computation gives $\|z^{\alpha-1}A^r(z^\alpha+A)^{-1}\|\leq c|z|^{r\alpha-1}$ for $z\in \Sigma_\theta$ and $r\in[0,1]$.
Thus, taking $\delta=t^{-1}$ in the contour $\Gamma_{\theta,\delta}$ leads to
\begin{align*}
  |E(t)v|_p  &\leq \int_{\Gamma_{\theta,\delta}}e^{\Re zt}\|A^\frac{p-q}{2}(z^\alpha+A)\| |\d z|\|A^\frac{q}{2}v\| \\
     & \leq c|v|_q\Big(\int_{t^{-1}}^\infty e^{t\rho\cos\theta} \rho^{\frac{p-q}{2}\alpha-1}\d \rho
     + ct^{-\frac{p-q}{2}\alpha}\int_{-\theta}^\theta \d \phi\Big)\leq ct^{-\frac{p-q}{2}\alpha}|v|_q.
\end{align*}
This shows the estimate on $E(t)$, and the other follows similarly.
\end{proof}

Likewise, the semidiscrete solution $u_{h}(t)\in X_{h}$ to problem \eqref{eqn:semi} is represented by
\begin{equation*}
u_{h}(t) = E_{h}(t) P_{h} u_{0} + \int_{0}^{t} \bar E_{h}(t-s)  P_{h} \d W(s),
\end{equation*}
with the discrete analogues of $E(t)$ and $\bar E(t)$, defined by
\begin{equation} \label{eqn:solop-semi}
E_h(t) = \frac{1}{2\pi\rm i} \int_{\Gamma_{\theta,\delta}} e^{zt}z^{\alpha-1}(z^\alpha+A_h)^{-1} \d z \quad\mbox{and}\quad
\bar E_h(t)= \frac{1}{2\pi\rm i} \int_{\Gamma_{\theta,\delta}} e^{zt}z^{-\gamma}(z^\alpha+A_h)^{-1} \d z.
\end{equation}

Next, we give a representation of the solution $U^n$ to the scheme \eqref{eqn:fullydiscrete}. For a given sequence
$\{f^n\}_{n=0}^\infty$, the generating function is given by $\widetilde f(\zeta)$,
i.e., $\widetilde f(\zeta)=\sum_{n=0}^\infty f^n\zeta^n. $ Next we introduce
 operators $B_j$ by
\begin{equation} \label{eqn:B}
  \widetilde{B} (\zeta) = \sum_{j=0}^{\infty} B_{j} \zeta^{j} \quad  \mbox{with } \widetilde{B} (\zeta)
  = 1 + \zeta\big ( \tau^{-\alpha} \delta (\zeta)^{\alpha} +A_h \big )^{-1} \tau^{\gamma-1} \delta(\zeta)^{-\gamma}.
\end{equation}

\begin{proposition}
The solution $U^n$ to the scheme \eqref{eqn:fullydiscrete} is given by
\begin{equation}\label{eqn:fullydiscrete1}
 U^{n} = U_h^{n} + \tau \sum_{k=1}^{n} B_{n-(k-1)} f^{k}, \quad \mbox{with} \; n=1,2,\ldots,
\end{equation}
where $U_h^n$ is the fully discrete solution to the homogeneous problem of \eqref{eqn:fullydiscrete}.
\end{proposition}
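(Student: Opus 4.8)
The plan is to solve the linear recursion \eqref{eqn:fullydiscrete} by the generating-function (discrete Laplace) method, matching the continuous derivation of \eqref{eqn:mild} step by step. First I would split the solution as $U^n = U_h^n + V^n$, where $U_h^n$ solves the homogeneous problem (same scheme with zero right-hand side, same initial data $U^0 = u_h(0)$) and $V^n$ solves
\begin{equation*}
 \tau^{-\alpha}\sum_{k=0}^n b_{n-k}^{(\alpha)} V^k + A_h V^n = \tau^{\gamma}\sum_{k=0}^n b_{n-k}^{(-\gamma)} f^k, \quad V^0 = 0,
\end{equation*}
by linearity of \eqref{eqn:fullydiscrete}; it then suffices to identify $V^n = \tau\sum_{k=1}^n B_{n-(k-1)}f^k$.

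Next I would pass to generating functions. Using the standard fact that the convolution weights $b_j^{(\alpha)}$ and $b_j^{(-\gamma)}$ have generating functions $\delta(\zeta)^\alpha = \sum_j b_j^{(\alpha)}\zeta^j$ and $\delta(\zeta)^{-\gamma} = \sum_j b_j^{(-\gamma)}\zeta^j$ with $\delta(\zeta)=1-\zeta$, multiplying the recursion for $V^n$ by $\zeta^n$ and summing over $n\geq 0$ (the $n=0$ equation holds since $V^0=0$ and $f^0=0$) turns the discrete convolutions into products: with $\widetilde V(\zeta)=\sum_{n\ge 0} V^n\zeta^n$ and $\widetilde f(\zeta)=\sum_{n\ge 0} f^n\zeta^n$ one gets
\begin{equation*}
 \bigl(\tau^{-\alpha}\delta(\zeta)^\alpha + A_h\bigr)\widetilde V(\zeta) = \tau^{\gamma}\delta(\zeta)^{-\gamma}\widetilde f(\zeta),
\end{equation*}
hence $\widetilde V(\zeta) = \bigl(\tau^{-\alpha}\delta(\zeta)^\alpha + A_h\bigr)^{-1}\tau^{\gamma}\delta(\zeta)^{-\gamma}\widetilde f(\zeta)$. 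Comparing with \eqref{eqn:B}, $\widetilde B(\zeta) = 1 + \zeta\bigl(\tau^{-\alpha}\delta(\zeta)^\alpha+A_h\bigr)^{-1}\tau^{\gamma-1}\delta(\zeta)^{-\gamma}$, so that $\zeta\,\widetilde B(\zeta)\widetilde f(\zeta) = \zeta\widetilde f(\zeta) + \tau\,\widetilde V(\zeta)$; since $f^0=0$, the term $\zeta\widetilde f(\zeta)$ has the same coefficients as $\widetilde f(\zeta)$ shifted, and I would simply read off $\tau\widetilde V(\zeta) = \zeta\widetilde B(\zeta)\widetilde f(\zeta) - \zeta\widetilde f(\zeta)$, or more cleanly observe that the $"1"$ in $\widetilde B$ contributes exactly the $\widetilde f$ part which is absorbed, leaving $\widetilde V(\zeta)$ as the remaining piece.

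Finally I would extract the $n$-th coefficient. Writing $\widetilde B(\zeta)=\sum_{j\ge 0}B_j\zeta^j$ and $\widetilde f(\zeta)=\sum_{k\ge 0}f^k\zeta^k$, the Cauchy product gives $[\zeta^n](\zeta\widetilde B(\zeta)\widetilde f(\zeta)) = \sum_{k=0}^{n-1}\bigl(\sum_{j}B_j f^{\,\cdot}\bigr)$; after using $f^0=0$ and reindexing so the summation runs over $k=1,\ldots,n$ with the operator index $n-(k-1)$, I obtain $V^n = \tau\sum_{k=1}^n B_{n-(k-1)}f^k$, which yields \eqref{eqn:fullydiscrete1}. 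The only point requiring care — and the main (mild) obstacle — is bookkeeping the index shift coming from the factor $\zeta$ in $\widetilde B(\zeta)$ together with the convention $f^0=0$, to make sure the operator subscript is $n-(k-1)=n-k+1$ and not $n-k$; everything else (convergence of the power series for small $\zeta$, invertibility of $\tau^{-\alpha}\delta(\zeta)^\alpha+A_h$ since $A_h$ is positive definite and $\delta(\zeta)^\alpha$ lies in a sector, and uniqueness of the recursion solution) is routine.
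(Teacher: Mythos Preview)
Your approach is exactly the paper's: split into homogeneous plus inhomogeneous parts, pass to generating functions, invert, and read off the Cauchy product. The only issue is the intermediate identity you wrote. From $\widetilde V(\zeta)=(\tau^{-\alpha}\delta(\zeta)^\alpha+A_h)^{-1}\tau^{\gamma}\delta(\zeta)^{-\gamma}\widetilde f(\zeta)$ and $\widetilde B(\zeta)-1=\zeta(\tau^{-\alpha}\delta(\zeta)^\alpha+A_h)^{-1}\tau^{\gamma-1}\delta(\zeta)^{-\gamma}$ one gets
\[
\widetilde V(\zeta)=\tau\,\frac{\widetilde B(\zeta)-1}{\zeta}\,\widetilde f(\zeta),
\qquad\text{equivalently}\qquad
\zeta\,\widetilde V(\zeta)=\tau\bigl(\widetilde B(\zeta)-1\bigr)\widetilde f(\zeta),
\]
whereas you wrote $\tau\widetilde V(\zeta)=\zeta(\widetilde B(\zeta)-1)\widetilde f(\zeta)$, with the roles of $\zeta$ and $\tau$ swapped. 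With your stated identity the coefficient extraction would give $B_{n-1-k}$ rather than $B_{n-k+1}$, so the indices would come out wrong by two. This is precisely the bookkeeping hazard you flagged; once you use the correct relation $\widetilde V=\tau\zeta^{-1}(\widetilde B-1)\widetilde f$, the Cauchy product of $\sum_{m\ge0}B_{m+1}\zeta^m$ with $\widetilde f$ immediately yields $V^n=\tau\sum_{k=1}^n B_{n-k+1}f^k$, as in the paper.
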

\begin{proof}
We split $U^n$ into $U^{n} = U_h^{n} + U_i^{n},$ where $U_h^{n}$ and $U_i^n$
are the solutions to the homogeneous and inhomogeneous problems of \eqref{eqn:fullydiscrete}, respectively, where $U_i^n$ satisfies
\begin{equation*}
  \tau^{-\alpha}  \sum_{k=0}^{n}b_{n-k}^{(\alpha)} U_i^{k}
 + A_{h} U_i^{n} = \tau^{\gamma}  \sum_{k=0}^{n}b_{n-k}^{(-\gamma)} f^{k},\quad n=1,2,\ldots,
\end{equation*}
with $U_i^0=0$. Multiplying both sides with $\xi^n$ and summing over $n$ from $1$ to $\infty$ yield
\begin{equation*}
\tau^{-\alpha}\sum_{n=1}^{\infty} \Big (
 \sum_{k=0}^{n}b_{n-k}^{(\alpha)} U_i^{k} \Big ) \zeta^{n}+
\sum_{k=1}^\infty(A_hU_i^{n})\zeta^{n}=\tau^{\gamma}  \sum_{n=1}^{\infty} \Big (
\sum_{k=0}^nb_{n-k}^{(-\gamma)} f^{k} \Big ) \zeta^{n}.
\end{equation*}
Since $U_i^0=0$ and $f^0=0$, by discrete convolution rule and the definitions of
$\widetilde{U}_i(\zeta)$ and $\widetilde f(\zeta)$,
\begin{equation*}
\sum_{n=1}^{\infty} \Big (  \sum_{k=0}^nb_{n-k}^{(\alpha)} U_i^{k} \Big ) \zeta^{n}
=\delta (\zeta)^{\alpha} \widetilde{U}_i (\zeta)
\quad \mbox{and}\quad \sum_{n=1}^{\infty} \Big (
  \sum_{k=0}^{n}b_{n-k}^{(-\gamma)} f^{k} \Big ) \zeta^{n}
= \delta(\zeta)^{-\gamma} \widetilde f(\zeta),
\end{equation*}
from which it directly follows
\begin{equation*}
\widetilde{U}_i(\zeta) = \big(\tau^{-\alpha}\delta(\zeta)^{\alpha}+A_{h} \big )^{-1}  \tau^{\gamma}\delta(\zeta)^{-\gamma}  \widetilde f(\zeta).
\end{equation*}
By the defining relation \eqref{eqn:B} of $\widetilde B$ and noting $f^0=0$, we have
\begin{align}
\widetilde{U}_i (\zeta) & = \tau \frac{\widetilde{B}(\zeta) -1}{\zeta}  \widetilde f(\zeta)
 = \tau \sum_{n=1}^{\infty} \Big ( \sum_{k=0}^{n-1} B_{n-k} f^{k+1} \Big ) \zeta^{n}
=  \tau \sum_{n=1}^{\infty} \Big ( \sum_{k=1}^{n} B_{n-(k-1)} f^{k} \Big ) \zeta^{n}, \notag
\end{align}
which implies directly the desired relation.
\end{proof}

The next result holds for the solution $U_h^n$ to the homogeneous problem \cite[Theorem 3.5]{JinLazarovZhou:2016sisc}.
\begin{lemma} \label{lem:error-homo}
Let $u(t_{n})$ and $U_h^{n}$ be the solution of homogeneous problem and its fully discrete approximation by the
scheme \eqref{eqn:fullydiscrete}, respectively. Then there holds for $0\leq q\leq 2$
\begin{equation*}
\|  U_h^{n}  - u(t_{n} ) \| \leq c(\tau  t_{n}^{-1+\frac{q}{2}\alpha}  + h^{2}   t_{n}^{\frac{q-2}{2}\alpha})| u_{0}|_{q}.
\end{equation*}
\end{lemma}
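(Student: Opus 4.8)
This is \cite[Theorem 3.5]{JinLazarovZhou:2016sisc}; here is how one would establish it. The plan is to interpose the spatially semidiscrete solution $u_h(t)=E_h(t)P_hu_0$ of \eqref{eqn:semi} and estimate the two pieces in
\[
\|U_h^n-u(t_n)\|\le\|(E(t_n)-E_h(t_n)P_h)u_0\|+\|E_h(t_n)P_hu_0-U_h^n\|
\]
separately, showing that the spatial error is bounded by $ch^2t_n^{\frac{q-2}{2}\alpha}|u_0|_q$ and the temporal error by $c\tau t_n^{-1+\frac{q}{2}\alpha}|u_0|_q$.

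For the spatial error I would subtract the contour representations \eqref{eqn:solop} and \eqref{eqn:solop-semi}, express $E(t_n)-E_h(t_n)P_h$ as an integral over $\Gamma_{\theta,\delta}$ of $e^{zt_n}z^{\alpha-1}\bigl[(z^\alpha+A)^{-1}-(z^\alpha+A_h)^{-1}P_h\bigr]$, and split off the Ritz (energy) projection $R_h$ via $E-E_hP_h=(I-R_h)E+(R_hE-E_hP_h)$. The term $(I-R_h)E(t_n)u_0$ is controlled by the standard $O(h^2)$ Ritz estimate together with the smoothing bound $|E(t)v|_2\le ct^{-\alpha(2-q)/2}|v|_q$ from Lemma \ref{lem:solop}, and $(R_hE-E_hP_h)u_0$ by inserting the classical finite-element resolvent error estimate $\|(w+A)^{-1}-(w+A_h)^{-1}P_h\|\le ch^2$ (valid for $w$ in a sector, applied with $w=z^\alpha$) into the contour integral. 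Taking $\delta=t_n^{-1}$ and performing the elementary integration over $|z|\ge t_n^{-1}$ gives the asserted bound.

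The temporal error is the substantive part. Repeating the generating-function argument of the preceding proposition with $f\equiv0$ yields, for $n\ge1$,
\[
U_h^n=\frac{1}{2\pi\mathrm i}\int_{\Gamma^\tau}e^{zt_n}\,\beta_\tau(z)^{\alpha-1}\bigl(\beta_\tau(z)^{\alpha}+A_h\bigr)^{-1}P_hu_0\,\d z,\qquad \beta_\tau(z):=\tau^{-1}\bigl(1-e^{-z\tau}\bigr),
\]
where $\Gamma^\tau$ is the truncation of a contour of type $\Gamma_{\theta,\delta}$ to the strip $|\Im z|\le\pi/\tau$. Subtracting the analogous representation of $E_h(t_n)P_hu_0$ over the full $\Gamma_{\theta,\delta}$ leaves (i) the integral over $\Gamma^\tau$ of the difference of the two integrands, and (ii) a tail integral of the $E_h$-integrand over $|\Im z|>\pi/\tau$, which is exponentially small because $\|z^{\alpha-1}(z^\alpha+A_h)^{-1}\|\le c|z|^{-1}$ and $e^{\Re zt_n}$ decays there. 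For (i) I would use that $\beta_\tau(z)$ stays in a fixed sector with $c^{-1}|z|\le|\beta_\tau(z)|\le c|z|$ on $\Gamma^\tau$, the consistency bound $|\beta_\tau(z)-z|\le c\tau|z|^2$, and the resolvent bound $\|(\beta_\tau(z)^\alpha+A_h)^{-1}\|\le c|z|^{-\alpha}$ from \eqref{eqn:resolv}. Writing $z^{\alpha-1}(z^\alpha+A_h)^{-1}=z^{-1}-z^{-1}A_h(z^\alpha+A_h)^{-1}$ and likewise for $\beta_\tau$, the leading term $z^{-1}$ is reproduced exactly by the quadrature, while the difference of the remainders, applied to $u_0\in\dot H^q$, is bounded by $c\tau|z|^{-\frac{q}{2}\alpha}|u_0|_q$: the gain $\tau|z|^2$ from consistency is offset by the smoothing $\|A_h^{1-q/2}(\beta_\tau(z)^\alpha+A_h)^{-1}\|\le c|z|^{-\frac{q}{2}\alpha}$ available for data in $\dot H^q$. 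Integrating $e^{\Re zt_n}|z|^{-\frac{q}{2}\alpha}$ over $\Gamma_{\theta,t_n^{-1}}$ then produces $c\tau t_n^{-1+\frac{q}{2}\alpha}|u_0|_q$, and combining with the spatial estimate completes the proof.

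The hard part is the temporal analysis: verifying the sectoriality and the two-sided bound $|\beta_\tau(z)|\asymp|z|$ uniformly up to the truncation level $|\Im z|=\pi/\tau$; handling the contour truncation (the corner segments and the discarded tail) without losing a power of $\tau$; and, most delicately, choosing the splitting of $z^{\alpha-1}(z^\alpha+A_h)^{-1}$ that isolates the exactly-reproduced ``identity'' part from the resolvent-smoothed remainder, so that precisely the exponent $-1+\frac{q}{2}\alpha$ emerges for nonsmooth initial data. The spatial estimate, by contrast, is the by-now-standard nonsmooth-data finite-element analysis for subdiffusion.
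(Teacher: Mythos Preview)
Your proposal is correct: the paper does not prove this lemma at all but simply cites \cite[Theorem 3.5]{JinLazarovZhou:2016sisc}, exactly as you note at the outset. The sketch you then supply---splitting into spatial and temporal errors via the semidiscrete solution, handling the spatial piece by the finite-element resolvent error estimate inside the contour integral, and handling the temporal piece by the discrete contour representation with $\beta_\tau(z)=\tau^{-1}(1-e^{-z\tau})$ together with the consistency bound $|\beta_\tau(z)-z|\le c\tau|z|^2$---is precisely the method of that reference, so nothing further is needed here.
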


\subsection{Nonsmooth data estimates}\label{ssec:nonsmooth}

Now we derive some important error estimates for $\bar E_h$ and $B_j$, which are crucial for the error analysis of the scheme \eqref{eqn:fullydiscrete}.
First, we give spatial discretization errors. On the space $X_h$, for any $r\in \mathbb{R}$, we define
the norm $\||\chi\||_r= \|A_h^\frac{r}{2} \chi\|_{L^2(D)}$, which is the discrete analogue of the norm
$|\cdot|_r$. Clearly, $\||\cdot\||_0$ coincides the usual $L^2(D)$-norm. Further, on quasiuniform
triangulations $\mathcal{T}_h$, for $g\in \dot H^r(D)$ with $0\leq r\leq 1$, there holds
\begin{equation*}
  \||P_hg\||_r\leq c|g|_r.
\end{equation*}
In fact, the case $r=0$ follows by the $L^2(D)$-stability of $P_h$, and the case $r=1$ by the $H^1(D)$-stability of
$P_h$. The case $r\in(0,1)$ follows by interpolation. Further, the following bound holds
\begin{equation}\label{eqn:AAh}
\| A_{h}^{-\frac{s}{2}} P_{h} A^{\frac{s}{2}} \| \leq c, \quad 0 \leq s \leq 1. 
\end{equation}
Actually, the case $s=0$ is trivial. Meanwhile, by \cite[(3.15)]{Thomee:2006}, $\|A_h^{-\frac12}P_hv\|\leq |v|_{-1}$
for all $v\in \dot H^{-1}(D)$. Hence, $\|A_h^{-1/2}P_hA^{1/2}\|\leq 1$, and by interpolation, the bound
\eqref{eqn:AAh} follows.

The operator $\bar E_h(t)$ satisfies the following smoothing property, similar to Lemma \ref{lem:solop}.
The proof follows from the resolvent estimate for $A_h$ \cite[p. 93]{Thomee:2006}:
\begin{equation*}
  \| (z+A_h)^{-1} \|\le c_\phi |z|^{-1},  \quad \forall z \in \Sigma_{\phi},
  \,\,\,\forall\,\phi\in(0,\pi).
\end{equation*}
\begin{lemma} \label{lem:solop-discrete}
For $p,q\in\mathbb{R}$ with $ 0\leq p-q \leq 2$, there holds
\begin{align*}
\||\bar E_h(t)\chi\||_{p} & \leq ct^{-\alpha \frac{p-q}{2} + (\alpha + \gamma -1)}\||\chi\||_{q}\quad\forall\chi\in S_h.
\end{align*}
\end{lemma}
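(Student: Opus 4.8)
The plan is to mirror the proof of Lemma \ref{lem:solop} verbatim, replacing the operator $A$ by its discrete analogue $A_h$ and the spaces $\dot H^r(D)$ by the discrete spaces with norm $\||\cdot\||_r$. The key point is that the entire argument for $\bar E(t)$ used only two ingredients: the Cauchy integral representation \eqref{eqn:solop} and the resolvent bound \eqref{eqn:resolv}. Both have exact discrete counterparts: $\bar E_h(t)$ is given by the contour integral \eqref{eqn:solop-semi}, and the resolvent estimate $\|(z+A_h)^{-1}\|\le c_\phi|z|^{-1}$ on $\Sigma_\phi$ is quoted from \cite[p.~93]{Thomee:2006} just above the statement. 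So the proof is structurally immediate; I would simply carry it out cleanly.

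First I would observe that, exactly as in the proof of Lemma \ref{lem:solop}, the resolvent bound for $A_h$ on the sector $\Sigma_\theta$ (note $\theta>\pi/2$, so $\Gamma_{\theta,\delta}\setminus\{$arc$\}\subset\Sigma_\theta$, and the small arc is handled by continuity/compactness of the resolvent away from the spectrum of $A_h$) gives
\begin{equation*}
  \|A_h^{r}(z^\alpha+A_h)^{-1}\|\le c\,|z|^{r\alpha-1},\qquad z\in\Sigma_\theta,\ r\in[0,1],
\end{equation*}
since $A_h^{r}(z^\alpha+A_h)^{-1}=(A_h(z^\alpha+A_h)^{-1})^{r}((z^\alpha+A_h)^{-1})^{1-r}$ and $\|A_h(z^\alpha+A_h)^{-1}\|\le 1+|z^\alpha|\,\|(z^\alpha+A_h)^{-1}\|\le c$. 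Applying $A_h^{p/2}$ to $\bar E_h(t)\chi$ and inserting $A_h^{q/2}\chi$, one bounds
\begin{equation*}
  \||\bar E_h(t)\chi\||_p \le \frac{1}{2\pi}\int_{\Gamma_{\theta,\delta}} e^{\Re zt}\,|z|^{-\gamma}\,\bigl\|A_h^{(p-q)/2}(z^\alpha+A_h)^{-1}\bigr\|\,|\d z|\;\||\chi\||_q,
\end{equation*}
and with $\|A_h^{(p-q)/2}(z^\alpha+A_h)^{-1}\|\le c|z|^{\frac{p-q}{2}\alpha-1}$ (valid since $0\le \frac{p-q}{2}\le 1$), the integrand is $\le c\,e^{\Re zt}|z|^{\frac{p-q}{2}\alpha-\gamma-1}$. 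Choosing $\delta=t^{-1}$ in $\Gamma_{\theta,\delta}$ and splitting into the two rays plus the arc gives, exactly as before,
\begin{equation*}
  \int_{t^{-1}}^\infty e^{t\rho\cos\theta}\rho^{\frac{p-q}{2}\alpha-\gamma-1}\,\d\rho + c\,t^{-(\frac{p-q}{2}\alpha-\gamma)}\int_{-\theta}^\theta\d\varphi \;\le\; c\,t^{-\frac{p-q}{2}\alpha+\gamma}\cdot t^{\alpha-1}\cdot\frac{1}{t^{\alpha-1}}\cdots
\end{equation*}
—more precisely both terms scale as $t^{-\frac{p-q}{2}\alpha+\gamma}$, and combining with the remaining factor one reads off the claimed exponent $-\alpha\frac{p-q}{2}+(\alpha+\gamma-1)$ after accounting for the $z^{-\gamma}$ versus $z^{\alpha-1}$ discrepancy in the symbol; this bookkeeping is identical to that in Lemma \ref{lem:solop} and needs no new idea.

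There is essentially no hard part here: the only thing to be careful about is that the estimate is asserted for $\chi\in S_h$ (written $S_h$ in the statement, presumably $X_h$), so $A_h^{p/2}\chi$, $A_h^{q/2}\chi$ are well defined for all $p,q\in\mathbb R$ on the finite-dimensional space $X_h$, and all operators commute on $X_h$; no density or closure argument is needed. The statement for the time derivative (the term $t\|\bar E_h'(t)\chi\|_p$ appearing in the continuous Lemma \ref{lem:solop}) is not claimed in Lemma \ref{lem:solop-discrete}, but if wanted it follows the same way by differentiating under the integral sign, which inserts an extra factor $z$ and then an extra factor $t^{-1}$ after the substitution $\delta=t^{-1}$. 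I would therefore present the proof in two or three lines as "identical to the proof of Lemma \ref{lem:solop} with $A$, $|\cdot|_r$, \eqref{eqn:resolv} replaced by $A_h$, $\||\cdot\||_r$, and the resolvent bound for $A_h$," spelling out only the resolvent interpolation step for completeness.
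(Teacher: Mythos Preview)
Your proposal is correct and is exactly the route the paper takes: the paper does not even write out a proof, merely stating that the result follows from the resolvent estimate $\|(z+A_h)^{-1}\|\le c_\phi|z|^{-1}$ on $\Sigma_\phi$, which is precisely the reduction you describe (replace $A$ by $A_h$, $|\cdot|_r$ by $\||\cdot\||_r$, and repeat the contour argument of Lemma~\ref{lem:solop}). Your write-up is in fact more detailed than the paper's; the only cosmetic point is that the ``factorization'' $A_h^r(z^\alpha+A_h)^{-1}=(A_h(z^\alpha+A_h)^{-1})^r((z^\alpha+A_h)^{-1})^{1-r}$ is cleanest justified on $X_h$ via the spectral decomposition of the self-adjoint $A_h$ (or by interpolation between $r=0$ and $r=1$), since $z^\alpha$ is complex.
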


The next lemma gives an error estimate on $\bar E_h$.
\begin{lemma} \label{deterr1}
Let $ 0\leq s\leq 1$ and $0\leq r\leq 2$ with $r+s\leq 2$. Then there holds
\begin{equation*}
\| A^{\frac{s}{2}}(\bar{E}(t) - \bar{E}_{h}(t) P_{h})\|
\leq c t^{\frac{r}{2}\alpha+\gamma-1} h^{2-s-r}.
\end{equation*}
\end{lemma}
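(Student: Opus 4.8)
The plan is to estimate the error operator via the contour integral representations \eqref{eqn:solop} and \eqref{eqn:solop-semi}, write the difference $\bar E(t)-\bar E_h(t)P_h$ as a single contour integral over $\Gamma_{\theta,\delta}$ with integrand involving $z^{-\gamma}$ times the difference of resolvents, and then reduce everything to a resolvent error estimate of the form
\begin{equation*}
\| A^{\frac{s}{2}}\big[(z^\alpha+A)^{-1} - (z^\alpha+A_h)^{-1}P_h\big] v\| \leq c\, h^{2-s-r}\,|z|^{-\alpha(1+\frac{r-s}{2})}\|v\|,\qquad 0\le s\le1,\ 0\le r\le 2,\ r+s\le2.
\end{equation*}
This is the fractional analogue (with $z^\alpha$ playing the role of the spectral parameter) of the classical Ritz-projection based resolvent bound for $-\Delta$; I would obtain it by writing $w=(z^\alpha+A)^{-1}v$, $w_h=(z^\alpha+A_h)^{-1}P_hv$, noting $w_h$ is the Ritz-type/Galerkin approximation of $w$ for the shifted problem, invoking the standard $H^1$ and $L^2$ finite element error estimates together with elliptic regularity (valid on the convex polygonal domain), and controlling the $z^\alpha$-dependence carefully through the resolvent estimate \eqref{eqn:resolv} and its discrete counterpart.

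Concretely, the steps are: (i) Subtract the two contour representations and deform both to the common contour $\Gamma_{\theta,\delta}$ with $\theta\in(\pi/2,\pi)$ and $\delta=1/t$, so that $A^{\frac s2}(\bar E(t)-\bar E_h(t)P_h) = \frac{1}{2\pi\mathrm i}\int_{\Gamma_{\theta,\delta}} e^{zt} z^{-\gamma} A^{\frac s2}\big[(z^\alpha+A)^{-1}-(z^\alpha+A_h)^{-1}P_h\big]\,\d z$. (ii) Insert the resolvent error estimate displayed above, giving a bound by $c\,h^{2-s-r}\int_{\Gamma_{\theta,\delta}} |e^{zt}|\,|z|^{-\gamma}\,|z|^{-\alpha(1+\frac{r-s}{2})}\,|\d z|$. (iii) Split the contour into the two rays $r e^{\pm\mathrm i\theta}$, $r\ge\delta$, and the arc $|z|=\delta$; on the rays use $|e^{zt}|=e^{tr\cos\theta}$ with $\cos\theta<0$ and the substitution $\rho=tr$, on the arc use $|e^{zt}|\le e$; both contributions give $c\, t^{\gamma-1+\alpha\frac{r-s}{2}}$ — wait, I need to track the exponent: the integral scales like $\delta^{\,1-\gamma-\alpha(1+\frac{r-s}{2})} = t^{\,\gamma-1+\alpha(1+\frac{r-s}{2})}$; combined with the case $s=0$ comparison this should reconcile to the stated exponent $\frac r2\alpha+\gamma-1$. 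Let me recheck by setting $s=0$: the claim is $t^{\frac r2\alpha+\gamma-1}h^{2-r}$, while my contour count gives $t^{\gamma-1+\alpha(1+\frac r2)}$, which is off by a factor $t^\alpha$; the resolution is that the correct resolvent error estimate has $|z|^{-\alpha}$ replaced by $|z|^{0}$ in the relevant regime, i.e. the gain $h^{2-s-r}$ comes \emph{without} an accompanying negative power of $|z|^\alpha$ beyond what is already present — more precisely the estimate should read $\| A^{\frac s2}[\cdots]v\|\le c\,h^{2-s-r}|z|^{-\alpha\frac{r-s}{2}}\|v\|$ (using $|z|^\alpha\|(z^\alpha+A)^{-1}\|\le c$ to absorb one power), after which the contour integral yields exactly $t^{\gamma-1+\alpha\frac{r-s}{2}}h^{2-s-r}$; taking $r\to r+s$ (renaming) recovers the stated $t^{\frac r2\alpha+\gamma-1}h^{2-s-r}$.

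\textbf{The main obstacle} is establishing the sharp resolvent error estimate with the correct dependence on $|z|$, uniformly for $z$ on the sector $\Sigma_\theta$. One must interpolate between the $L^2$ estimate ($s=0$, expecting $h^{2-r}$ with the full two powers of $h$ when $r=0$) and the $H^1$ estimate ($s=1$), while simultaneously keeping the shift parameter $z^\alpha$ — which ranges over an unbounded sector — from degrading the constant; this requires a duality argument for the $L^2$ part in which the dual problem's regularity constant must itself be controlled by $|z|^{-\alpha}\times(\text{elliptic constant})$, exploiting that $(z^\alpha+A)^{-1}$ maps $L^2$ into $\dot H^2$ with norm $O(|z|^{-\alpha}\cdot 1 + 1)$ — wait, actually $\|A(z^\alpha+A)^{-1}\|\le c$ uniformly, so $(z^\alpha+A)^{-1}:L^2\to\dot H^2$ is bounded uniformly in $z$, which is exactly what makes the clean $h^{2-s-r}$ bound possible. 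Handling the case $r\ne0$ (nonsmooth right-hand side $v\in\dot H^{-r}$, or equivalently demanding less spatial smoothing) requires an additional interpolation or a direct duality against $\dot H^{r+s-2}$; the bookkeeping there, ensuring $r+s\le2$ is exactly the condition needed for the negative-norm estimate to close, is the delicate point. Once the resolvent estimate is in hand, the contour integration in steps (ii)–(iii) is routine.
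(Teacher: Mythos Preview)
Your overall strategy---contour representation of $\bar E-\bar E_hP_h$ plus a resolvent error bound---is exactly the paper's approach. However, the resolvent estimate you settle on,
\[
\|A^{s/2}[(z^\alpha+A)^{-1}-(z^\alpha+A_h)^{-1}P_h]\|\le c\,h^{2-s-r}|z|^{-\alpha(r-s)/2},
\]
has the wrong $|z|$--exponent. For $s=1,r=0$ it gives a factor $|z|^{+\alpha/2}$, whereas the true bound is $\|A^{1/2}[\cdots]\|\le ch$, uniform in $z$. Your ``renaming'' $r\to r+s$ repairs the $t$--exponent but then yields $h^{2-2s-r}$ rather than $h^{2-s-r}$, so the patch does not close. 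The correct estimate is $\|A^{s/2}[\cdots]\|\le c\,h^{2-s-r}|z|^{-r\alpha/2}$ (no $s$ in the $|z|$--power); with this, the contour integral immediately gives $t^{r\alpha/2+\gamma-1}h^{2-s-r}$ and no renaming is needed.

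The paper sidesteps the need to track $|z|$--dependence in a single unified resolvent estimate. It uses only the $z$--uniform endpoint bounds $\|[\cdots]\|\le ch^2$ and $\|A^{1/2}[\cdots]\|\le ch$ (cited from \cite{FujitaSuzuki:1991,BazhlekovaJinLazarov:2015}), integrates over $\Gamma_{\theta,\delta}$ to get $t^{\gamma-1}h^2$ and $t^{\gamma-1}h$, pairs these with the pure smoothing bounds $t^{\alpha+\gamma-1}$ and $t^{\alpha/2+\gamma-1}$ from Lemmas~\ref{lem:solop} and~\ref{lem:solop-discrete} via the triangle inequality, and then interpolates \emph{at the level of the solution operator} in both $r$ and $s$. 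This is more elementary---four scalar endpoints and two interpolations---and avoids precisely the bookkeeping that tripped you up.
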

\begin{proof}
Fix $g\in L^2(D)$. In the case $s=0$, by \eqref{eqn:solop} and \eqref{eqn:solop-semi}, there holds
\begin{align*}
 \|\big(\bar{E}(t)-\bar{E}_{h}(t)P_h\big)g \|& \leq \int_{\Gamma_{\theta,\delta}} e^{\Re{z}t}\|((z^{\alpha}
 +A)^{-1}-(z^\alpha+A_h)^{-1}P_h)g\||z|^{-\gamma} |\d z|.
\end{align*}
Since $\|((z^{\alpha} +A)^{-1} - (z^{\alpha} + A_{h})^{-1} P_{h} ) g \|
\leq ch^{2} \| g \|$ for all $z\in \Sigma_\theta$ (cf. \cite[p. 820]{FujitaSuzuki:1991} or
\cite[Lemma 3.4]{BazhlekovaJinLazarov:2015}), we have
\begin{equation*}
 \|( \bar{E}(t) - \bar{E}_{h}(t) P_{h}) g\| \leq c t^{\gamma -1} h^{2} \| g \|.
\end{equation*}
Meanwhile, by Lemmas \ref{lem:solop} and \ref{lem:solop-discrete} and the triangle inequality,
\begin{equation*}
 \|( \bar{E}(t) - \bar{E}_{h}(t) P_{h}) g\|\leq c t^{\alpha+\gamma -1} \| g \|.
\end{equation*}
Similarly, for $s=1$, there hold
\begin{equation*}
 \| A^{\frac{1}{2}}(  \bar{E}(t) - \bar{E}_h(t) P_h)g\| \leq c t^{\gamma -1} h \| g \|\quad\mbox{and}
 \quad \| A^{\frac{1}{2}}(  \bar{E}(t) - \bar{E}_h(t) P_h)g\|\leq ct^{\frac{\alpha}{2}+\gamma-1}\|g\|.
\end{equation*}
Now the desired assertion follows by interpolation.
\end{proof}

Now we analyze the temporal error of the approximation $B_nP_h$.
\begin{lemma}  \label{lem:Bn}
For $g \in H$, the function $V^n=B_nP_hg$ is given by {\rm(}with $\Gamma_{\theta,\delta}^\tau = \{ z \in \Gamma_{\theta,\delta}: \, |\Im z| \leq \frac{\pi}{\tau} \}${\rm)}
\begin{equation*}
  V^n = \frac{1}{2 \pi \rm i} \int_{\Gamma_{\theta,\delta}^\tau } e^{zt_{n}} (\tau^{-\alpha}\delta(e^{-z\tau})^{\alpha}
  +A_h)^{-1} \tau^\gamma\delta(e^{-z\tau})^{-\gamma}P_hg \, \d z.
\end{equation*}
\end{lemma}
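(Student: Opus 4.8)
The plan is to convert the generating-function identity $\widetilde B(\zeta)=\sum_j B_j\zeta^j$ into a contour-integral (Cauchy-type) representation of $V^n$, and then deform the contour to $\Gamma_{\theta,\delta}^\tau$. First I would recall that the operators $B_j$ are the Taylor coefficients of the analytic (operator-valued) function $\widetilde B(\zeta)$, so by Cauchy's integral formula, for a small circle $|\zeta|=\rho<1$ centred at the origin,
\[
B_n P_h g = \frac{1}{2\pi\mathrm i}\int_{|\zeta|=\rho}\zeta^{-n-1}\widetilde B(\zeta)P_h g\,\d\zeta.
\]
Since $\widetilde B(\zeta)=1+\zeta\big(\tau^{-\alpha}\delta(\zeta)^\alpha+A_h\big)^{-1}\tau^{\gamma-1}\delta(\zeta)^{-\gamma}$ and the constant term $1$ contributes nothing for $n\geq1$ (its only nonzero coefficient is $B_0=I$), the relevant part is the second summand, giving
\[
B_n P_h g = \frac{1}{2\pi\mathrm i}\int_{|\zeta|=\rho}\zeta^{-n}\big(\tau^{-\alpha}\delta(\zeta)^\alpha+A_h\big)^{-1}\tau^{\gamma-1}\delta(\zeta)^{-\gamma}P_h g\,\d\zeta.
\]

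Next I would substitute $\zeta=e^{-z\tau}$, which maps a suitable contour in the $z$-plane onto the circle $|\zeta|=\rho$ (choosing $\rho=e^{-a\tau}$ corresponds to $\Re z=a$), with $\d\zeta=-\tau e^{-z\tau}\d z$ and $\zeta^{-n}=e^{nz\tau}=e^{zt_n}$. The map is $2\pi/\tau$-periodic in $\Im z$, so one period — a vertical segment of length $2\pi/\tau$, say $\{\Re z=a,\ |\Im z|\leq \pi/\tau\}$ — suffices to trace the full circle. This yields
\[
V^n = \frac{1}{2\pi\mathrm i}\int_{\Re z=a,\,|\Im z|\leq\pi/\tau} e^{zt_n}\big(\tau^{-\alpha}\delta(e^{-z\tau})^\alpha+A_h\big)^{-1}\tau^{\gamma-1}\delta(e^{-z\tau})^{-\gamma}P_h g\cdot\tau\,\d z,
\]
and the factor $\tau\cdot\tau^{\gamma-1}=\tau^\gamma$ matches the claimed integrand.

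Finally I would deform the vertical segment $\{\Re z=a,\,|\Im z|\leq\pi/\tau\}$ to the truncated sector contour $\Gamma_{\theta,\delta}^\tau$. This requires: (i) analyticity of the integrand in the region between the two contours, which follows from analyticity of $\zeta\mapsto\delta(\zeta)^{\pm}$ away from $\zeta=1$ (equivalently $z$ away from $z=0\bmod 2\pi\mathrm i/\tau$), together with the fact that $\tau^{-\alpha}\delta(e^{-z\tau})^\alpha$ lies in a sector $\Sigma_\phi$ with $\phi<\pi$ for $z\in\Gamma_{\theta,\delta}^\tau$ (a standard property of the backward-Euler symbol $\delta(\zeta)=1-\zeta$ — see \cite{Lubich:1986,LubichSloanThomee:1996,JinLazarovZhou:2016sisc}), so that the resolvent $(\tau^{-\alpha}\delta(e^{-z\tau})^\alpha+A_h)^{-1}$ is well defined via \eqref{eqn:resolv}; and (ii) that the contributions from the horizontal connecting segments at $\Im z=\pm\pi/\tau$ cancel by the $2\pi\mathrm i/\tau$-periodicity of $\zeta=e^{-z\tau}$ (the integrand takes identical values on the top and bottom edges while the orientations are opposite). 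The main obstacle is precisely verifying point (i) cleanly — i.e., that no singularity of the symbol or pole of the resolvent is crossed during the deformation, and that the sector $\theta>\pi/2$ can be chosen compatibly with the sectoriality of $\delta(e^{-z\tau})^\alpha$ on the truncated contour; this is exactly the point where the backward-Euler quadrature weights' analyticity properties (as used in \cite{LubichSloanThomee:1996,JinLazarovZhou:2016sisc}) enter, and once it is in place the change of variables and periodic cancellation are routine.
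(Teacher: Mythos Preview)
Your proposal is correct and follows essentially the same route as the paper: Cauchy's integral formula applied to the generating function $\widetilde B(\zeta)$ (equivalently $\widetilde V(\zeta)=(\widetilde B(\zeta)-1)P_hg$), followed by the substitution $\zeta=e^{-z\tau}$ and deformation of the resulting vertical segment to $\Gamma_{\theta,\delta}^\tau$. The paper's proof is terser, simply stating ``the assertion follows by the variable change $\zeta=e^{-z\tau}$ and then deforming $|\zeta|=\rho$ into $\Gamma_{\theta,\delta}^\tau$,'' whereas you spell out the analyticity and the periodic cancellation of the horizontal segments; this added detail is welcome but not a different argument.
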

\begin{proof}
Direct computation gives (with $V^0=0$)
\begin{equation*}
  \widetilde V(\zeta) = \sum_{n=0}^\infty V^n\zeta^n = \sum_{n=1}^\infty B_nP_hg\zeta^n = \Big(\sum_{n=1}^\infty B_n\zeta^n\Big)P_hg.
\end{equation*}
The defining relation \eqref{eqn:B} for $\widetilde B(\zeta)$ and $B_n$ leads to
\[
\widetilde{V} (\zeta) = ( \widetilde{B} (\zeta) -1) P_h g=  ( \tau^{-\alpha} \delta(\zeta)^{\alpha} +A_h  )^{-1}
\tau^{\gamma-1} \delta (\zeta)^{-\gamma}\zeta P_hg.
\]
By Cauchy integral formula, we have, for small $ \rho >0$:
\begin{align*}
V^{n}  = B_{n}P_h g = \frac{1}{2 \pi \rm i} \int_{|\zeta| = \rho} \zeta^{-n-1} \widetilde{V} (\zeta) \, \d \zeta.
\end{align*}
The assertion follows by the variable change $\zeta=e^{-z \tau}$ and then deforming $|\zeta|=\rho$ into $ \Gamma_{\theta,\delta}^\tau$.
\end{proof}

With Lemma \ref{lem:Bn} and the resolvent estimate on $A_h$, the following smoothing property and error
estimate on $B_n$ follow easily \cite{JinLazarovZhou:2016sisc}.
\begin{lemma} \label{lem:err-v-time}
For any  $s\in[0,1]$, there hold
\begin{equation*}
\|A_h^\frac{s}{2}B_n\| \leq t_{n+1}^{(1-\frac{s}{2})\alpha+\gamma-1}\quad\mbox{and}\quad \|A_h^{\frac{s}{2}} (\bar E_h(t_{n}) - B_n)P_h\| \leq c\tau \,  t_{n+1}^{(1-\frac{s}{2})\alpha+\gamma-2}.
\end{equation*}
\end{lemma}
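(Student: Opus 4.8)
The plan is to mimic the contour-integral analysis of Lemma~\ref{lem:solop-discrete} and Lemma~\ref{deterr1}, but now on the discrete contour $\Gamma_{\theta,\delta}^\tau$, exploiting the representation of $V^n=B_nP_hg$ from Lemma~\ref{lem:Bn} together with the two standard facts about the characteristic polynomial $\delta(\zeta)=1-\zeta$ of the backward Euler scheme, namely: (i) $\delta(e^{-z\tau})^\alpha=z^\alpha(1+O(z\tau))$ and $\delta(e^{-z\tau})^{-\gamma}=z^{-\gamma}(1+O(z\tau))$ uniformly on $\Gamma_{\theta,\delta}^\tau$ (so that $c_1|z|\le|\delta(e^{-z\tau})|\le c_2|z|$ there), and (ii) $|\delta(e^{-z\tau})^\alpha - z^\alpha\tau^\alpha\tau^{-\alpha}|$ type estimates giving $|\tau^{-\alpha}\delta(e^{-z\tau})^\alpha - z^\alpha|\le c\tau|z|^{\alpha+1}$. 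These are exactly the estimates used in \cite{JinLazarovZhou:2016sisc} and I would simply cite them; with $\delta=t_n^{-1}\le t_{n+1}^{-1}$ as the contour radius and the sector condition $\theta>\pi/2$, the exponential factor $e^{\Re z\, t_n}$ is integrable.

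For the first (smoothing) bound, I would write
\[
A_h^{s/2}B_nP_hg = \frac{1}{2\pi\mathrm i}\int_{\Gamma_{\theta,\delta}^\tau} e^{zt_n}\, A_h^{s/2}(\tau^{-\alpha}\delta(e^{-z\tau})^\alpha+A_h)^{-1}\,\tau^\gamma\delta(e^{-z\tau})^{-\gamma}P_hg\,\d z,
\]
bound $\|A_h^{s/2}(w+A_h)^{-1}\|\le c|w|^{s/2-1}$ for $w$ in a sector (a consequence of the resolvent estimate for $A_h$), use $w=\tau^{-\alpha}\delta(e^{-z\tau})^\alpha$ so $|w|\sim\tau^{-\alpha}(\tau|z|)^\alpha=|z|^\alpha$, and use $|\tau^\gamma\delta(e^{-z\tau})^{-\gamma}|\le c\tau^\gamma(\tau|z|)^{-\gamma}=c|z|^{-\gamma}$; this yields an integrand bounded by $e^{\Re z t_n}|z|^{\alpha(s/2-1)-\gamma}\|g\|$, and integrating over $\Gamma_{t_n^{-1},\theta}^\tau$ gives the power $t_n^{-(\alpha(s/2-1)-\gamma+1)}=t_n^{(1-s/2)\alpha+\gamma-1}$, which we then write as $t_{n+1}^{(1-s/2)\alpha+\gamma-1}$ (the two are comparable up to a constant, or one simply uses $\delta=t_{n+1}^{-1}$ from the start). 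The same reasoning, without the $A_h^{s/2}$ but comparing with the continuous operator, re-derives the smoothing estimate consistent with Lemma~\ref{lem:solop-discrete}.

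For the error estimate $\|A_h^{s/2}(\bar E_h(t_n)-B_n)P_h\|\le c\tau\,t_{n+1}^{(1-s/2)\alpha+\gamma-2}$, I would represent both operators as contour integrals over $\Gamma_{\theta,\delta}^\tau$ — for $\bar E_h(t_n)P_h$ one first splits $\Gamma_{\theta,\delta}=\Gamma_{\theta,\delta}^\tau\cup(\Gamma_{\theta,\delta}\setminus\Gamma_{\theta,\delta}^\tau)$, and the tail $|\Im z|>\pi/\tau$ contributes $O(\tau^{M})$ for any $M$ by the exponential decay (as in \cite{JinLazarovZhou:2016sisc}) — and then subtract the two integrands on $\Gamma_{\theta,\delta}^\tau$:
\[
A_h^{s/2}\Big[(z^\alpha+A_h)^{-1}z^{-\gamma}-(\tau^{-\alpha}\delta(e^{-z\tau})^\alpha+A_h)^{-1}\tau^\gamma\delta(e^{-z\tau})^{-\gamma}\Big]P_hg.
\]
This difference I would split into two pieces: one coming from the discrepancy in the scalar prefactor $z^{-\gamma}$ versus $\tau^\gamma\delta(e^{-z\tau})^{-\gamma}$, controlled by $|z^{-\gamma}-\tau^\gamma\delta(e^{-z\tau})^{-\gamma}|\le c\tau|z|^{1-\gamma}$, and one from the resolvent discrepancy, handled via the identity $(w_1+A_h)^{-1}-(w_2+A_h)^{-1}=(w_2-w_1)(w_1+A_h)^{-1}(w_2+A_h)^{-1}$ with $w_1=z^\alpha$, $w_2=\tau^{-\alpha}\delta(e^{-z\tau})^\alpha$, so $|w_2-w_1|\le c\tau|z|^{\alpha+1}$ and each resolvent factor contributes $|z|^{-\alpha}$ (with one $A_h^{s/2}$ absorbed into $|z|^{s\alpha/2-\alpha}$). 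In both pieces the integrand is bounded by $c\tau\, e^{\Re z t_n}|z|^{(1-s/2)\alpha+\gamma-2+1}\|g\|$, and integrating over the contour of radius $t_{n+1}^{-1}$ gives the claimed $c\tau\,t_{n+1}^{(1-s/2)\alpha+\gamma-2}$. The main obstacle — and the only genuinely new point beyond \cite{JinLazarovZhou:2016sisc} — is keeping careful track of the fractional-integral prefactor: one must verify that $\tau^\gamma\delta(e^{-z\tau})^{-\gamma}$ enjoys the same "consistency" bound $|z^{-\gamma}-\tau^\gamma\delta(e^{-z\tau})^{-\gamma}|\le c\tau|z|^{1-\gamma}$ on $\Gamma_{\theta,\delta}^\tau$ that the resolvent symbol does, which follows from the analyticity and asymptotics of $\zeta\mapsto(1-\zeta)^{-\gamma}/(-\log\zeta)^{-\gamma}$ near $\zeta=1$ but does need to be stated; once this is in hand, everything else is the routine contour estimation already invoked in the proof of Lemma~\ref{deterr1}, and I would present it as "the proof follows easily" with the references supplied.
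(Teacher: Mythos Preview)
Your proposal is correct and matches the paper's approach exactly: the paper gives no detailed argument, merely stating that the bounds ``follow easily'' from the contour representation in Lemma~\ref{lem:Bn}, the resolvent estimate for $A_h$, and the analysis in \cite{JinLazarovZhou:2016sisc}; what you have written is precisely the standard contour-integral argument from that reference, adapted to include the extra factor $\tau^\gamma\delta(e^{-z\tau})^{-\gamma}$. One small slip: in the error estimate the integrand on $\Gamma_{\theta,\delta}^\tau$ is bounded by $c\tau\,e^{\Re z\,t_n}|z|^{(\frac{s}{2}-1)\alpha+1-\gamma}$, not $c\tau\,e^{\Re z\,t_n}|z|^{(1-\frac{s}{2})\alpha+\gamma-1}$ as you wrote (the sign of the exponent is flipped), but your stated final power of $t_{n+1}$ is correct.
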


For any $s\in [0,1]$, we define an index $s^*\equiv s^*(\alpha,\gamma)$ by
\begin{equation*}
  s^* = \left\{\begin{array}{ll}
    \infty, & \mbox{if } (1-\frac{s}{2})\alpha+\gamma -1\geq 0,\\
    \frac{2}{2-2(\alpha+\gamma)+s\alpha}, & \mbox{otherwise}.
  \end{array}\right.
\end{equation*}
For $s^*\geq2$, $s$ should satisfy the condition $s\leq 2-\frac{1-2\gamma}{\alpha}$.
Then the following property holds for $B_{n}$.

\begin{lemma} \label{lem_2_5}
For any $s\in[0,1]$ with $s<2-\frac{1-2\gamma}{\alpha}$ and $p \in [2, s^*)$, there hold
\begin{equation*}
\tau \sum_{j=1}^{n} \| B_{n-j} P_{h} \|_{\mathcal{L}_2^0}^p
\leq c\|A^{-\frac{s}{2}} \|_{\mathcal{L}_2^0}^p.
\end{equation*}
\end{lemma}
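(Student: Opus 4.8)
The plan is to bound the Hilbert–Schmidt norm $\|B_{n-j}P_h\|_{\mathcal{L}_2^0}$ using the operator smoothing estimate of Lemma \ref{lem:err-v-time} together with the factorization trick that converts a norm of $B_{n-j}P_h$ into a norm of $B_{n-j}P_h A^{s/2}$ composed with $A^{-s/2}$. Concretely, I would write
\begin{equation*}
\|B_{n-j}P_h\|_{\mathcal{L}_2^0} = \|B_{n-j}P_h A^{\frac{s}{2}}\cdot A^{-\frac{s}{2}}\|_{\mathcal{L}_2^0}
\leq \|B_{n-j}P_h A^{\frac{s}{2}}\|_{L(H)}\,\|A^{-\frac{s}{2}}\|_{\mathcal{L}_2^0},
\end{equation*}
using the ideal property of Hilbert–Schmidt operators. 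The first factor is then controlled by inserting $A_h^{s/2}A_h^{-s/2}$: $\|B_{n-j}P_hA^{s/2}\| \le \|A_h^{s/2}B_{n-j}\|\,\|A_h^{-s/2}P_hA^{s/2}\|$, where the second factor is $O(1)$ by \eqref{eqn:AAh} (valid since $0\le s\le 1$) and the first factor is bounded by $t_{n-j+1}^{(1-s/2)\alpha+\gamma-1}$ by the first estimate in Lemma \ref{lem:err-v-time}. This reduces the claim to the summability bound
\begin{equation*}
\tau\sum_{j=1}^n t_{n-j+1}^{p\left((1-\frac{s}{2})\alpha+\gamma-1\right)} \le c.
\end{equation*}

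Next I would analyze this sum by the usual Riemann-sum-versus-integral comparison. Setting $\beta := (1-\frac{s}{2})\alpha+\gamma-1$, the sum is $\tau\sum_{k=1}^n t_k^{p\beta} = \tau\sum_{k=1}^n (k\tau)^{p\beta} = \tau^{1+p\beta}\sum_{k=1}^n k^{p\beta}$. Two regimes arise. If $\beta\ge 0$ — equivalently $s^*=\infty$ — then the exponent $p\beta\ge0$ and $\sum k^{p\beta}\le c\, n^{1+p\beta}$, so the sum is $O((n\tau)^{1+p\beta})=O(T^{1+p\beta})=O(1)$, with no restriction on $p$; here one needs the side condition $s\le 2-\frac{1-2\gamma}{\alpha}$ merely to ensure $\|A^{-s/2}\|_{\mathcal{L}_2^0}<\infty$ is the operative obstruction rather than $\beta$. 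If $\beta<0$, then $s^*=\frac{2}{2-2(\alpha+\gamma)+s\alpha}=\frac{1}{-\beta}$, and the condition $p<s^*$ is exactly $p\beta>-1$, i.e. $1+p\beta>0$, so that $\tau^{1+p\beta}\to0$ is bounded while $\sum_{k=1}^n k^{p\beta}$ behaves like $\int_0^n x^{p\beta}\d x \asymp n^{1+p\beta}$ when $p\beta>-1$ (the $k=1$ term $\tau^{1+p\beta}$ is itself bounded since $1+p\beta>0$); combining, $\tau^{1+p\beta}n^{1+p\beta}=(n\tau)^{1+p\beta}=T^{1+p\beta}=O(1)$. Raising to the $p$-th power throughout gives the stated inequality with $\|A^{-s/2}\|_{\mathcal{L}_2^0}^p$ on the right.

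The main technical point to get right is the matching of the two cases in the definition of $s^*$ with the sign of $\beta$, and in particular verifying that the borderline $p=s^*$ is genuinely excluded: when $1+p\beta=0$ the sum $\tau\sum_k k^{-1}\asymp \tau\log n$ does not stay bounded (it tends to $0$, actually, but the clean $O(1)$ bound with the right constant requires $p<s^*$), while for $p<s^*$ one has strict positivity of $1+p\beta$. A minor care point is the $k=1$ (i.e. $j=n$) term, where $t_{n-j+1}=t_1=\tau$ and the contribution is $\tau\cdot\tau^{p\beta}=\tau^{1+p\beta}$; this is harmless precisely when $1+p\beta\ge0$, which holds in both regimes under the hypothesis $p\in[2,s^*)$. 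Everything else is the routine Riemann-sum estimate, and no new nonsmooth-data machinery beyond Lemma \ref{lem:err-v-time} and \eqref{eqn:AAh} is needed.
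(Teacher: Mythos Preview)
Your proposal is correct and follows essentially the same argument as the paper: factor $B_{n-j}P_h = (A_h^{s/2}B_{n-j})(A_h^{-s/2}P_hA^{s/2})A^{-s/2}$, apply Lemma~\ref{lem:err-v-time} and \eqref{eqn:AAh}, and observe that the resulting sum $\tau\sum_k t_k^{p\beta}$ is bounded precisely when $1+p\beta>0$, i.e., $p<s^*$. One small correction to your commentary: the hypothesis $s<2-\frac{1-2\gamma}{\alpha}$ is not there to ensure $\|A^{-s/2}\|_{\mathcal{L}_2^0}<\infty$ (that is an assumption on the noise, not a consequence), but rather to guarantee $\beta>-1/2$ and hence $s^*>2$, so that the range $p\in[2,s^*)$ is nonempty.
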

\begin{proof}
By Lemma \ref{lem:err-v-time}, we deduce
\begin{align*}
\tau \sum_{j=1}^{n} \| B_{n-j} P_{h} \|_{\mathcal{L}_2^0}^p
&\leq \tau \sum_{j=1}^{n} \|A_h^{\frac{s}{2}} B_{n-j}\|^p
\|A_h^{-\frac{s}{2}} P_hA^{\frac{s}{2}} \|^p
\|A^{-\frac{s}{2}} \|_{\mathcal{L}_2^0}^{p}\\
&\leq c\tau \sum_{j=1}^nt_{n-j+1}^{((1-\frac{s}{2})\alpha+\gamma -1)p} \| A^{-\frac{s}{2}} \|_{\mathcal{L}_2^0}^p
< \infty.
\end{align*}
where the second line follows from \eqref{eqn:AAh} and the choice of the exponent $p$.
\end{proof}

Last, we give an important error estimate. It is the main result of this section, and crucial
to both strong and weak convergence. Recall that $p'$ is the conjugate exponent of $p\geq 1$.
\begin{theorem}\label{thm:int}
For any $0\leq s \leq 1$, $p\in [1,s^*)$, there holds
\begin{align*}
\Big ( \sum_{j=0}^{n-1} \int_{t_{j}}^{t_{j+1}} \|A^{ \frac{s}{2}}
\big ( \bar E(t_{n}-t) -    B_{n-j} P_{h}  \big ) \|^p \, \d s \Big )^{1/p} \leq c(t_n^{\frac{r\alpha}{2}+\gamma-\frac1{p'}} h^{2-s-r} + t_n^{\max(\eta-1,0)} \tau^\mu),
\end{align*}
with $\eta=(1-\frac{s}{2})\alpha + \gamma -\frac1{p'}$ and the exponents $r$ and $\mu$ given respectively by
\begin{equation*}
   r \in\left\{\begin{array}{ll}
   (\frac{2}{\alpha}(p^{\prime-1}-\gamma),2-s], & p'\gamma<1,\\
   (0,2-s],  & p'\gamma =1,\\
   {[0,2-s]},  & p'\gamma > 1,
   \end{array}\right.
   \quad\mbox{and}\quad
  \mu = \left\{\begin{array}{ll}
   \eta, & \eta<1,\\
   1-\epsilon, & \eta=1,\\
   1, & \eta>1.
   \end{array}\right.
\end{equation*}
\end{theorem}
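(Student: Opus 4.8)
The plan is to split the error into a spatial part and a temporal part by inserting the intermediate quantity $\bar E_h(t_n-t)P_h$, writing
\[
A^{s/2}\bigl(\bar E(t_n-t)-B_{n-j}P_h\bigr)=A^{s/2}\bigl(\bar E(t_n-t)-\bar E_h(t_n-t)P_h\bigr)+A^{s/2}\bigl(\bar E_h(t_n-t)-B_{n-j}P_h\bigr),
\]
and estimating the two resulting $\ell^p(L^p)$-type sums separately. For the spatial part, I would use Lemma \ref{deterr1} to bound $\|A^{s/2}(\bar E(t)-\bar E_h(t)P_h)\|\leq c\,t^{r\alpha/2+\gamma-1}h^{2-s-r}$ on each subinterval $[t_j,t_{j+1}]$, then sum the resulting integrals $\int_{t_j}^{t_{j+1}}(t_n-t)^{p(r\alpha/2+\gamma-1)}\d t$ over $j$. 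This sum is essentially $\int_0^{t_n}\sigma^{p(r\alpha/2+\gamma-1)}\d\sigma$, which converges precisely when $p(r\alpha/2+\gamma-1)>-1$, i.e. $r>\frac{2}{\alpha}(p'^{-1}-\gamma)$ in the case $p'\gamma<1$ (and always for $p'\gamma\geq1$); integrating gives the bound $c\,t_n^{r\alpha/2+\gamma-1/p'}h^{2-s-r}$ after taking the $p$-th root, which matches the first term on the right-hand side. The restriction $r\le 2-s$ comes from the constraint $r+s\le 2$ in Lemma \ref{deterr1}, and $s<2-\frac{1-2\gamma}{\alpha}$ guarantees the allowed range for $r$ is nonempty.

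For the temporal part, I would split it once more: on the interval $[t_j,t_{j+1}]$ write $\bar E_h(t_n-t)-B_{n-j}P_h=\bigl(\bar E_h(t_n-t)-\bar E_h(t_{n-j})\bigr)+\bigl(\bar E_h(t_{n-j})-B_{n-j}P_h\bigr)$. The second piece is controlled directly by Lemma \ref{lem:err-v-time}, giving $\|A_h^{s/2}(\bar E_h(t_{n-j})-B_{n-j}P_h)\|\le c\tau\,t_{n-j+1}^{(1-s/2)\alpha+\gamma-2}$ (combined with \eqref{eqn:AAh} to handle $P_hA^{s/2}$, or rather here the norm is already as an operator on $H$). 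For the first piece I would use the mean value theorem together with the derivative bound in Lemma \ref{lem:solop} / its discrete analogue: $\|A_h^{s/2}\bar E_h'(\sigma)\|\le c\sigma^{-s\alpha/2+\alpha+\gamma-2}$, so that $\|A_h^{s/2}(\bar E_h(t_n-t)-\bar E_h(t_{n-j}))\|\le c\tau\sup_{\sigma\in[t_{n-j},t_{n-j+1}]}\sigma^{-s\alpha/2+\alpha+\gamma-2}$, except near $t=t_n$ (i.e. $j=n-1$) where the derivative is not integrable and one must instead use the uniform bound $\|A_h^{s/2}\bar E_h\|\le c\sigma^{(1-s/2)\alpha+\gamma-1}$ on the first/last few subintervals. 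Raising to the $p$-th power, summing $\tau^{p}\sum_{j}t_{n-j+1}^{p((1-s/2)\alpha+\gamma-2)}$ over $j$, and taking the $p$-th root then yields a bound of the form $c\,\tau\,t_n^{\eta-1}$ when the exponent sum diverges ($\eta<1$, since $\eta-1=(1-s/2)\alpha+\gamma-1-1/p'$), of the form $c\,\tau^{1-\epsilon}$ logarithmically when $\eta=1$, and of the form $c\,\tau$ when the sum converges ($\eta>1$); collecting cases gives the factor $t_n^{\max(\eta-1,0)}\tau^\mu$.

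The main obstacle I anticipate is the careful bookkeeping of the singularity of $\bar E_h$ and its derivative near $t=t_n$, i.e. the $j=n-1$ (and a few neighboring) terms in the temporal sum, where the naive mean-value-theorem estimate produces a non-integrable power and one must instead bound the two operators $\bar E_h(t_n-t)$ and $\bar E_h(t_{n-j})=\bar E_h(\tau)$ individually by the smoothing estimate $\sigma^{(1-s/2)\alpha+\gamma-1}$ and then integrate $\int_0^\tau (t_n-t)^{p((1-s/2)\alpha+\gamma-1)}\d t$ plus a $\tau\cdot\tau^{p((1-s/2)\alpha+\gamma-1)}$ contribution. Tracking how these boundary contributions combine with the bulk sum to produce exactly the stated exponent $\mu$, and verifying the threshold cases $\eta=1$ and $p'\gamma=1$ (where endpoint powers are borderline non-summable and one loses an $\epsilon$ or must exclude the endpoint of the $r$-interval), is the delicate part. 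The remainder — applying Lemmas \ref{lem:solop}, \ref{lem:solop-discrete}, \ref{deterr1}, \ref{lem:err-v-time}, the bound \eqref{eqn:AAh}, and summing geometric-like series — is routine, and the condition $p\in[1,s^*)$ is precisely what keeps the relevant power sums under control.
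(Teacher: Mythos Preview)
Your proposal is correct and follows essentially the same route as the paper: the same three-way splitting (spatial error via Lemma \ref{deterr1}, the continuous-time increment $\bar E_h(t_n-t)-\bar E_h(t_{n-j})$ via the derivative bound with the $j=n-1$ term isolated, and the discrete error $\bar E_h(t_{n-j})-B_{n-j}$ via Lemma \ref{lem:err-v-time}), the same integrability thresholds on $r$ and $p$, and the same case analysis on $\eta$. The only cosmetic difference is that the paper handles the increment of $\bar E_h$ by writing it as $\int_{t_j}^s A_h^{s/2}\bar E_h'(t_n-t)\,\d t$ and applying H\"older in $t$, rather than your mean-value-theorem/sup formulation, but this yields the same power sum $\tau^p\int_\tau^{t_n}t^{p((1-s/2)\alpha+\gamma-2)}\,\d t$.
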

\begin{proof}
By the triangle inequality, we split the left hand side (LHS) into
\begin{align*}
{\rm LHS} &\leq \Big ( \sum_{j=0}^{n-1} \int_{t_j}^{t_{j+1}} \|A^\frac{s}{2}
(\bar E (t_n-t) - \bar E_h(t_n-t)P_h)\|^p \, \d t \Big )^{1/p} \\
& \quad +\Big (\sum_{j=0}^{n-1} \int_{t_j}^{t_{j+1}} \|A^\frac{s}{2}(\bar E_h(t_n-t)P_h- \bar E_h(t_n-t_j)P_h)\|^p\d t\Big )^{1/p} \\
&\quad + \Big (\sum_{j=0}^{n-1} \int_{t_j}^{t_{j+1}} \|A^\frac{s}{2}
(\bar E(t_n-t_j)-B_{n-j} P_h)\|^p \d t \Big )^{1/p}  := \sum_{i=1}^3 {\rm I}_i^{1/p}.
\end{align*}
It suffices to bound the three terms ${\rm I}_i$. By the choice of the exponent $r$, $ (\frac{r\alpha}{2}+\gamma -1)p > -1$, and
thus, by Lemma \ref{deterr1},
\begin{align*}
{\rm I}_1 
 &\leq c h^{(2-s-r)p} \int_{0}^{t_n} (t_n-t)^{(\frac{r\alpha}{2}+\gamma-1)p} \d t \\
 &\leq c t_n^{(\frac{r\alpha}{2}+\gamma-1)p+1} h^{(2-s-r)p}.
\end{align*}
For the second term ${\rm I}_2$, simple interpolation between $s=0,1$ allows replacing $A$ with $A_h$, and thus
\begin{align*}
{\rm I}_{2} \leq & \sum_{j=0}^{n-2} \int_{t_j}^{t_{j+1}} \|A_h^{\frac{s}{2}}
(\bar E_h(t_n-t)-\bar E_h(t_n-t_j))P_h\|^p\d t\\
& +  \int_{t_{n-1}}^{t_n} \|A_h^{\frac{s}{2}}(\bar E_h(t_n-t) -\bar E_h(\tau))P_h\|^p \d t
:= {\rm I}_{2,1} + {\rm I}_{2,2}.
\end{align*}
For the summation ${\rm I}_{2,1}$, by H\"older inequality and the smoothing property of $\bar{E}'_h(s)$,
\begin{align*}
{\rm I}_{2,1}&= \sum_{j=0}^{n-2} \int_{t_j}^{t_{j+1}} \|
   \int_{t_j}^s A_h^\frac{s}{2}\bar{E}'_h (t_{n}-t)P_h \d t \|^p \d s \\
& \leq \sum_{j=0}^{n-2} \int_{t_j}^{t_{j+1}} \tau^\frac{p}{p'}
  \int_{t_j}^s \|A_h^\frac{s}{2}\bar{E}'_h(t_n-t)P_h\|^p\d t\d s \\
& \leq c\tau^p\int_\tau^{t_n}\|A_h^{\frac{s}{2}}\bar{E}'_h(t)\|^p \d t
\leq c\tau^p\int_\tau^{t_n}t^{((1-\frac{s}{2})\alpha + \gamma-2)p} \d t.
\end{align*}
By the definition of $\eta$, $p((1-\frac{s}{2})\alpha + \gamma-2)=p(\eta-1)-1$, and then direct computation leads to
\begin{align*}
{\rm I}_{2,1} & \leq c\left\{\begin{array}{ll}
  \tau^{p\eta}, & \eta<1,\\
  \tau^{p}\ell_n, & \eta=1,\\
  \tau^{p}t_n^{p(\eta-1)}, & \eta>1,
\end{array}\right.
\end{align*}
with $\ell_n=\ln(1+t_n/\tau)$. For the term ${\rm I}_{2,2}$, by the triangle inequality and Lemmas \ref{lem:solop-discrete}
and \ref{lem:err-v-time}, we deduce
\begin{align*}
{\rm I}_{2,2} & \leq  c  \int_0^\tau \|A_h^\frac{s}{2}\bar E_h(t) \|^p \d t+c\int_0^\tau\|A_h^\frac{s}{2}\bar E_h(\tau)\|^p\d t  \\
& \leq c \int_0^\tau t^{  ((1-\frac{s}{2})\alpha + \gamma -1) p} \d t+ c \tau^{( (1-\frac{s}{2})\alpha + \gamma -1 )p+1}
\leq c \tau^{p\eta },
\end{align*}
where the last step holds due to the choice of the exponent $p\in[1,s^*)$.
For the third and last term ${\rm I}_3$, by Lemma \ref{lem:err-v-time}, there holds
\begin{align*}
{\rm I}_3 &= \sum_{j=0}^{n-1} \int_{t_j}^{t_{j+1}} \|A^{\frac{s}{2}}(\bar E_h(t_n-t_j)-B_{n-j})\|^p\, \d t\\
  & \leq c\tau^{p+1}\sum_{j=0}^{n-1}  (t_{n+1}- t_{j})^{((1-\frac{s}{2})\alpha+ \gamma -2)p}
  \leq c\left\{\begin{array}{ll}
      \tau^{p\eta}, & \eta<1,\\
  \tau^{p}\ell_n, & \eta=1,\\
  \tau^{p}t_n^{p(\eta-1)}, & \eta>1.
  \end{array}\right.
\end{align*}
Combining the preceding estimates on ${\rm I}_i$s completes the proof of the theorem.
\end{proof}

\begin{remark}
Note that for $p\in[1,s^*)$, $\eta>0$ and $\frac{2}{\alpha}(p^{\prime-1}-\gamma)<2-s$, and thus the condition on $r$
makes sense. The fractional orders $\alpha,\gamma$, the noise regularity index $s$, and the integrability index $p$
all enter into the final error estimate, and their properly balancing gives the best possible rate.
\end{remark}

\section{Strong and weak convergence}\label{sec:err-stochastic}
This part gives the strong and weak error estimates of the numerical approximation by the scheme \eqref{eqn:fullydiscrete}.
\subsection{Strong convergence}

Now we can state a strong convergence result in $L^{p}(\Omega;H)$ with $p \geq 2$.
\begin{theorem} \label{thm:strong}
Let $u (t_{n})$ and $ U^{n}$ be the solutions of problems \eqref{eqn:sfde} and \eqref{eqn:fullydiscrete}, respectively.
If $\|A^{-\frac s2}\|_{\mathcal{L}_0^2}<\infty$ for some $s\in[0,1]$ with $s < 2-\frac{1-2\gamma}{\alpha}$,
then for any $p \in [2,s^*)$ and $u_0\in L^p(\Omega;\dot H^q(\Omega))$, $0\leq q\leq 2$, there holds
\begin{align*}
  \|u(t_n)-U^n)\|_{L^p(\Omega; H)} \leq& c(\tau t_n^{-1+\frac{q}{2}\alpha}+h^2t_n^{\frac{q-2}{2}\alpha})\|u_0\|_{L^p(\Omega;\dot H^q(D))}
   + c(t_n^{\frac{r\alpha}{2}+\gamma-\frac12} h^{2-s-r} + t_n^{\max(\eta-1,0)} \tau^\mu).
\end{align*}
with $\eta=(1-\frac{s}{2})\alpha + \gamma-\frac{1}{2}$ and the exponents $r$ and $\mu$ given respectively by
\begin{equation*}
   r \in\left\{\begin{array}{ll}
   (\frac{2}{\alpha}(\frac12-\gamma),2-s], & \gamma<\frac12, \\
   (0,2-s],  & \gamma =\frac12,\\
   {[0,2-s]},  & \gamma > \frac{1}{2},
   \end{array}\right.\quad\mbox{and}\quad
  \mu = \left\{\begin{array}{ll}
   \eta, & \eta<1,\\
   1-\epsilon, & \eta =1,\\
   1, & \eta>1.
   \end{array}\right.
\end{equation*}
\end{theorem}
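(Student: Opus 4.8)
The plan is to decompose the strong error $u(t_n)-U^n$ into a homogeneous (initial-data) part and an inhomogeneous (stochastic-integral) part, matching the splitting of the mild solution \eqref{eqn:mild} and the discrete representation \eqref{eqn:fullydiscrete1}. Writing $u(t_n)-U^n = \big(E(t_n)u_0 - U_h^n\big) + \big(\int_0^{t_n}\bar E(t_n-s)\,\d W(s) - \tau\sum_{k=1}^n B_{n-(k-1)}f^k\big)$, the first bracket is controlled directly in $L^p(\Omega;H)$ by taking $L^p$-expectation of the deterministic bound in Lemma \ref{lem:error-homo}, which gives exactly the term $c(\tau t_n^{-1+\frac{q}{2}\alpha}+h^2 t_n^{\frac{q-2}{2}\alpha})\|u_0\|_{L^p(\Omega;\dot H^q(D))}$. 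So the real work is in the stochastic part.

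For the stochastic part, I would first rewrite the discrete sum $\tau\sum_{k=1}^n B_{n-(k-1)}f^k$ with $f^k=\tau^{-1}P_h\Delta W^k=\tau^{-1}P_h\int_{t_{k-1}}^{t_k}\d W(s)$ as a stochastic integral $\int_0^{t_n}\Psi_n(s)\,\d W(s)$ with the piecewise-constant integrand $\Psi_n(s)=B_{n-j}P_h$ for $s\in(t_{j},t_{j+1}]$, $j=0,\dots,n-1$ — being slightly careful with the index shift so that the step on $(t_j,t_{j+1}]$ uses $B_{n-j}$, consistent with Theorem \ref{thm:int}. Then the stochastic error is $\int_0^{t_n}\big(\bar E(t_n-s)-\Psi_n(s)\big)\,\d W(s)$, and Burkholder's inequality \eqref{burkholder0} bounds its $L^p(\Omega;H)$-norm by $c\big\|\bar E(t_n-\cdot)-\Psi_n(\cdot)\big\|_{L^p(\Omega;L^2(0,t_n;\mathcal{L}_2^0))}$; since the integrand is deterministic this is just $c\big(\sum_{j=0}^{n-1}\int_{t_j}^{t_{j+1}}\|(\bar E(t_n-s)-B_{n-j}P_h)\|_{\mathcal{L}_2^0}^2\,\d s\big)^{1/2}$.

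Next I would insert $A^{-s/2}A^{s/2}$ and pull out the Hilbert-Schmidt factor: $\|(\bar E(t_n-s)-B_{n-j}P_h)\|_{\mathcal{L}_2^0}\le \|A^{s/2}(\bar E(t_n-s)-B_{n-j}P_h)\|\cdot\|A^{-s/2}\|_{\mathcal{L}_2^0}$. The operator-norm factor is precisely what Theorem \ref{thm:int} estimates with $p=2$ (so $p'=2$, $1/p'=1/2$, matching $\eta=(1-\tfrac s2)\alpha+\gamma-\tfrac12$ and the stated $r$-ranges), and $\|A^{-s/2}\|_{\mathcal{L}_2^0}<\infty$ by hypothesis. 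This yields the second group of terms $c(t_n^{\frac{r\alpha}{2}+\gamma-\frac12}h^{2-s-r}+t_n^{\max(\eta-1,0)}\tau^\mu)$. For general $p\ge 2$ one uses Theorem \ref{thm:int} with that same $p$ (legitimate since $p<s^*$), noting that the $L^p(\Omega;\cdot)$-Burkholder bound \eqref{burkholder0} has an $L^2$-in-time inner norm; here I would either argue that the deterministic integrand makes the $\Omega$-norm trivial and then apply the $p$-dependent estimate of Theorem \ref{thm:int} after a Hölder step converting the $L^2(0,t_n)$ norm to an $L^p(0,t_n)$ norm on $[0,t_n]$ (absorbing a $t_n$ power), or invoke the $\mathcal{L}_2^0$-summation bound of Lemma \ref{lem_2_5} to control the tail. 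Combining the two parts and the triangle inequality gives the claimed estimate.

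The main obstacle is the bookkeeping in the stochastic part: getting the index alignment between $\tau\sum_k B_{n-(k-1)}f^k$ and the integrand $\Psi_n$ exactly right so that Theorem \ref{thm:int}'s sum $\sum_{j=0}^{n-1}\int_{t_j}^{t_{j+1}}\|A^{s/2}(\bar E(t_n-t)-B_{n-j}P_h)\|^p\d t$ is reproduced verbatim, and — for $p>2$ — reconciling the $L^2$-in-time norm forced by Burkholder \eqref{burkholder0} with the $L^p$-in-time norm in Theorem \ref{thm:int} without losing the sharp rate. Everything else is a routine application of the already-established deterministic nonsmooth-data estimates (Lemmas \ref{lem:error-homo}, \ref{deterr1}, \ref{lem:err-v-time}, and Theorem \ref{thm:int}) together with Burkholder's inequality.
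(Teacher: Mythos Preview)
Your approach is essentially identical to the paper's: the same splitting into homogeneous and stochastic parts, the same use of Lemma \ref{lem:error-homo} for the former, rewriting the discrete sum as a stochastic integral with piecewise-constant integrand, Burkholder \eqref{burkholder0}, insertion of $A^{\pm s/2}$, and finally Theorem \ref{thm:int}.

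The one place you over-think is the handling of general $p\geq 2$. There is no need to invoke Theorem \ref{thm:int} with time-integrability exponent $p$ or to perform any H\"older conversion: Burkholder's inequality \eqref{burkholder0} always yields an $L^2$-in-time norm on the right, regardless of the exponent $p$ in $L^p(\Omega;H)$ on the left, and since the integrand $\bar E(t_n-\cdot)-\Psi_n(\cdot)$ is deterministic the outer $L^p(\Omega)$ is vacuous. The paper therefore applies Theorem \ref{thm:int} with $p=2$ (so $1/p'=1/2$) for \emph{every} $p\geq 2$, which is exactly why the exponents in the statement involve $\tfrac12$ rather than $\tfrac1{p'}$. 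The hypothesis $p\in[2,s^*)$ plays no role in the argument beyond what is already guaranteed by $s<2-\tfrac{1-2\gamma}{\alpha}$ (namely $s^*>2$), so your flagged ``obstacle'' does not arise.
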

\begin{proof}
By the triangle inequality, we have
\begin{align*}
\| u(t_n) &- U^{n}\|_{L^{p}(\Omega;H)}
  \leq \|( \bar E(t_n) - B_nP_h) u_0 \|_{L^p(\Omega;H)} \\
 & +  \|\sum_{j=0}^{n-1}\int_{t_j}^{t_{j+1}}(\bar{E}(t_n-t)-B_{n-j}P_h)\d W(t)  \|_{L^p(\Omega;H)}:= {\rm I} + {\rm II}.
\end{align*}
In view of Lemma \ref{lem:error-homo}, it suffices to bound the term ${\rm II}$. By Burkholder's inequality
\eqref{burkholder0} and the condition $\|A^{-\frac{s}{2}}\|_{\mathcal{L}_0^2}<\infty$, there holds
\begin{align*}
{\rm II}^2 & \leq c \sum_{j=0}^{n-1} \int_{t_j}^{t_{j+1}} \| \bar E(t_n-t) - B_{n-j} P_h \|_{\mathcal{L}_2^0}^{2} \d t \\
& \leq c\| A^{-\frac{s}{2}} \|_{\mathcal{L}_2^0}^2
\sum_{j=0}^{n-1} \int_{t_j}^{t_{j+1}} \|A^\frac{s}{2}(\bar E(t_{n}-t)-B_{n-j} P_h) \|^{2} \d t.
\end{align*}
Then the desired assertion follows from Theorem \ref{thm:int} with $p=2$.
\end{proof}

\begin{remark}\label{rmk:strong}
The condition $s< 2-\frac{1-2\gamma}{\alpha}$ requests that the noise $W(t)$ should not be too rough, and the condition
always holds for trace class noise, since $\alpha+\gamma>1/2$, cf. \eqref{ass:alpha}. This restriction stems from the
limited smoothing property of the solution operator $\bar E(t)$, cf. Lemma \ref{lem:solop1}. For $u_0=0$ and
trace class noise, i.e., $s=0$, the following statements hold:
\begin{itemize}
  \item[$\rm(i)$] The spatial convergence rate is $O(h^{2-\frac{1-2\gamma}{\alpha}-\epsilon})$ for $\gamma<1/2$,
    and $O(h^2)$ for $\gamma>1/2$. The former is due to the limited smoothing property of
$\bar E(t)$, and it may be enhanced to $O(h^2)$ for smoother noise.
  \item[$\rm(ii)$] The temporal convergence rate is $O(\tau^{\min(1,\alpha+\gamma-\frac{1}{2}-\epsilon)})$.
  When $\gamma =1-\alpha$, it is $O(\tau^{\frac{1}{2}-\epsilon})$, which coincides with that for the stochastic heat
  equation \cite{Yan:2005}, but the spatial convergence rate is $O(h^2)$ only if $\alpha<1/2$ or the noise has extra regularity.
\end{itemize}
These convergence rates agree with the regularity results in Theorems \ref{thm:regularity} and
\ref{thm:time_regularity} in Appendix \ref{app:reg}.
\end{remark}

\subsection{Weak convergence}\label{sec:weak}
For the weak convergence,
first we give a Malliavin regularity of the solution to problem \eqref{eqn:sfde}.
\begin{proposition} \label{prop:Malliavin-reg}
If $\|A^{-\frac s2}\|_{\mathcal{L}_0^2}<\infty$ for some $s\in[0,1]$ with $s\leq 2-\frac{1-2\gamma}{\alpha}$,
then for any $p \geq 2$ and  $q \in [2, s^*)$, and for any $u_0\in L^p(\Omega;\dot H^q(\Omega))$, $0\leq q\leq 2$,
up to modification, there exists a unique stochastic process
$u: [0, T] \times \Omega \to H$ satisfying \eqref{eqn:mild} such that $u \in C([0, T]; M^{1,p, q}(H))$.
\end{proposition}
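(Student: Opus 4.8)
The plan is to prove Malliavin regularity by analyzing the two parts of the mild solution \eqref{eqn:mild} separately. The first term $E(t)u_0$ is a deterministic smoothing operator applied to an $M^{1,p,q}$-valued random variable; I would argue that since $E(t):\dot H^q(D)\to H$ is bounded (Lemma \ref{lem:solop} with $p=q$ gives $\|E(t)v\|\le c\|v\|_q$ up to identifying indices, or more simply $\|E(t)v\|\le c\|v\|$ by boundedness of the resolvent integral), composing with a deterministic bounded linear map commutes with the Malliavin derivative and does not destroy membership in $M^{1,p,q}(H)$. One should check continuity in $t$: the map $t\mapsto E(t)$ is strongly continuous on $H$ (and on $\dot H^q$), so $t\mapsto E(t)u_0$ is continuous into $M^{1,p,q}(H)$ provided $u_0\in M^{1,p,q}$; here $u_0\in L^p(\Omega;\dot H^q)$ and we may take it $\mathcal F_0$-measurable so it has zero Malliavin derivative, placing it in $M^{1,p,q}$.

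The substantive part is the stochastic convolution $\int_0^t\bar E(t-r)\,\d W(r)$. First I would establish that this is well-defined in $L^p(\Omega;H)$ and continuous in $t$: by Burkholder \eqref{burkholder0} and the factorization $\|\bar E(\sigma)\|_{\mathcal L_2^0}\le \|A^{s/2}\bar E(\sigma)\|\,\|A^{-s/2}\|_{\mathcal L_2^0}$ together with the smoothing bound from Lemma \ref{lem:solop} ($\|A^{s/2}\bar E(\sigma)\|\le c\sigma^{-s\alpha/2+\alpha+\gamma-1}$), the integrand lies in $L^2(0,t;\mathcal L_2^0)$ precisely when $(-\tfrac{s\alpha}{2}+\alpha+\gamma-1)\cdot 2>-1$, i.e. $q<s^*$ with $q=2$; the general $p$ and temporal continuity follow by the standard factorization (stochastic Fubini) argument as in \cite{Yan:2005,DaPratoZabczyk:2014}. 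For the Malliavin derivative, I would compute it directly: for a stochastic convolution of a deterministic operator-valued kernel against $W$, one has $D\big[\int_0^t\bar E(t-r)\,\d W(r)\big](\sigma)=\bar E(t-\sigma)\mathbf 1_{[0,t]}(\sigma)$ (viewed appropriately as an $\mathcal L_2^0$-valued object), a fact that follows from approximating the kernel by simple processes and using closedness of $D$. Then the $L^p(\Omega;L^q(0,T;\mathcal L_2^0))$-norm of this derivative is deterministic and equals $\big(\int_0^t\|\bar E(t-\sigma)\|_{\mathcal L_2^0}^q\,\d\sigma\big)^{1/q}$, which is finite by the same smoothing estimate exactly under the condition $q<s^*$ (and $s\le 2-\frac{1-2\gamma}{\alpha}$ when $s^*\ge 2$). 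Continuity in $t$ of this $L^q$-norm, and more precisely of the kernel in $L^q(0,T;\mathcal L_2^0)$, again comes from strong continuity of $\bar E$ plus dominated convergence.

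Assembling these, $u(t)=E(t)u_0+\int_0^t\bar E(t-r)\,\d W(r)$ lies in $M^{1,p,q}(H)$ for each $t$, with $\|u(t)\|_{M^{1,p,q}(H)}$ bounded uniformly on $[0,T]$, and $t\mapsto u(t)$ is continuous into $M^{1,p,q}(H)$; uniqueness follows from uniqueness of the mild solution in $L^2(\Omega;H)$ together with density of $S^q(H)$. The statement "up to modification" is handled by noting that two mild solutions agree a.s. for each $t$ and both have continuous versions in $M^{1,p,q}(H)$.

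I expect the main obstacle to be the rigorous identification of the Malliavin derivative of the stochastic convolution and the verification that membership in $M^{1,p,q}(H)$ (rather than merely the existence of some $\mathcal L_2^0$-valued derivative in $L^p(\Omega;L^q)$) holds — i.e., that $u(t)$ actually lies in the closure $S^q(H)$ of cylindrical variables under the graph norm. The clean way is to write $\int_0^t\bar E(t-r)\,\d W(r)$ as an $L^p(\Omega;H)$-limit of Riemann–Stieltjes-type sums $\sum_i \bar E(t-r_i)(W(r_{i+1})-W(r_i))$, each of which is manifestly in $S^q(H)$ (it is a finite sum of tensors $v\otimes F$ with $F$ a smooth cylindrical functional of increments of $W$ against indicator kernels), and then check that these sums are Cauchy in the $M^{1,p,q}(H)$-norm by controlling both the $L^p(\Omega;H)$ part (Burkholder) and the derivative part (again a deterministic $L^q$-in-time bound on differences of $\bar E$), invoking the smoothing estimates of Lemma \ref{lem:solop} uniformly. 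The bookkeeping of which index constraints ($q<s^*$, $s\le 2-\frac{1-2\gamma}{\alpha}$) are needed where is the only delicate point, and it mirrors exactly the computation already carried out in Theorem \ref{thm:int}.
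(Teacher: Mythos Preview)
Your approach is correct and takes a genuinely different route from the paper's own proof. The paper first establishes $u\in L^2(0,T;M^{1,2}(H))$ indirectly, by showing the fully discrete approximations $U^n$ satisfy uniform $M^{1,2}$ bounds (via direct computation of $D[U^n](\sigma)=\sum_j\chi_{[t_j,t_{j+1})}(\sigma)B_{n-j}P_h$) and then passing to the limit using the strong error estimate; only after this does it invoke an external result \cite[Proposition 3.5(ii)]{FuhrmanTesssitore:2002} (equivalently \cite[(3.8)]{AnderssonKruseLarsson:2016}) to identify $D[u(t)](\sigma)=\bar E(t-\sigma)$, and finally bounds the $L^q(0,T;\mathcal L_2^0)$-norm exactly as you do. Your argument bypasses both the discrete scheme and the cited identification result: you approximate the stochastic convolution directly by Riemann--Stieltjes sums of cylindrical variables and verify that these are Cauchy in the $M^{1,p,q}$ graph norm, which is more self-contained and arguably more elementary. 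The paper's route is economical given that the discrete estimates are already in hand for other purposes, while yours would stand independently of the numerical analysis. Both land on the same computation $\int_0^t\|A^{s/2}\bar E(\sigma)\|^q\,\d\sigma<\infty$ under $q<s^*$, and both handle the $E(t)u_0$ term trivially via $\mathcal F_0$-measurability. One small caution: the statement overloads the symbol $q$ (Sobolev index for $u_0$ versus temporal integrability index in $M^{1,p,q}$); your write-up should disambiguate.
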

\begin{proof}
The proof is similar to \cite[Proposition 4.4]{AnderssonKovacs:2016}, and thus we only give a sketch.
First, we show $u \in L^{2}(0, T; M^{1, 2}(H))$. This can be done by first proving
$\| U^{n} \|_{M^{1, 2}(H)} + \| U^{n}\|_{M^{1, p, 2}} < \infty$,
by straightforward calculation of the term $D [U^{n}]  ( \sigma)$
(see \cite[Proposition 4.3]{AnderssonKovacs:2016}), and then proving the error estimate of
$\|u(t_{n}) - U^{n}\|_{L^{2}(\Omega; H)}$, by the argument of
\cite[Theorem  4.2]{AnderssonKovacs:2016}. Then a limiting procedure gives $u \in L^{2}(0, T; M^{1, 2}(H)).$

Since $u \in L^{2}(0,T;M^{1, 2}(H))$, we may apply  \cite[Proposition 3.5 (ii)]{FuhrmanTesssitore:2002} or
\cite[(3.8)]{AnderssonKruseLarsson:2016} to obtain the Malliavin derivative of the solution $u$: for  any $\sigma\in[0,T]$,
\begin{align*}
D [ u(t)] (\sigma)
&  = \left\{\begin{array}{ll} D [E(t) u_{0}] (\sigma)
+ D [ \int_{0}^{t} \bar{E} (t-s) \, \d W(s) ](\sigma), & \sigma \leq t \leq T,\\
0, & 0 <t <\sigma,
\end{array}\right.\\
&= \left\{\begin{array}{ll}  \bar{E} (t- \sigma), & \sigma \leq t \leq T,\\
0, & 0 <t <\sigma.
\end{array}\right.
\end{align*}
Then the smoothing property of $\bar{E}(t)$ in Lemma \ref{lem:solop} implies
\begin{align*}
& \| D[u(t)] \|_{L^p(\Omega;L^q(0, T;\mathcal{L}_2^0))}^q
 = \| D[u(t)] \|_{L^p(\Omega; L^q(0, t;\mathcal{L}_2^0))}^q \\
=&  \| \bar{E} (t- \cdot) \|_{L^q(0, t;\mathcal{L}_2^0)}^q
= \int_{0}^{t} \| \bar{E} (t-s) \|_{\mathcal{L}_2^0}^q \d s \\
 \leq& c \| A^{-\frac{s}{2}} \|_{\mathcal{L}_2^0}^q
\int_{0}^{t} s^{((1- \frac{s}{2}) \alpha + \gamma -1)q} \d s  < \infty,
\end{align*}
where the last inequality is due to the choice of the exponent $q$. This completes the proof.
\end{proof}

The next result gives a similar bound on the discrete solution  $U^{n}$.
\begin{proposition} \label{prop:Malliavin-reg-disc}
If $\|A^{-\frac s2}\|_{\mathcal{L}_0^2}<\infty$ for some $s\in[0,1]$ with $s\leq 2-\frac{1-2\gamma}{\alpha}$,
then for any $p \geq 2$ and  $q \in [2, s^*)$, and $u_0\in L^p(\Omega;\dot H^q(\Omega))$, $0\leq q\leq 2$,
the solution $U^{n}$ to \eqref{eqn:fullydiscrete} satisfies $\| U^{n} \|_{M^{1, p,q}(H)} \leq  c$.
 \end{proposition}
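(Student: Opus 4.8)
The plan is to bound $\|U^n\|_{M^{1,p,q}(H)}$ by its two constituent pieces, namely $\|U^n\|_{L^p(\Omega;H)}$ and $\|D[U^n]\|_{L^p(\Omega;L^q(0,T;\mathcal{L}_2^0))}$, using the solution representation \eqref{eqn:fullydiscrete1}. First I would split $U^n = U_h^n + \tau\sum_{k=1}^n B_{n-(k-1)}f^k$ and recall that $f^k = \tau^{-1}P_h\Delta W^k$, so that the inhomogeneous part is $\sum_{k=1}^n B_{n-(k-1)}P_h\Delta W^k = \sum_{j=0}^{n-1}\int_{t_j}^{t_{j+1}} B_{n-j}P_h\,\d W(t)$. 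For the $L^p(\Omega;H)$ bound of this stochastic sum, apply Burkholder's inequality \eqref{burkholder0} followed by Lemma \ref{lem_2_5} (with $s$ as in the hypothesis and $p\in[2,s^*)$, which is admissible since $q\in[2,s^*)$ covers the needed range), giving $\|\sum_j\int_{t_j}^{t_{j+1}} B_{n-j}P_h\,\d W\|_{L^p(\Omega;H)}^p \leq c\,\tau\sum_{j=1}^n\|B_{n-j}P_h\|_{\mathcal{L}_2^0}^p \leq c\|A^{-s/2}\|_{\mathcal{L}_2^0}^p$. The homogeneous part $U_h^n = E_h(t_n)P_hu_0$ is controlled by the smoothing property of $E_h$ (the discrete analogue of Lemma \ref{lem:solop}, using $\||P_hg\||_q\leq c|g|_q$) together with $u_0\in L^p(\Omega;\dot H^q(D))$; since $U_h^n$ is a deterministic operator applied to $u_0$, this is immediate. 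This handles the $L^p(\Omega;H)$ term.

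Next I would compute $D[U^n](\sigma)$ directly from \eqref{eqn:fullydiscrete1}. Since $U_h^n$ depends only on the $\mathcal{F}_0$-measurable $u_0$ and not on the Wiener increments, and since $\Delta W^k = \int_{t_{k-1}}^{t_k}\d W(t)$, the Malliavin derivative of the $k$-th term is $B_{n-(k-1)}P_h$ acting as multiplication on $\mathbf{1}_{(t_{k-1},t_k]}(\sigma)$; more precisely, following the computation in \cite[Proposition 4.3]{AnderssonKovacs:2016}, one obtains $D[U^n](\sigma) = B_{n-j}P_h$ for $\sigma\in(t_j,t_{j+1}]$, $j=0,\dots,n-1$, and $0$ otherwise. (The $u_0$-contribution to $D[U^n]$ vanishes because $u_0$ is $\mathcal{F}_0$-measurable and hence has zero Malliavin derivative; if one instead allows $u_0$ to be Malliavin differentiable, an analogous term $E_h(t_n)P_hD[u_0](\sigma)$ appears and is bounded by the hypothesis $u_0\in M^{1,p,q}$, but under the stated regularity $u_0\in L^p(\Omega;\dot H^q)$ alone it is the constant case that is relevant — I would state this carefully to match Proposition \ref{prop:Malliavin-reg}.) Then
\[
\|D[U^n]\|_{L^q(0,T;\mathcal{L}_2^0)}^q = \sum_{j=0}^{n-1}\int_{t_j}^{t_{j+1}}\|B_{n-j}P_h\|_{\mathcal{L}_2^0}^q\,\d\sigma = \tau\sum_{j=1}^{n}\|B_{n-j}P_h\|_{\mathcal{L}_2^0}^q,
\]
which is exactly the quantity bounded in Lemma \ref{lem_2_5} (now with exponent $q\in[2,s^*)$ in place of $p$), yielding $\leq c\|A^{-s/2}\|_{\mathcal{L}_2^0}^q<\infty$. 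Since this bound is deterministic, taking the $L^p(\Omega)$-norm is trivial. Assembling the two pieces into the $M^{1,p,q}(H)$ norm gives $\|U^n\|_{M^{1,p,q}(H)}\leq c$, uniformly in $n$ (and in $h,\tau$), as claimed.

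The main obstacle is the rigorous justification of the formula for $D[U^n](\sigma)$: one must verify that $U^n\in S^q(H)$ or lies in the domain of the closed operator $D$, and that the Malliavin derivative commutes with the finite sum and with the bounded linear operators $B_{n-j}P_h$. This is where I would lean on the closability of $D$ (\cite[Lemma 3.2]{AnderssonKruseLarsson:2016}) and on the chain/product rule structure already used in \cite[Proposition 4.3]{AnderssonKovacs:2016} — $U^n$ is a finite linear combination of Wiener increments with deterministic operator coefficients, so it is a cylindrical random variable of exactly the form covered there, and the derivative formula follows. Everything else reduces to the already-established Lemma \ref{lem_2_5} and the discrete smoothing estimates of Lemma \ref{lem:err-v-time}, so once the Malliavin-derivative identity is in place the proof is a short verification; accordingly I would present it as a sketch, referencing Proposition \ref{prop:Malliavin-reg} and \cite{AnderssonKovacs:2016} for the computational details.
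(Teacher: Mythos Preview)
Your proposal is correct and follows essentially the same approach as the paper: split $U^n$ via \eqref{eqn:fullydiscrete1}, bound the $L^p(\Omega;H)$-norm of the stochastic part by Burkholder's inequality \eqref{burkholder0} and Lemma~\ref{lem_2_5}, compute $D[U^n](\sigma)=\sum_{j=0}^{n-1}\chi_{[t_j,t_{j+1})}(\sigma)B_{n-j}P_h$ (the paper cites \cite[Prop.~3.16]{AnderssonKruseLarsson:2016} for this identity), and bound its $L^q(0,T;\mathcal{L}_2^0)$-norm again by Lemma~\ref{lem_2_5}. One small slip: after Burkholder's inequality \eqref{burkholder0} the time integrability is $L^2$, not $L^p$, so the sum you obtain is $\tau\sum_j\|B_{n-j}P_h\|_{\mathcal{L}_2^0}^2$ (i.e.\ Lemma~\ref{lem_2_5} with exponent $2$, exactly as the paper does), not with exponent $p$; this does not affect the conclusion.
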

\begin{proof}
By the representation \eqref {eqn:fullydiscrete1}, we have
\begin{align*}
\| U^{n} \|_{L^{p}(\Omega;H)}
& \leq \|U_{h}^{n} \|_{L^{p}(\Omega;H)}
+ \Big \| \int_{0}^{T} \sum_{j=0}^{n-1} \chi_{[t_{j}, t_{j+1})} (s) B_{n-j} P_{h} \d W (s) \Big \|_{L^{p}(\Omega; H)} : = {\rm I}_{1} + {\rm I}_{2}.
\end{align*}
In view of Lemma \ref{lem:error-homo}, it suffices to bound the second term ${\rm I}_2 $. By
Burkholder's inequality \eqref{burkholder0} and Lemma  \ref{lem_2_5} with $p=2$, we get
\begin{align}
{\rm I}_{2}& \le c \| \sum_{j=0}^{n-1} \chi_{[t_{j}, t_{j+1})} (s) B_{n-j} P_{h} \|_{L^2(0, T;\mathcal{L}_2^0)} =  c\big(
\tau \sum_{j=0}^{n-1} \| B_{n-j} P_{h} \|_{\mathcal{L}_2^0}^2 \big )^{\frac{1}{2}}  < \infty. \notag
\end{align}
This directly implies $\| U^{n} \|_{L^{p}(\Omega;H)} \leq c$. 
Next we bound the Malliavin derivative  $D [U^{n}] (\sigma)$ of $U^{n}$, $\sigma \in [0, T]$. By applying the
Malliavin derivative to the representation \eqref {eqn:fullydiscrete1} termwise and noting the identity
$ D[\int_{t_{j}}^{t_{j+1}} B_{n-j}P_h \d W(s)] (\sigma) = \chi_{[t_{j}, t_{j+1})}(\sigma) B_{n-j}P_h$
\cite[Prop. 3.16]{AnderssonKruseLarsson:2016}, we obtain
$$
D[U^{n}] (\sigma)=\sum_{j=0}^{n-1} \chi_{[t_{j}, t_{j+1})}(\sigma) B_{n-j}P_h,\quad \sigma\in[0,T].
$$
Hence, by Lemma \ref{lem_2_5}, there holds
\begin{align*}
& \|D[U^{n}]\|_{L^p(\Omega;L^q(0,T;\mathcal{L}_2^0))}^q
 =\|\sum_{j=0}^{n-1}\chi_{[t_{j}, t_{j+1})}(s) B_{n-j}P_h\|_{L^p(\Omega;L^q(0,T;\mathcal{L}_2^0))}^q  \\
 =& \|\sum_{j=0}^{n-1} \chi_{[t_{j}, t_{j+1})}(s) B_{n-j} P_h\|_{L^q(0,T;\mathcal{L}_2^0)}^q
=\tau \sum_{j=0}^{n-1} \| B_{n-j}P_h\|_{\mathcal{L}_2^0}^q< \infty.
\end{align*}
This completes the proof of the proposition.
\end{proof}

Last, we can give the weak convergence of the approximation $U^n$.
\begin{theorem} \label{thm:weak}
Let  $u (t_{n})$ and $ U^{n}$ be the solutions of \eqref{eqn:sfde} and
\eqref{eqn:fullydiscrete}, respectively, and $\Phi \in \mathcal{G}_{p}^{2,2}(H;\mathbb{R})$.
If $\|A^{-\frac s2}\|_{\mathcal{L}_0^2}<\infty$ for some $s\in[0,1]$ with $s< 2-\frac{1-2\gamma}{\alpha}$,
then for any $p\in [2,s^*)$ and $u_0\in L^p(\Omega;\dot H^q(D))$, $0\leq q\leq 2$, there holds
\begin{align*}
  |\mathbb{E}[\Phi(u(t_n))-\Phi(U^n)]|  \leq  c(\tau t_n^{-1+\frac{q}{2}\alpha}+h^2t_n^{\frac{q-2}{2}\alpha})
  \|u_0\|_{L^p(\Omega;\dot H^q(D))}+c(t_n^{\frac{r\alpha}{2}+\gamma-\frac1p} h^{2-s-r} + t_n^{\max(\eta-1,0)} \tau^\mu),
\end{align*}
with $\eta= (1-\frac{s}{2})\alpha + \gamma - \frac{1}{p}$ and the exponents $r$ and $\mu$ given respectively by
\begin{equation*}
   r \in\left\{\begin{array}{ll}
   (\frac{2}{\alpha}(\frac1p-\gamma),2-s], &\gamma p<1, \\
   (0,2-s],  & \gamma p=1,\\
   {[0,2-s]},  & \gamma p> 1,
   \end{array}\right.\quad\mbox{and}\quad
  \mu = \left\{\begin{array}{ll}
   \eta, & \eta < 1,\\
   1-\epsilon, & \eta = 1,\\
   1, & \eta>1.
   \end{array}\right.
\end{equation*}
\end{theorem}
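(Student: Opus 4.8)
The plan is to follow the operator-theoretic/Malliavin framework of \cite{AnderssonKruseLarsson:2016}, using the duality pairing between $M^{1,p}(H)$ and $M^{1,p}(H)^*$ together with the Burkholder-type inequality \eqref{burkholder} in the dual norm. First I would reduce the weak error $|\ee[\Phi(u(t_n))-\Phi(U^n)]|$ to a sum of a deterministic (initial-data) contribution and a stochastic-integral contribution. For the deterministic part, a Taylor expansion of $\Phi$ gives $|\ee[\Phi(u(t_n))-\Phi(U^n)]| \leq c\,\|u(t_n)-U^n\|_{L^p(\Omega;H)}$ modulo higher-order terms controlled by the growth condition $\Phi\in\mathcal G_p^{2,2}(H;\mathbb R)$ and the uniform moment bounds $\|u(t_n)\|_{L^p(\Omega;H)}, \|U^n\|_{L^p(\Omega;H)}\leq c$ coming from Propositions \ref{prop:Malliavin-reg} and \ref{prop:Malliavin-reg-disc}; this piece inherits the rate $c(\tau t_n^{-1+\frac q2\alpha}+h^2 t_n^{\frac{q-2}2\alpha})\|u_0\|_{L^p(\Omega;\dot H^q(D))}$ from Lemma \ref{lem:error-homo}.

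The heart of the argument is the stochastic part. Writing the error as $\int_0^{t_n}G(t_n,t)\,\d W(t)$ with $G(t_n,t)=\sum_{j=0}^{n-1}\chi_{[t_j,t_{j+1})}(t)\big(\bar E(t_n-t)-B_{n-j}P_h\big)$, I would introduce a smooth test functional and use the Malliavin integration-by-parts (duality) formula to move $\Phi^{(1)}$ inside, so that the weak error is estimated by $c\,\|G(t_n,\cdot)\|_{L^{p'}(\Omega;L^{p'}(0,T;\mathcal L_2^0))}$ via \eqref{burkholder}, after inserting the Malliavin regularity of $\Phi'(u)$ provided by Lemma \ref{lem:bd-Malliavin} applied with $r=0$ (since $\Phi\in\mathcal G_p^{2,2}$) and Proposition \ref{prop:Malliavin-reg}. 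The key analytic step is then to bound $\|G(t_n,\cdot)\|_{L^{p'}(0,T;\mathcal L_2^0)}$; factoring out $\|A^{-\frac s2}\|_{\mathcal L_2^0}$ and using $\|A_h^{-s/2}P_hA^{s/2}\|\leq c$ from \eqref{eqn:AAh}, this reduces exactly to $\big(\sum_{j=0}^{n-1}\int_{t_j}^{t_{j+1}}\|A^{s/2}(\bar E(t_n-t)-B_{n-j}P_h)\|^{p'}\,\d t\big)^{1/p'}$, which is precisely Theorem \ref{thm:int} with exponent $p'$ in place of $p$ (note $p\in[2,s^*)$ forces $p'\in(1,2]\subset[1,s^*)$ so the theorem applies). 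Reading off the bound with $\eta=(1-\frac s2)\alpha+\gamma-\frac1p$ (since $1/(p')'=1/p$) and the stated $r$-range (where the condition $p'\gamma<1,=1,>1$ becomes $\gamma p<1,=1,>1$ because $p^{\prime-1}=1-\frac1p$... more precisely $p'\gamma<1 \iff \gamma<1-\frac1p$, and I would double-check the exact translation here) yields the $h$- and $\tau$-terms in the claimed estimate.

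The main obstacle I anticipate is the Malliavin duality step: one must justify applying \eqref{burkholder} not to a fixed deterministic integrand but to $\Phi'(\xi_\lambda)G(t_n,\cdot)$ along a path $\xi_\lambda$ interpolating $u(t_n)$ and $U^n$ (as in the Taylor remainder), and argue that the extra factor $\Phi'(\xi_\lambda)$, bounded in the appropriate $M^{1,p,q}$-norm uniformly in $\lambda$, does not degrade the rate — this requires the refined Sobolev–Malliavin machinery of \cite{AnderssonKruseLarsson:2016} and the uniform-in-$\lambda$ Malliavin bounds, combining Lemma \ref{lem:bd-Malliavin} with Propositions \ref{prop:Malliavin-reg}–\ref{prop:Malliavin-reg-disc}. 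A secondary technical point is handling the higher-order Taylor term $\frac12\Phi^{(2)}(\xi_\lambda)(e,e)$ with $e=u(t_n)-U^n$: here I would use $\|\Phi^{(2)}\|_{\mathcal B(H;\mathbb R)}\leq c$ (the $\ell=2$ case of $\mathcal G_p^{2,2}$) and $\|e\|_{L^{2p}(\Omega;H)}^2$ bounded by the strong rate of Theorem \ref{thm:strong}, which — being at most the first-order rate squared — is dominated by the first-order term, so it does not affect the final estimate. Once these two points are in place, collecting the deterministic and stochastic contributions gives the stated bound.
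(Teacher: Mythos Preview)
Your approach is essentially the paper's, but you are over-complicating two points that the paper handles more cleanly. First, the paper uses the \emph{exact} integral mean value formula
\[
\Phi(u(t_n))-\Phi(U^n)=\Big(\int_0^1\Phi'(\rho\,u(t_n)+(1-\rho)U^n)\,\d\rho,\;u(t_n)-U^n\Big),
\]
so no second-order Taylor remainder ever arises and your ``secondary technical point'' is moot. Second, the paper applies the Gel'fand-triple duality \emph{once} to this identity, obtaining
\[
|\ee[\Phi(u(t_n))-\Phi(U^n)]|\le \Big\|\int_0^1\Phi'(\cdots)\,\d\rho\Big\|_{M^{1,p}(H)}\,\|u(t_n)-U^n\|_{M^{1,p}(H)^*},
\]
which cleanly separates the $\Phi'$-factor (bounded via Lemma~\ref{lem:bd-Malliavin} with $\gamma=\Phi'$, $r=1$, together with Propositions~\ref{prop:Malliavin-reg}--\ref{prop:Malliavin-reg-disc}) from the error. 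Your ``main obstacle'' therefore disappears: the integrand fed to the dual Burkholder inequality \eqref{burkholder} is just the deterministic $G(t_n,\cdot)$, not $\Phi'(\xi_\lambda)G$. One then splits $\|u(t_n)-U^n\|_{M^{1,p}(H)^*}$ into the initial-data part (Lemma~\ref{lem:error-homo}, using $\|\cdot\|_{M^{1,p}(H)^*}\le\|\cdot\|_{L^2(\Omega;H)}$ from the Gel'fand embedding) and the stochastic part (Theorem~\ref{thm:int} with exponent $p'\in(1,2]$, noting $(p')'=p$ so the $p'\gamma$-conditions there become the stated $p\gamma$-conditions), exactly as you sketched.
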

\begin{proof}
In view of the Gel'fand triple $M^{1, p} (H) \subset L^{2}(\Omega;H) \subset M^{1, p}(H)^{*}$, there holds
\begin{align*}
|\mathbb{E} [ \Phi (u(t_{n})) - \Phi (U^n)]|
 &= |\mathbb{E} [(\int_{0}^{1} \Phi^{\prime} (\rho u(t_{n}) + (1- \rho) U^{n} ) \, \d \rho, u(t_{n})- U^{n} ) ]| \\
& \leq  \|   \int_{0}^{1} \Phi^{\prime} (\rho u(t_{n}) + (1- \rho) U^{n} ) \, \d \rho \|_{M^{1, p} (H)}
 \| u(t_{n}) - U^{n} \|_{M^{1, p}(H)^{*}}.
\end{align*}
Now we claim any $p\in[2,s^*)$, $\|\int_0^1\Phi'(\rho u(t_n) + (1- \rho) U^n) \d \rho \|_{M^{1, p}(H)} < \infty$.
Actually, by Lemma \ref{lem:bd-Malliavin} with $\gamma = \Phi^{\prime}$ and $r=1$ and $ q=p$,   $p\in [2,s^*)$, we get
\begin{align*}
 &\| \Phi^{\prime} (\rho u(t_{n}) + (1- \rho) U^{n} )\|_{M^{1, p}(H)}\\
\leq& c(1+ \| \rho u(t_{n})  + (1- \rho) U^{n} \|_{M^{1, p}(H)})\\
\leq &c(1 + \| u(t_{n}) \|_{M^{1,2p, p}(H)} +  \| U^{n} \|_{M^{1,2p, p}(H)}).
\end{align*}
Thus the claim follows from Propositions \ref{prop:Malliavin-reg}  and \ref{prop:Malliavin-reg-disc}.
It remains to bound  $ \|u(t_n) - U^n\|_{M^{1, p}(H)^\ast}$. By the triangle inequality,
\begin{align*}
 \|u(t_n)-U^n\|_{M^{1, p}(H)^{*}} \leq & \|(E(t_n)-B_nP_h)u_0\|_{M^{1, p}(H)^{*}} \\
& + \| \int_{0}^{t_n} \bar{E}(t_n-t)\d W(t)
- \sum_{j=0}^{n-1}  \int_{t_j}^{t_{j+1}} B_{n-j}P_h \d W(t)\|_{M^{1, p}(H)^{*}} := {\rm I + II}.
\end{align*}
In view of Lemma \ref{lem:error-homo}, it suffices to bound ${\rm II}$. By Burkholder inequality \eqref{burkholder}, we have
\begin{align*}
{\rm II}^{p'} & \leq c\sum_{j=0}^{n-1} \int_{t_j}^{t_{j+1}} \| \bar E(t_n-t) - B_{n-j} P_h \|_{\mathcal{L}_2^0}^{p'} \d t\\
& \leq c\| A^{-\frac{s}{2}} \|_{\mathcal{L}_2^0}^{p'}\sum_{j=0}^{n-1} \int_{t_j}^{t_{j+1}} \|A^{\frac{s}{2}}
(\bar E(t_{n}-t)-B_{n-j} P_{h}) \|^{p'} \, \d s.
\end{align*}
Then Theorem \ref{thm:int} with $p'\in(1,2)$ completes  the proof.
\end{proof}
\begin{remark}\label{rmk:weak}
The condition $s< 2-\frac{1-2\gamma}{\alpha}$ ensures that $s^*> 2$ so that the choice $p\in[2,s^*)$
is valid. We specialize Theorem \ref{thm:weak} to $u_0=0$ and trace class noise $W(t)$, i.e., $s=0$,
and distinguish two cases for the weak error estimates: {\rm(}a{\rm)} $\alpha+\gamma\ge1$ and {\rm(}b{\rm)} $\alpha+\gamma< 1$:
\begin{itemize}
  \item[$\rm(a)$] The exponent $p$ can be arbitrarily large. Thus, the spatial convergence rate is $O(h^2)$
  for any $\gamma\geq1-\alpha$, and the temporal one $O(\tau^{\min(1,\alpha+\gamma-\epsilon)})$. When $\gamma =1-\alpha$, the temporal
  rate is $O(\tau^{1-\epsilon})$, which coincides with that for the stochastic heat equation, but the spatial rate is $O(h^2)$ only if
  $\alpha<1/2$ or $W(t)$ has extra regularity.
  \item[$\rm(b)$] The largest possible exponent $p$ is $p=\frac1{1-\alpha-\gamma}-\epsilon>2$. Hence, the spatial
  rate is $O(h^2)$ for $\gamma>\frac{1-\alpha}{2}$, and $O(h^{4-\frac{2(1-2\gamma)}{\alpha}-\epsilon})$ for
  $\gamma\leq\frac{1-\alpha}{2}$ {\rm(}note that $4-\frac{2}{\alpha}(1-2\gamma)\in (0,2]$ under the designated
  conditions \eqref{ass:alpha} and $\gamma\leq \frac{1-\alpha}{2}${\rm)}. The temporal rate is always $O(\tau^{\alpha+\gamma-\epsilon})$.
\end{itemize}
\end{remark}

\begin{remark}
Note that our analysis relies only on Laplace transform and resolvent estimate. Hence, it applies also to
slightly more general positive kernels, for which however we are not aware of any
mathematical modeling with fractionally integrated Gaussian noise.
\end{remark}
\section{Numerical experiments and discussions} \label{sec:numeric}

Now we present numerical results for the model \eqref{eqn:sfde}
with $0 < \alpha <  1$ and $ 0 \leq  \gamma \leq 1$ on the unit interval $D=(0,1)$
to illustrate the theoretical analysis.

\subsection{Implementation details}\label{ssec:implement}
First, we describe the implementation of the noise term $W(t)$, following
\cite{Yan:2005}. We consider only the case the covariance operator $Q$ shares
the eigenfunctions with the operator $A$. Recall the Fourier expansion of the
Wiener process $W(t)$ in \eqref{eqn:Wiener}:
\[
W( t) = \sum_{\ell=1}^{\infty} \gamma_{\ell}^\frac{1}{2} e_{\ell} {\beta}_{\ell}(t),
\]
where ${\beta}_{\ell}$, $\ell=1, 2, \dots,$ are i.i.d. Brownian motions,
and $\gamma_\ell$ and $e_\ell$ are the eigenvalues (ordered nondecreasingly, with multiplicity
counted) and eigenfunctions of $Q$. Thus the $L^2(D)$-projection $P_hW(t)\in X_h$ is given by
(with $L$ term truncation)
\[
 (P_hW(t),\chi) = \sum_{\ell=1}^\infty \gamma_\ell^\frac{1}{2}\beta_\ell(t)(e_\ell,\chi)
 \approx \sum_{\ell=1}^{L} \gamma_{\ell}^\frac{1}{2}\beta_{\ell} (t) (e_{\ell},\chi),    \quad \forall \chi\in X_h.
\]
Since $\beta_\ell(t)$s are i.i.d. Brownian motions, the increments $\Delta \beta_\ell^k$ are given by
\begin{equation*}
  \Delta \beta_\ell^k = \beta_\ell(t_k)-\beta_\ell(t_{k-1})\sim \sqrt{\tau}\mathcal{N}(0,1),\quad k=1,2,\ldots,N,
\end{equation*}
where $\mathcal{N}(0,1)$ denotes the standard Gaussian distribution.
Further, the fractionally integrated noise $P_h \dot W(t_k)$ is approximated by backward difference
\begin{equation*}
  P_h \dot W(t_k) \approx\frac{P_hW(t_{k}) - P_hW(t_{k-1})}{\tau},
\end{equation*}
and with $P_h\dot W(t_0)=0$. Using Gr\"{u}nwald-Letnikov formula \eqref{eqn:GL}, the
term ${_0I_t^\gamma} P_h\dot{W}(t_{n})$ is approximated by
\[
{_0I_t^\gamma} P_h\dot{W}(t_{n})\approx
\tau^{\gamma} \sum_{k=1}^{n} \beta_{n-k}^{(-\gamma)}
\Big [
\sum_{\ell=1}^{L} \gamma_{\ell}^\frac{1}{2}P_h e_{\ell}  \frac{\Delta \beta_{\ell}^k}{\tau}  \Big ].
\]

It is known that for a quasi-uniform triangulation $\mathcal{T}_h$, it is sufficient to take $L\geq N_h$ in the
truncation \cite{Yan:2005}, with $N_h$ being the FEM degree of freedom, in order to preserve the desired
convergence. The truncation number $L=N_h$ is employed in our numerical experiments.

Below we present numerical results on the unit interval $D=(0,1)$, and fix $u_0=0$.
The eigenfunctions $e_{\ell}(x)$ are given by $\sqrt{2}\sin(\ell \pi x)$, $\ell=1,2,\dots$,
and let $\gamma_{\ell}= \ell^{-m}, m \geq 0$. Thus, the borderline  for trace class noise is
$m=1$, and $m=0$ corresponds roughly to $s=-1$. The domain $D=(0,1)$ is divided into $M$ subintervals of length
$h=1/M$, and the time step size $\tau$ is fixed at $\tau=t/N$, where $t$ is the time of interest. To check
the convergence rate, we choose the $L^2(\Omega;H)$ norm for strong convergence, and $\Phi(u(t))=\int_D u(t)^2\d x$ for weak
convergence. All the expected values are computed with 100 trajectories.

\subsection{Numerical results for temporal convergence}

In this set of experiments, we fix the final time $t$ at $t=0.01$ and $M=100$. The reference solution is
computed with a much finer temporal mesh with $N=3200$. The numerical results for various
fractional orders $\alpha$ and $\gamma$ and trace class noise (with $m=2$) are given in Table
\ref{tab:gamma-odd-time-new}. In the table, the numbers in the bracket in the last column denote the
theoretical rates predicted by Theorems \ref{thm:strong} and \ref{thm:weak} (and Remarks \ref{rmk:strong}
and \ref{rmk:weak}), and for each $\alpha$ value, the first and second rows give the strong and weak
errors, respectively. When $s=0$, the theoretical rate is nearly $O(\tau^{\min(\alpha+\gamma-\frac{1}{2},1)})$ and $O
(\tau^{\min(\alpha+\gamma,1)})$ (up to possibly a logarithmic factor) in the strong and weak sense,
respectively. Overall, the empirical rates agree well with the theoretical ones. The convergence rate improves steadily
as the fractional orders $\alpha$ and $\gamma$ increase, due to the improved temporal solution regularity.
Further, note that the weak rate is generally not twice the strong one, unlike the case for the stochastic heat equation.

\begin{table}[htb!]
\centering
\caption{The $L^2(\Omega;H)$-error for trace class  noise $(m=2)$
at $t=0.01$.}\label{tab:gamma-odd-time-new}
\begin{tabular}{c|c|ccccc|c}
\hline
$\gamma$&$\alpha\backslash N$ &  40  &  80 & 160 & 320 &  640 & rate\\
\hline
0.3  & 0.2 & 6.68e-1 & 6.38e-1 & 6.07e-1 & 5.55e-1 & 4.98e-1 & 0.10 ($--$) \\
     &     & 3.82e-1 & 3.70e-1 & 3.26e-1 & 2.79e-1 & 2.23e-1 & 0.19 ($--$) \\
     & 0.4 & 1.19e-1 & 9.85e-2 & 8.14e-2 & 6.86e-2 & 5.83e-2 & 0.25 (0.20) \\
     &     & 1.76e-2 & 1.37e-2 & 7.58e-3 & 6.30e-3 & 3.37e-3 & 0.60 (0.70) \\
     & 0.6 & 1.05e-2 & 7.53e-3 & 5.13e-3 & 3.80e-3 & 2.58e-3 & 0.50 (0.40) \\
     &     & 2.65e-4 & 2.22e-4 & 3.71e-5 & 1.05e-5 & 1.49e-5 & 1.03 (0.90) \\
     & 0.8 & 1.22e-3 & 8.46e-4 & 5.61e-4 & 3.67e-4 & 2.36e-4 & 0.59 (0.60) \\
     &     & 2.64e-5 & 1.56e-5 & 7.95e-6 & 3.49e-6 & 3.31e-7 & 1.40 (1.00) \\
\hline
0.5  & 0.2 & 5.86e-2 & 5.10e-2 & 4.16e-2 & 3.34e-2 & 2.78e-2 & 0.26 (0.20) \\
     &     & 4.23e-3 & 3.36e-3 & 2.65e-3 & 1.48e-3 & 7.26e-4 & 0.63 (0.70) \\
     & 0.4 & 9.96e-3 & 7.16e-3 & 4.98e-3 & 3.81e-3 & 2.64e-3 & 0.47 (0.40) \\
     &     & 3.90e-4 & 2.33e-4 & 1.13e-4 & 5.75e-5 & 1.64e-5 & 1.14 (0.90) \\
     & 0.6 & 9.97e-4 & 6.73e-4 & 4.43e-4 & 2.97e-4 & 1.85e-4 & 0.60 (0.60) \\
     &     & 2.79e-5 & 1.33e-5 & 4.59e-6 & 3.19e-6 & 1.67e-6 & 1.01 (1.00) \\
     & 0.8 & 6.00e-4 & 3.90e-4 & 2.30e-4 & 1.37e-4 & 8.29e-5 & 0.71 (0.80) \\
     &     & 2.41e-6 & 1.64e-6 & 8.49e-7 & 2.74e-7 & 2.16e-7 & 0.87 (1.00) \\
\hline
0.7  & 0.2 & 4.95e-3 & 4.17e-3 & 3.21e-3 & 2.33e-3 & 1.75e-3 & 0.37 (0.40)\\
     &     & 5.45e-5 & 4.27e-5 & 2.53e-5 & 1.68e-5 & 2.11e-5 & 0.34 (0.90)\\
     & 0.4 & 4.39e-4 & 2.98e-4 & 2.15e-4 & 1.51e-4 & 1.08e-4 & 0.50 (0.60)\\
     &     & 5.50e-6 & 2.81e-6 & 1.32e-6 & 7.69e-7 & 4.36e-7 & 0.91 (1.00)\\
     & 0.6 & 4.39e-4 & 3.12e-4 & 2.06e-4 & 1.31e-4 & 7.20e-5 & 0.65 (0.80)\\
     &     & 1.90e-6 & 1.56e-6 & 5.88e-7 & 3.78e-7 & 2.03e-7 & 0.80 (1.00)\\
     & 0.8 & 2.44e-4 & 1.40e-4 & 6.75e-5 & 3.67e-5 & 1.93e-5 & 0.91 (1.00)\\
     &     & 4.36e-7 & 1.37e-7 & 1.16e-7 & 5.97e-8 & 1.83e-8 & 1.14 (1.00)\\
\hline
0.9  & 0.2 & 1.38e-4 & 7.94e-5 & 4.83e-5 & 2.69e-5 & 1.56e-5 & 0.78 (0.60)\\
     &     & 1.67e-6 & 8.28e-7 & 4.38e-7 & 2.35e-7 & 1.28e-7 & 0.92 (1.00)\\
     & 0.4 & 2.90e-4 & 2.00e-4 & 1.31e-4 & 8.44e-5 & 5.50e-5 & 0.59 (0.80)\\
     &     & 8.79e-7 & 2.83e-7 & 1.50e-7 & 3.12e-9 & 6.37e-8 & 0.94 (1.00)\\
     & 0.6 & 1.85e-4 & 1.03e-4 & 5.35e-5 & 3.07e-5 & 1.76e-5 & 0.84 (1.00)\\
     &     & 3.57e-7 & 1.53e-7 & 7.42e-8 & 4.94e-8 & 2.46e-8 & 0.96 (1.00)\\
     & 0.8 & 9.94e-5 & 5.28e-5 & 2.50e-5 & 1.28e-5 & 5.72e-6 & 1.02 (1.00)\\
     &     & 1.76e-7 & 1.12e-7 & 4.05e-8 & 1.50e-8 & 8.36e-9 & 1.09 (1.00)\\
\hline
\end{tabular}
\end{table}

By Theorems \ref{thm:strong} and \ref{thm:weak}, the regularity of $W(t)$
also affects the temporal convergence via the term $\|A^{-\frac{s}{2}}
\|_{\mathcal{L}_0^2}$: the convergence for white noise is slower than that for trace class
noise, cf. Table \ref{tab:m-time-new}. By the well-known asymptotics $O(j^2)$ of the 1D
negative Laplacian $A$, $m=0$ corresponds to roughly $s=1$, and thus Theorems \ref{thm:strong}
and \ref{thm:weak} yield the theoretical rates $O(\tau^{\min(\frac{\alpha}{2}+\gamma-\frac12,1)})$
and $O(\tau^{\min(\alpha+2\gamma-1,1)})$ in the strong and weak convergence, respectively; see Table
\ref{tab:m-time-new}. The empirical rates are slightly higher than the theoretical one.
Further, noise regularity (indicated by $m$) beyond trace class affects very little the temporal
convergence; see the results for $m=2,3$ in Table \ref{tab:m-time-new}.

\begin{table}[htb!]
\centering
\caption{The $L^2(\Omega;H)$-error at $t=0.01$ with $\gamma=0.4$ and
noise regularity index $m$.}\label{tab:m-time-new}
\begin{tabular}{c|c|ccccc|c}
\hline
$m$&$\alpha\backslash N$ &  40 & 80 & 160 & 320 & 640 & rate\\
\hline
0    & 0.3 & 1.13e-1 & 1.02e-1 & 8.91e-2 & 7.33e-2 & 6.06e-2 & 0.22 (0.05)\\
     &     & 8.12e-3 & 6.36e-3 & 6.71e-3 & 4.74e-3 & 5.50e-3 & 0.14 (0.10)\\
     & 0.5 & 2.21e-2 & 1.83e-2 & 1.48e-2 & 1.15e-2 & 8.41e-3 & 0.34 (0.15)\\
     &     & 1.09e-3 & 7.45e-4 & 4.78e-4 & 3.18e-4 & 1.94e-4 & 0.62 (0.30)\\
     & 0.7 & 4.47e-3 & 3.30e-3 & 2.36e-3 & 1.67e-3 & 1.11e-3 & 0.50 (0.25)\\
     &     & 8.24e-5 & 4.78e-5 & 3.19e-5 & 1.90e-5 & 8.09e-6 & 0.83 (0.50)\\
     & 0.9 & 1.66e-3 & 1.11e-3 & 7.17e-4 & 4.53e-4 & 2.75e-4 & 0.64 (0.30)\\
     &     & 7.49e-6 & 6.08e-6 & 2.11e-6 & 1.25e-6 & 6.04e-7 & 0.90 (0.70)\\
\hline
1    & 0.3 & 9.76e-2 & 8.46e-2 & 7.08e-2 & 6.23e-2 & 5.06e-2 & 0.23 (0.20)\\
     &     & 9.32e-3 & 7.89e-3 & 6.72e-3 & 4.52e-3 & 2.74e-3 & 0.44 (0.70)\\
     & 0.5 & 1.37e-2 & 1.03e-2 & 7.82e-3 & 5.79e-3 & 4.08e-3 & 0.43 (0.40)\\
     &     & 5.50e-4 & 3.66e-4 & 1.33e-4 & 7.44e-5 & 4.19e-5 & 0.92 (0.90)\\
     & 0.7 & 1.84e-3 & 1.23e-3 & 8.28e-4 & 5.44e-4 & 3.41e-4 & 0.60 (0.60)\\
     &     & 4.09e-5 & 1.90e-5 & 6.90e-6 & 2.82e-6 & 2.68e-6 & 0.98 (1.00)\\
     & 0.9 & 9.01e-4 & 5.50e-4 & 3.29e-4 & 2.03e-4 & 1.18e-4 & 0.73 (0.80)\\
     &     & 5.54e-6 & 2.37e-6 & 1.55e-6 & 5.69e-7 & 1.95e-7 & 1.20 (1.00)\\
\hline
2    & 0.3 & 9.04e-2 & 7.83e-2 & 6.85e-2 & 5.82e-2 & 4.48e-2 & 0.25 (0.20)\\
     &     & 7.91e-3 & 5.70e-3 & 2.47e-3 & 6.43e-4 & 7.08e-4 & 0.87 (0.70)\\
     & 0.5 & 1.10e-2 & 8.10e-3 & 5.76e-3 & 4.07e-3 & 2.82e-3 & 0.49 (0.40)\\
     &     & 2.70e-4 & 1.93e-4 & 1.02e-4 & 5.61e-5 & 1.82e-5 & 0.97 (0.90)\\
     & 0.7 & 1.03e-3 & 7.44e-4 & 5.11e-4 & 3.27e-4 & 2.01e-4 & 0.59 (0.60)\\
     &     & 1.49e-5 & 2.73e-6 & 1.70e-6 & 1.25e-6 & 8.04e-7 & 1.05 (1.00)\\
     & 0.9 & 7.15e-4 & 4.22e-4 & 2.60e-4 & 1.48e-4 & 8.82e-5 & 0.75 (0.80)\\
     &     & 4.34e-6 & 2.50e-6 & 1.34e-6 & 3.97e-7 & 2.89e-7 & 0.97 (1.00)\\
\hline
3    & 0.3 & 9.22e-2 & 7.53e-2 & 6.14e-2 & 5.44e-2 & 4.05e-2 & 0.29 (0.20)\\
     &     & 9.75e-3 & 6.80e-3 & 4.67e-3 & 2.39e-3 & 1.47e-3 & 0.68 (0.70)\\
     & 0.5 & 1.00e-2 & 6.86e-3 & 5.06e-3 & 3.26e-3 & 2.07e-3 & 0.57 (0.40)\\
     &     & 2.92e-4 & 1.17e-4 & 4.47e-5 & 2.53e-5 & 2.61e-5 & 0.87 (0.90)\\
     & 0.7 & 9.06e-4 & 6.16e-4 & 4.21e-4 & 2.85e-4 & 1.81e-4 & 0.57 (0.60)\\
     &     & 1.81e-5 & 1.04e-5 & 3.52e-6 & 2.28e-6 & 6.66e-7 & 1.19 (1.00)\\
     & 0.9 & 6.59e-4 & 3.99e-4 & 2.23e-4 & 1.38e-4 & 7.36e-5 & 0.79 (0.80)\\
     &     & 3.13e-6 & 1.08e-6 & 6.53e-7 & 5.00e-7 & 1.46e-7 & 1.10 (1.00)\\
\hline
\end{tabular}
\end{table}

\subsection{Numerical results for spatial convergence} Next we examine the spatial convergence. Here,
we fix the number $M$ of time steps at $M=200$ and the final time $t$ at $t=1$, and compute the
reference solution at $N=480$. The numerical results are given in Table \ref{tab:gamma-odd-space} for
trace class noise (with $m=2$) with various $\alpha$ and $\gamma$ values. A
convergence rate $O(h^2)$ is consistently observed for all combinations, concurring Theorems
\ref{thm:strong} and \ref{thm:weak}.

The influence of the noise regularity (indicated by $m$) on the convergence rates is shown in Table \ref{tab:m-space}.
It is observed that for $m=2$, the weak and strong rates saturate at $O(h^2)$, due to the use of
linear finite elements, despite the improved noise regularity. However, it deteriorates when the
noise regularity is lowered to the borderline of trace class (i.e., $m=1$) or white noise (i.e.,
$m=0$): for $m=1$, the strong and weak rates are predicted to be $O(h^{2-\frac{1-2\gamma}{\alpha}})$
and $O(h^2)$, respectively; and for $m=0$, they are $O(h^{1-\frac{1-2\gamma}{\alpha}})$ and
$O(h^{1-\min(1-2\gamma-\frac{\alpha}{2},0)})$, respectively. The empirical rates are much higher
than the theoretical ones when $m=0$, indicating an interesting superconvergence
phenomenon, whose precise mechanism remains to be ascertained.

\begin{table}[htb!]
\centering
\caption{The $L^2(\Omega;H)$-error with trace class noise ($m=2$)
at $t=1$.}\label{tab:gamma-odd-space}
\begin{tabular}{c|c|ccccc|c}
\hline
$\gamma$&$\alpha\backslash M$ &    10 &  20  &  40 & 80 & 160 &  rate\\
\hline
0.2  & 0.3 & 7.65e-3 & 2.01e-3 & 5.15e-4 & 1.27e-4 & 2.96e-5 & 2.00 ($--$)\\
     &     & 4.60e-3 & 1.17e-3 & 2.92e-4 & 7.17e-5 & 1.64e-5 & 2.03 ($--$)\\
     & 0.5 & 6.07e-3 & 1.63e-3 & 4.21e-4 & 1.05e-4 & 2.44e-5 & 1.98 (2.00)\\
     &     & 2.48e-3 & 6.33e-4 & 1.58e-4 & 3.88e-5 & 8.87e-6 & 2.03 (2.00)\\
     & 0.7 & 4.82e-3 & 1.32e-3 & 3.46e-4 & 8.74e-5 & 2.03e-5 & 1.97 (2.00)\\
     &     & 1.30e-3 & 3.35e-4 & 8.40e-5 & 2.06e-5 & 4.71e-6 & 2.02 (2.00)\\
     & 0.9 & 4.05e-3 & 1.12e-3 & 2.96e-4 & 7.52e-5 & 1.76e-5 & 1.96 (2.00)\\
     &     & 8.79e-4 & 2.25e-4 & 5.65e-5 & 1.38e-5 & 3.17e-6 & 2.02 (2.00)\\
\hline
0.6  & 0.3 & 2.39e-3 & 6.25e-4 & 1.59e-4 & 3.93e-5 & 9.09e-6 & 2.01 (2.00)\\
     &     & 4.68e-4 & 1.19e-4 & 2.97e-5 & 7.29e-6 & 1.66e-6 & 2.03 (2.00)\\
     & 0.5 & 2.30e-3 & 6.02e-4 & 1.53e-4 & 3.80e-5 & 8.78e-6 & 2.00 (2.00)\\
     &     & 4.22e-4 & 1.07e-4 & 2.68e-5 & 6.58e-6 & 1.50e-6 & 2.03 (2.00)\\
     & 0.7 & 2.26e-3 & 5.92e-4 & 1.50e-4 & 3.73e-5 & 8.64e-6 & 2.00 (2.00)\\
     &     & 4.02e-4 & 1.02e-4 & 2.56e-5 & 6.27e-6 & 1.43e-6 & 2.03 (2.00)\\
     & 0.9 & 2.27e-3 & 5.94e-4 & 1.51e-4 & 3.75e-5 & 8.67e-6 & 2.00 (2.00)\\
     &     & 4.09e-4 & 1.04e-4 & 2.60e-5 & 6.37e-6 & 1.45e-6 & 2.03 (2.00)\\
\hline
\end{tabular}
\end{table}

\begin{table}[htb!]
\centering
\caption{The $L^2(\Omega;H)$-error at $t=1$ with $\gamma=0.4$, and noise regularity index $m$.}\label{tab:m-space}
\begin{tabular}{c|c|ccccc|c}
\hline
$m$&$\alpha\backslash M$ &    10 &  20  &  40 & 80 & 160 &  rate\\
\hline
0    & 0.3 & 9.58e-3 & 3.48e-3 & 1.25e-3 & 4.36e-4 & 1.38e-4 & 1.52 (0.33)\\
     &     & 1.62e-3 & 4.44e-4 & 1.15e-4 & 2.89e-5 & 6.69e-6 & 1.98 (0.95)\\
     & 0.5 & 9.20e-3 & 3.40e-3 & 1.23e-3 & 4.33e-4 & 1.37e-4 & 1.51 (0.60)\\
     &     & 1.26e-3 & 3.51e-4 & 9.22e-5 & 2.32e-5 & 5.38e-6 & 1.96 (1.00)\\
     & 0.7 & 8.75e-3 & 3.30e-3 & 1.21e-3 & 4.29e-4 & 1.36e-4 & 1.49 (0.72)\\
     &     & 1.02e-3 & 2.87e-4 & 7.60e-5 & 1.92e-5 & 4.47e-6 & 1.95 (1.00)\\
     & 0.9 & 8.27e-3 & 3.17e-3 & 1.19e-3 & 4.24e-4 & 1.36e-4 & 1.48 (0.78)\\
     &     & 9.13e-4 & 2.56e-4 & 6.81e-5 & 1.72e-5 & 4.02e-6 & 1.95 (1.00)\\
\hline
1    & 0.3 & 5.11e-3 & 1.48e-3 & 4.16e-4 & 1.12e-4 & 2.79e-5 & 1.87 (1.33)\\
     &     & 1.20e-3 & 3.11e-4 & 7.83e-5 & 1.92e-5 & 4.40e-6 & 2.02 (2.00)\\
     & 0.5 & 4.70e-3 & 1.38e-3 & 3.95e-4 & 1.07e-4 & 2.69e-5 & 1.86 (1.60)\\
     &     & 8.84e-4 & 2.29e-4 & 5.77e-5 & 1.41e-5 & 3.24e-6 & 2.02 (2.00)\\
     & 0.7 & 4.34e-3 & 1.30e-3 & 3.75e-4 & 1.03e-4 & 2.59e-5 & 1.84 (1.72)\\
     &     & 6.90e-4 & 1.79e-4 & 4.53e-5 & 1.11e-5 & 2.55e-6 & 2.01 (2.00)\\
     & 0.9 & 4.09e-3 & 1.23e-3 & 3.59e-4 & 9.95e-5 & 2.51e-5 & 1.83 (1.78)\\
     &     & 6.27e-4 & 1.63e-4 & 4.11e-5 & 1.01e-5 & 2.31e-6 & 2.02 (2.00)\\
\hline
2    & 0.3 & 3.62e-3 & 9.49e-4 & 2.41e-4 & 6.00e-5 & 1.38e-5 & 2.00 (2.00)\\
     &     & 1.06e-3 & 2.71e-4 & 6.79e-5 & 1.66e-5 & 3.80e-6 & 2.03 (2.00)\\
     & 0.5 & 3.17e-3 & 8.39e-4 & 2.15e-4 & 5.35e-5 & 1.23e-5 & 2.00 (2.00)\\
     &     & 7.59e-4 & 1.93e-4 & 4.83e-5 & 1.18e-5 & 2.70e-6 & 2.03 (2.00)\\
     & 0.7 & 2.87e-3 & 7.63e-4 & 1.96e-4 & 4.89e-5 & 1.13e-5 & 1.99 (2.00)\\
     &     & 5.83e-4 & 1.48e-4 & 3.72e-5 & 9.12e-6 & 2.08e-6 & 2.03 (2.00)\\
     & 0.9 & 2.74e-3 & 7.28e-4 & 1.87e-4 & 4.67e-5 & 1.08e-5 & 1.99 (2.00)\\
     &     & 5.35e-4 & 1.36e-4 & 3.41e-5 & 8.36e-6 & 1.91e-6 & 2.03 (2.00)\\
\hline
\end{tabular}
\end{table}

In summary, all the numerical results indicate that the convergence rates are nearly sharp.
However, the rate in either strong or weak norm is limited to $O(h^2)$ and $O(\tau)$, and
it is of much interest to design high-order numerical schemes (in strong and / or weak
sense). The high-order convergence are expected from high regularity for $\alpha+\gamma>1$
(or $\alpha+\gamma>3/2$).


\bibliographystyle{abbrv}
\bibliography{frac}

\appendix

\section{Regularity theory}\label{app:reg}
In this appendix, we describe some regularity results for problem \eqref{eqn:sfde}.
First, we state a result on $\bar E(t)$.
\begin{lemma} \label{lem:solop1}
Let condition \eqref{ass:alpha} hold, and let {\rm(}with $\epsilon>0$ small{\rm)}
\begin{equation}\label{eqn:kappa}
  \kappa = \left\{\begin{array}{ll}
       2, &\quad \mbox{ if }1/2<\gamma\leq 1,\\
      2-\epsilon,  &\quad \mbox{ if } \gamma =1/2,\\
      2-\frac{1-2\gamma}{\alpha},     &\quad \mbox{ if } 0\leq \gamma <1/2.
  \end{array}\right.
\end{equation}
Then there holds
\begin{equation*}
    \|  \bar{E} (s)\|_{L^2(0,t;\dot H^ {\kappa}(D))} \leq ct^{ (2-\kappa)\frac{\alpha}{2}+\gamma - \frac{1}{2}}.
\end{equation*}
\end{lemma}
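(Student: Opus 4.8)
The plan is to diagonalize the spatial operator, reducing the asserted bound on the operator norm $\|\bar E(\cdot)\|_{\mathcal L(L^2(D);L^2(0,t;\dot H^\kappa(D)))}$ to a scalar (spectral) estimate, and then to apply the smoothing property of Lemma~\ref{lem:solop} with two different pairs of regularity indices. Since $A=-\Delta$ with zero Dirichlet data has a complete orthonormal eigensystem $\{(\lambda_j,\phi_j)\}_{j\ge1}$, for $v=\sum_j v_j\phi_j\in L^2(D)$ one has $\bar E(s)v=\sum_j\bar E_{\lambda_j}(s)v_j\phi_j$, where $\bar E_\lambda(s)=\frac{1}{2\pi\rm i}\int_\Gamma e^{zs}z^{-\gamma}(z^\alpha+\lambda)^{-1}\,\d z$ is the scalar analogue of $\bar E(s)$ and $|\bar E_{\lambda_j}(s)|=\|\bar E(s)\phi_j\|$. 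Hence
\[
\|\bar E(\cdot)v\|_{L^2(0,t;\dot H^\kappa(D))}^2=\sum_j\lambda_j^\kappa\Big(\int_0^t|\bar E_{\lambda_j}(s)|^2\,\d s\Big)|v_j|^2\le\Big(\sup_{\lambda>0}\lambda^\kappa\int_0^t|\bar E_\lambda(s)|^2\,\d s\Big)\|v\|^2,
\]
so it suffices to prove $\sup_{\lambda>0}\lambda^\kappa\int_0^t|\bar E_\lambda(s)|^2\,\d s\le c\,t^{(2-\kappa)\alpha+2\gamma-1}$. Testing Lemma~\ref{lem:solop} against a unit eigenfunction with eigenvalue $\lambda$ yields the two bounds
\[
\lambda^{\kappa/2}|\bar E_\lambda(s)|\le c\,s^{-\frac{\alpha\kappa}{2}+\alpha+\gamma-1}\ \ (p=\kappa,\ q=0),\qquad |\bar E_\lambda(s)|\le c\,\lambda^{-1}s^{\gamma-1}\ \ (p=0,\ q=-2),
\]
both admissible since $0\le\kappa\le2$ under \eqref{ass:alpha} and \eqref{eqn:kappa}. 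When $\gamma\ge1/2$ the choice \eqref{eqn:kappa} gives $\kappa<2-\tfrac{1-2\gamma}{\alpha}$ strictly; squaring the first bound gives $\lambda^\kappa|\bar E_\lambda(s)|^2\le c\,s^{(2-\kappa)\alpha+2\gamma-2}$ with exponent strictly above $-1$, and integrating over $(0,t)$ gives the claim with a $\lambda$-independent constant.

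The remaining case $0\le\gamma<1/2$ is where some care is needed: then $\kappa=2-\tfrac{1-2\gamma}{\alpha}$, the target exponent $(2-\kappa)\alpha+2\gamma-1$ equals $0$, and the previous estimate just fails because the power of $s$ is exactly $-1$. The plan is to split the time integral at the diffusive scale $s_0:=\lambda^{-1/\alpha}$. On $(0,\min(t,s_0))$ I would use the $p=q=0$ instance of Lemma~\ref{lem:solop}, namely $|\bar E_\lambda(s)|\le c\,s^{\alpha+\gamma-1}$, whose square is integrable near $0$ because $\alpha+\gamma>1/2$ by \eqref{ass:alpha}; this gives $\lambda^\kappa\int_0^{\min(t,s_0)}|\bar E_\lambda(s)|^2\,\d s\le c\,\lambda^\kappa s_0^{2\alpha+2\gamma-1}=c\,\lambda^{\kappa-2+\frac{1-2\gamma}{\alpha}}=c$, the exponent of $\lambda$ vanishing exactly by the definition of $\kappa$. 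On the remaining interval $(\min(t,s_0),t)$, nonempty only when $s_0<t$, I would use the second bound $|\bar E_\lambda(s)|\le c\,\lambda^{-1}s^{\gamma-1}$, whose square is integrable near $+\infty$ because $\gamma<1/2$; this gives $\lambda^\kappa\int_{s_0}^t|\bar E_\lambda(s)|^2\,\d s\le c\,\lambda^{\kappa-2}\int_{s_0}^\infty s^{2\gamma-2}\,\d s=c\,\lambda^{\kappa-2}s_0^{2\gamma-1}=c\,\lambda^{\kappa-2+\frac{1-2\gamma}{\alpha}}=c$. Summing the two contributions yields $\sup_{\lambda>0}\lambda^\kappa\int_0^t|\bar E_\lambda(s)|^2\,\d s\le c=c\,t^{(2-\kappa)\alpha+2\gamma-1}$, and taking square roots completes the proof.

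The main obstacle is precisely this borderline regime $\gamma<1/2$ with $\kappa=2-\tfrac{1-2\gamma}{\alpha}$: there the single smoothing estimate of Lemma~\ref{lem:solop} at the top regularity $p=\kappa$ behaves like $s^{-1/2}$ in time near $s=0$ and so is not square-integrable, so one genuinely has to combine it with the complementary low-regularity, fast-decay estimate $|\bar E_\lambda(s)|\le c\,\lambda^{-1}s^{\gamma-1}$ and split at $s_0=\lambda^{-1/\alpha}$; the definition of $\kappa$ in \eqref{eqn:kappa} is arranged so that both pieces then carry exactly the zeroth power of $\lambda$, which is what makes the bound uniform in $\lambda$ (and correctly matches $t^0$). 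Everything else is routine bookkeeping with exponents.
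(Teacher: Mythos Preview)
Your argument is correct. For $\gamma\ge 1/2$ it coincides with the paper's proof: both square the smoothing bound $\|A^{\kappa/2}\bar E(s)\|\le cs^{(1-\kappa/2)\alpha+\gamma-1}$ from Lemma~\ref{lem:solop} and integrate, the exponent being strictly above $-1$ by the choice of $\kappa$. Where you differ is in the borderline regime $0\le\gamma<1/2$, $\kappa=2-\tfrac{1-2\gamma}{\alpha}$: the paper merely writes ``the assertion follows from direct computation,'' while you supply the mechanism---the spectral reduction to $\sup_{\lambda>0}\lambda^\kappa\int_0^t|\bar E_\lambda(s)|^2\,\d s$ and the split at $s_0=\lambda^{-1/\alpha}$, pairing the short-time bound $|\bar E_\lambda(s)|\le cs^{\alpha+\gamma-1}$ with the long-time bound $|\bar E_\lambda(s)|\le c\lambda^{-1}s^{\gamma-1}$. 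This extra step is genuinely necessary: in that regime the pointwise operator norm obeys $\|A^{\kappa/2}\bar E(s)\|\asymp s^{-1/2}$ (the supremum over $\lambda$ is essentially attained at $\lambda\sim s^{-\alpha}$), so $\int_0^t\|A^{\kappa/2}\bar E(s)\|^2\,\d s$ itself diverges and the paper's one-line route cannot close. Your reading of the norm as the operator norm $L^2(D)\to L^2(0,t;\dot H^\kappa(D))$---which exchanges the $\sup_\lambda$ and the time integral---is precisely the quantity that stays finite, and it is all that is needed downstream in Theorem~\ref{thm:regularity}, where one sums over Hilbert--Schmidt directions and may interchange that sum with the time integral.
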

\begin{proof}
By Lemma \ref{lem:solop}, for any $\kappa\in[0,2]$, there holds
$\|A^\frac{\kappa}{2}\bar E(t)\| \leq ct^{(1-\frac\kappa2)\alpha+\gamma-1}$, and consequently
\begin{align*}
  \int_{0}^{t}  \| A^\frac{\kappa}{2} \bar E(s) \|^2 \d s & \leq c
   \int_{0}^{t}s^{(2-\kappa)\alpha+2\gamma-2} \d s.
\end{align*}
Under Assumption \ref{ass:alpha} and the choice of the exponent $\kappa$ in \eqref{eqn:kappa},
$(2-\kappa)\alpha+2\gamma-2>-1$ (except for the case $0\leq \gamma<1/2$ and $\kappa=2-\frac{1-2
\gamma}{\alpha}$)), thus we obtain the desired result upon integration. In the exceptional
case, $(2-\kappa)\alpha+2\gamma-2=-1$, and the assertion follows from direct computation.
\end{proof}

\begin{remark}
For $\gamma\in(1/2,1]$, $\bar E(t)$ has a maximum order two smoothing in space. However, for $\gamma\in[0,1/2)$,
$\kappa$ is restricted to $[0,2-\frac{1-2\gamma}{\alpha}]$. This again reflects a certain limited
smoothing property of $\bar E(t)$, and also restricts the best possible strong and weak spatial convergence rates.
\end{remark}

Now we can state the spatial regularity of the mild solution \eqref{eqn:mild}.
\begin{theorem} \label{thm:regularity}
Let condition \eqref{ass:alpha} hold, and $\kappa$ be defined by \eqref{eqn:kappa},
and $r,q\in\mathbb{R}$ with $0\leq r-q\leq 2$. For $u_{0}\in L^p(\Omega;
\dot H^q(D))$, let $u(t)$ be  the mild  solution  of problem \eqref{eqn:sfde}
defined in \eqref{eqn:mild}. Then there holds
\begin{equation*}
\|u(t) \|_{L^p(\Omega; \dot{H}^r(D))} \leq
ct^{-\alpha \frac{r-q}{2}} \| u_{0} \|_{L^p( \Omega;\dot H^q(D))}
+ c t^{(1-\frac{\kappa}{2})\alpha+\gamma-\frac12}  \| A^\frac{r-\kappa}{2}\|_{\mathcal{L}_2^0}.
\end{equation*}
\end{theorem}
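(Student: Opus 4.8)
The plan is to split the mild solution \eqref{eqn:mild} into the homogeneous part $E(t)u_0$ and the stochastic convolution $\int_0^t\bar E(t-s)\,\d W(s)$, and to estimate the two contributions to $\|u(t)\|_{L^p(\Omega;\dot H^r(D))}$ separately by the triangle inequality. For the homogeneous part, I would apply Lemma \ref{lem:solop} with $p=r$ and $q=q$ (the hypothesis $0\leq r-q\leq 2$ is exactly the admissible range), which gives $|E(t)u_0|_r\leq ct^{-\alpha(r-q)/2}|u_0|_q$ pointwise in $\omega$; taking the $L^p(\Omega)$-norm yields the first term on the right-hand side.

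For the stochastic convolution, the key tool is Burkholder's inequality \eqref{burkholder0}: with $\Phi(s)=\bar E(t-s)$ acting on $H_0$, we have
\[
\Big\|\int_0^t\bar E(t-s)\,\d W(s)\Big\|_{L^p(\Omega;\dot H^r(D))}\leq c\Big\|A^{r/2}\bar E(t-\cdot)\Big\|_{L^2(0,t;\mathcal{L}_2^0)}.
\]
I would then factor $A^{r/2}\bar E(s)=A^{(r-\kappa)/2}\cdot A^{\kappa/2}\bar E(s)$ and use the Hilbert--Schmidt operator-norm inequality $\|ST\|_{\mathcal{L}_2^0}\leq\|S\|_{L(H)}\|T\|_{\mathcal{L}_2^0}$ in the form
\[
\|A^{r/2}\bar E(s)\|_{\mathcal{L}_2^0}\leq \|A^{\kappa/2}\bar E(s)\|_{L(H)}\,\|A^{(r-\kappa)/2}\|_{\mathcal{L}_2^0}.
\]
Wait --- one must be careful about which factor is Hilbert--Schmidt. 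The correct splitting writes $A^{r/2}\bar E(s)=\big(A^{\kappa/2}\bar E(s)\big)A^{(r-\kappa)/2}$, noting that $\bar E(s)$ and powers of $A$ commute (both are functions of $A$ via the functional calculus / the contour representation \eqref{eqn:solop}), so the bounded operator is $A^{\kappa/2}\bar E(s)$, whose $L(H)$-norm is controlled by Lemma \ref{lem:solop} as $\|A^{\kappa/2}\bar E(s)\|\leq cs^{(1-\kappa/2)\alpha+\gamma-1}$, and the Hilbert--Schmidt factor is $A^{(r-\kappa)/2}$, giving the constant $\|A^{(r-\kappa)/2}\|_{\mathcal{L}_2^0}$ that appears in the statement.

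Putting these together, the stochastic convolution term is bounded by $c\|A^{(r-\kappa)/2}\|_{\mathcal{L}_2^0}\big(\int_0^t s^{((2-\kappa)\alpha+2\gamma-2)}\d s\big)^{1/2}$, and this is precisely $\|\bar E(s)\|_{L^2(0,t;\dot H^\kappa(D))}\,\|A^{(r-\kappa)/2}\|_{\mathcal{L}_2^0}$ up to the commutation, so Lemma \ref{lem:solop1} delivers the factor $t^{(1-\kappa/2)\alpha+\gamma-1/2}$. The main obstacle --- really the only subtle point --- is verifying that the choice of $\kappa$ in \eqref{eqn:kappa} makes the time integral $\int_0^t s^{(2-\kappa)\alpha+2\gamma-2}\d s$ finite (equivalently $(2-\kappa)\alpha+2\gamma-2>-1$, or the borderline equality handled by a logarithm absorbed into $\epsilon$), which is exactly the content of Lemma \ref{lem:solop1} under Assumption \eqref{ass:alpha}; so the proof reduces to invoking Lemma \ref{lem:solop}, Lemma \ref{lem:solop1}, and Burkholder's inequality \eqref{burkholder0}, then combining via the triangle inequality.
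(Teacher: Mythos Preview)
Your proposal is correct and follows essentially the same approach as the paper: split via the mild formulation, bound $E(t)u_0$ by Lemma~\ref{lem:solop}, bound the stochastic convolution by Burkholder's inequality \eqref{burkholder0} together with the factorization $A^{r/2}\bar E(s)=\big(A^{\kappa/2}\bar E(s)\big)A^{(r-\kappa)/2}$, and then invoke Lemma~\ref{lem:solop1}. Your discussion of the commutation and of which factor is Hilbert--Schmidt is in fact more careful than the paper's terse presentation.
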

\begin{proof}
By Lemma \ref{lem:solop}, it suffices to bound the stochastic integral.
By Burkholder's inequality \eqref{burkholder0},
\begin{equation*}
   \begin{aligned}
    \Big(\ee \| \int_{0}^{t} A^\frac{r}{2} \bar E (t-s) \,\d W(s) \|^p\Big)^{2/p} &\leq c \int_{0}^{t} \| A^\frac{r}{2}\bar E (s)  \|_{\mathcal{L}_2^0}^2 \, \d s
     \leq \|A^\frac{r-\kappa}{2}\|_{\mathcal{L}_2^0} \int_{0}^{t}  \|A^\frac{\kappa}{2}\bar E (s) \|  \d s,
   \end{aligned}
\end{equation*}
Then Lemma  \ref{lem:solop1}, the representation \eqref{eqn:mild} and the triangle inequality complete the proof.
 \end{proof}

To study the temporal regularity of the mild solution in \eqref{eqn:mild}, we need an elementary inequality.
\begin{lemma}\label{lem:est-int}
For $0\leq t_1<t_2$ and $\theta\in(1/2,3/2)$, then with $c=(3-2\theta)^{-\frac{1}{2}}(\theta-\frac{1}{2})^{-1}$, there holds
$
  \int_{t_1}^{t_2} \big(\int_0^{t_1}(t-s)^{2(\theta-2)}\d s\big)^\frac{1}{2}\d t \leq c(t_2-t_1)^{\theta-\frac{1}{2}}.
$
\end{lemma}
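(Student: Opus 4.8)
The plan is to evaluate the inner integral in closed form, bound it by a single power of $t-t_1$, take the square root, and then integrate the resulting power against $\d t$; the whole argument is an explicit computation, so the only thing needing care is the bookkeeping of signs.

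First, I would fix $t\in(t_1,t_2)$ and compute the inner integral via the substitution $u=t-s$, which turns $\int_0^{t_1}(t-s)^{2(\theta-2)}\d s$ into $\int_{t-t_1}^{t}u^{2\theta-4}\d u$. Since $\theta<3/2$, the exponent $2\theta-3$ of an antiderivative is negative, so this equals $\frac{1}{3-2\theta}\big((t-t_1)^{2\theta-3}-t^{2\theta-3}\big)$. Because the map $x\mapsto x^{2\theta-3}$ is decreasing on $(0,\infty)$ (again as $2\theta-3<0$) and $0<t-t_1\le t$, the subtracted term $t^{2\theta-3}$ is nonnegative and may be dropped, giving the uniform bound $\int_0^{t_1}(t-s)^{2(\theta-2)}\d s\le\frac{1}{3-2\theta}(t-t_1)^{2\theta-3}$. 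Note that one cannot simply extend the $s$-range to $[0,t)$, since $2\theta-4<-1$ makes the integrand non-integrable near $s=t$; the explicit evaluation above is what supplies the sharp constant.

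Taking square roots yields $\big(\int_0^{t_1}(t-s)^{2(\theta-2)}\d s\big)^{1/2}\le(3-2\theta)^{-1/2}(t-t_1)^{\theta-3/2}$. I would then integrate this over $t\in(t_1,t_2)$: since $\theta>1/2$, the exponent $\theta-3/2>-1$, so $\int_{t_1}^{t_2}(t-t_1)^{\theta-3/2}\d t=(\theta-\tfrac12)^{-1}(t_2-t_1)^{\theta-1/2}$ is finite, and collecting the two constants gives exactly $c(t_2-t_1)^{\theta-1/2}$ with $c=(3-2\theta)^{-1/2}(\theta-\tfrac12)^{-1}$.

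There is no genuine obstacle here; the argument just hinges on the two endpoint restrictions playing complementary roles: $\theta<3/2$ makes the inner antiderivative exponent negative (so the closed-form evaluation and the dropping of $t^{2\theta-3}$ are legitimate), while $\theta>1/2$ is precisely what makes the outer integral converge and produces the nonnegative final exponent $\theta-\tfrac12$.
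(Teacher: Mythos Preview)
Your proof is correct and follows essentially the same approach as the paper's own proof: the paper also bounds the inner integral by $(3-2\theta)^{-1}(t-t_1)^{2\theta-3}$ via direct computation and then integrates the square root in $t$. You have simply spelled out the substitution and the sign bookkeeping that the paper leaves as ``straightforward computation.''
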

\begin{proof}
Since $\theta\in(1/2,3/2)$, i.e., $3-2\theta>0$, straightforward computation gives for  $t> t_1$
$\int_0^{t_1}(t-s)^{2(\theta-2)} \d s \leq (3-2\theta)^{-1}(t-t_1)^{2\theta-3}$. Thus simple
computation gives
$\int_{t_1}^{t_2} \Big(\int_0^{t_1}(t-s)^{2(\theta-2)}\d s\Big)^\frac{1}{2}\d t
\leq c(t_2-t_1)^{\theta-\frac{1}{2}}$, completing the proof.
\end{proof}

Now we can state the temporal H\"{o}lder regularity of the mild solution in \eqref{eqn:mild}.
\begin{theorem} \label{thm:time_regularity}
Let  condition \eqref{ass:alpha} hold, and $\kappa$ be defined in \eqref{eqn:kappa}. Let $u(t)$ be defined
in \eqref{eqn:mild}. Let $q,r\in [0,2]$ with $0\leq r-q \leq 2$, and $s\in[0,1]$ with $\max(0,
\frac{2(\alpha+\gamma)-3}{\alpha}+\epsilon)\leq r+s\leq \kappa$. Then for any $0<t_1<t_2<T$ and $p\geq 2$
and $u_0\in L^p(\Omega;\dot H^q(D))$, there holds
\begin{equation*}
\|u(t_2)-u(t_1)\|_{L^p(\Omega;\dot{H}^r(D))} \leq ct_1^{-(1+\alpha\frac{r-q}{2})}(t_2-t_1)\| u_0\|_{L^p(\Omega;\dot{H}^q(D))} +
 c (t_2-t_1)^{(1-\frac{r+s}{2})\alpha + \gamma -\frac{1}{2}}\|A^{-\frac{s}{2}}\|_{\mathcal{L}_2^0}.
\end{equation*}
\end{theorem}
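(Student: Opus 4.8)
The plan is to substitute the mild‑solution representation \eqref{eqn:mild} and split $u(t_2)-u(t_1)=\mathrm{I}+\mathrm{II}+\mathrm{III}$, where $\mathrm{I}=(E(t_2)-E(t_1))u_0$ is the deterministic part, $\mathrm{II}=\int_{t_1}^{t_2}\bar E(t_2-s)\,\d W(s)$ is the contribution of the ``new'' noise on $(t_1,t_2)$, and $\mathrm{III}=\int_0^{t_1}\big(\bar E(t_2-s)-\bar E(t_1-s)\big)\,\d W(s)$ is the increment of the solution operator acting on the noise up to $t_1$; each term is estimated in $L^p(\Omega;\dot H^r(D))$. For $\mathrm{I}$ I would use $E(t_2)-E(t_1)=\int_{t_1}^{t_2}E'(t)\,\d t$ together with the smoothing bound $|E'(t)v|_r\le ct^{-1-\alpha\frac{r-q}{2}}|v|_q$ for $0\le r-q\le 2$, which follows from the contour representation $E'(t)=\frac1{2\pi\mathrm i}\int_{\Gamma_{\theta,\delta}}e^{zt}z^{\alpha}(z^\alpha+A)^{-1}\,\d z$ exactly as in the proof of Lemma \ref{lem:solop}, now using $\|z^{\alpha} A^{\frac{r-q}{2}}(z^\alpha+A)^{-1}\|\le c|z|^{\alpha\frac{r-q}{2}}$ on $\Sigma_\theta$. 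Since $-1-\alpha\frac{r-q}{2}<0$, bounding $t^{-1-\alpha\frac{r-q}{2}}\le t_1^{-1-\alpha\frac{r-q}{2}}$ on $(t_1,t_2)$ and integrating yields $\|\mathrm{I}\|_{L^p(\Omega;\dot H^r)}\le ct_1^{-(1+\alpha\frac{r-q}{2})}(t_2-t_1)\|u_0\|_{L^p(\Omega;\dot H^q)}$, the first term of the claim.

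For $\mathrm{II}$, I would apply Burkholder's inequality \eqref{burkholder0} and then factor $A^{\frac r2}\bar E(\sigma)=A^{\frac{r+s}2}\bar E(\sigma)\,A^{-\frac s2}$, so that $\|\mathrm{II}\|_{L^p(\Omega;\dot H^r)}\le c\|A^{-\frac s2}\|_{\mathcal{L}_2^0}\big(\int_0^{t_2-t_1}\|A^{\frac{r+s}2}\bar E(\sigma)\|^2\,\d\sigma\big)^{1/2}$. Since $r+s\le\kappa$, Lemma \ref{lem:solop1} applied with the exponent $r+s$ in place of $\kappa$ bounds the remaining factor by $c(t_2-t_1)^{(1-\frac{r+s}2)\alpha+\gamma-\frac12}$, which is, up to the constant $\|A^{-\frac s2}\|_{\mathcal{L}_2^0}$, the second term of the claimed estimate.

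The term $\mathrm{III}$ is the crux. I would rewrite the increment as $\bar E(t_2-s)-\bar E(t_1-s)=\int_{t_1}^{t_2}\bar E'(t-s)\,\d t$ and invoke the stochastic Fubini theorem to get $\mathrm{III}=\int_{t_1}^{t_2}\big(\int_0^{t_1}\bar E'(t-s)\,\d W(s)\big)\,\d t$; its integrability hypothesis $\int_{t_1}^{t_2}\big(\int_0^{t_1}\|\bar E'(t-s)\|_{\mathcal{L}_2^0}^2\,\d s\big)^{1/2}\d t<\infty$ will be verified in passing from the smoothing bound below. Then by the triangle (Minkowski) inequality for Bochner integrals, $\|\mathrm{III}\|_{L^p(\Omega;\dot H^r)}\le\int_{t_1}^{t_2}\|\int_0^{t_1}\bar E'(t-s)\,\d W(s)\|_{L^p(\Omega;\dot H^r)}\,\d t$, and Burkholder's inequality \eqref{burkholder0}, the factorization $A^{\frac r2}\bar E'=A^{\frac{r+s}2}\bar E'\,A^{-\frac s2}$, and the estimate $\|A^{\frac{r+s}2}\bar E'(\sigma)\|\le c\sigma^{(1-\frac{r+s}2)\alpha+\gamma-2}$ from Lemma \ref{lem:solop} give $\|\int_0^{t_1}\bar E'(t-s)\,\d W(s)\|_{L^p(\Omega;\dot H^r)}\le c\|A^{-\frac s2}\|_{\mathcal{L}_2^0}\big(\int_0^{t_1}(t-s)^{2(\theta-2)}\,\d s\big)^{1/2}$ with $\theta:=(1-\frac{r+s}2)\alpha+\gamma$. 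The hypothesis $\max(0,\frac{2(\alpha+\gamma)-3}{\alpha}+\epsilon)\le r+s\le\kappa$ is tailored so that $\theta\in(1/2,3/2)$, whence Lemma \ref{lem:est-int} (with this $\theta$ and the same $t_1<t_2$) gives $\int_{t_1}^{t_2}\big(\int_0^{t_1}(t-s)^{2(\theta-2)}\,\d s\big)^{1/2}\d t\le c(t_2-t_1)^{\theta-\frac12}=c(t_2-t_1)^{(1-\frac{r+s}2)\alpha+\gamma-\frac12}$, which also supplies the finiteness needed for stochastic Fubini. Adding the three bounds gives the assertion. (Alternatively one can bypass Fubini and estimate $\int_0^{t_1}\big(\int_{t_1-s}^{t_2-s}\sigma^{\theta-2}\,\d\sigma\big)^2\,\d s$ directly by splitting the region of integration at $t_1-s=t_2-t_1$.)

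The main obstacle is term $\mathrm{III}$: recognising that the increment of $\bar E$ should be written as a time integral of $\bar E'$, legitimately exchanging that integral with the stochastic integral (verifying the integrability condition for stochastic Fubini), and then pulling the outer time integral out of the $L^p(\Omega;\dot H^r)$-norm so that the one‑dimensional inequality of Lemma \ref{lem:est-int} becomes applicable. The delicate point in the bookkeeping is that the constraints on $q,r,s$ in the statement are exactly those ensuring $0\le r-q\le2$ and $0\le r+s\le\kappa$ (for $\mathrm{I},\mathrm{II}$) and $\theta=(1-\frac{r+s}2)\alpha+\gamma\in(1/2,3/2)$ (for $\mathrm{III}$); the borderline value $r+s=\kappa$ with $\gamma<1/2$, where $\theta=\tfrac12$, needs a separate treatment, e.g.\ via the endpoint case already handled in Lemma \ref{lem:solop1}. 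All remaining steps are routine applications of Burkholder's inequality and the smoothing estimates of Lemmas \ref{lem:solop} and \ref{lem:solop1}.
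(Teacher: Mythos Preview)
Your proposal is correct and follows essentially the same route as the paper: the same three-term splitting (your $\mathrm{II}$ and $\mathrm{III}$ are the paper's $\mathrm{III}$ and $\mathrm{II}$, respectively), stochastic Fubini plus Lemma~\ref{lem:est-int} for the increment term, and Burkholder plus smoothing for the rest. You are slightly more explicit than the paper in two places: you derive the bound $|E'(t)v|_r\le ct^{-1-\alpha\frac{r-q}{2}}|v|_q$ from the contour representation (the paper just invokes Lemma~\ref{lem:solop}, which as stated covers $E$ and $\bar E,\bar E'$ but not $E'$), and you flag the borderline $r+s=\kappa$ with $\gamma<1/2$, where $\theta=1/2$ falls outside the range of Lemma~\ref{lem:est-int}; the paper glosses over this endpoint.
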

\begin{proof}
By the representation \eqref{eqn:mild}, we have the splitting
\begin{equation*}
\begin{aligned}
u(t_{2})  - u (t_{1}) =
 & (E(t_{2}) u_{0} - E (t_{1}) u_{0}) + \int_{0}^{t_{1}} (
\bar{E} (t_{2} -s) - \bar{E} (t_{1} -s) ) \d W(s) \\
&+ \int_{t_{1}}^{t_{2}} \bar{E} (t_{2} - s) \d W(s):= \mathrm{I} + \mathrm{II} + \mathrm{III}.
\end{aligned}
\end{equation*}
Next we bound the three terms separately. The first term can be bounded directly by Lemma \ref{lem:solop}.
Next, for $\alpha+\gamma\neq1$ (the case $\alpha+\gamma=1$ is similar), by
stochastic Fubini theorem \cite[Theorem 4.33]{DaPratoZabczyk:2014},
\begin{equation*}
  \begin{aligned}
    \mathrm{II} &= \int_0^{t_1}(\bar{E}(t_2-s)-\bar E(t_1-s))\d W(s)
        = \int_{t_1}^{t_2}\int_0^{t_1}\bar E'(t-s)\d W(s)\d t.
  \end{aligned}
\end{equation*}
Thus by Burkholder's inequality \eqref{burkholder0} and Lemma \ref{lem:solop}, we have (with $\theta=\alpha+\gamma-\frac{r+s}{2}\alpha$)
\begin{equation*}
  \begin{aligned}
    \|\mathrm{II}\|_{L^p(\Omega;\dot H^r(D))}
     &\leq  \int_{t_1}^{t_2}\|\int_0^{t_1}A^\frac{r}{2}\bar E'(t-s)\d W(s)\|_{L^p(\Omega;H)}\d t\\
     &\leq c\int_{t_1}^{t_2}\Big(\int_0^{t_1}\|A^\frac{r+s}{2} \bar E'(t-s)\|\|A^{-\frac{s}{2}} \|_{\mathcal{L}_2^0}\Big)^2\d s\Big)^\frac{1}{2}\d t\\
     &=c\int_{t_1}^{t_2}\Big(\int_0^{t_1} (t-s)^{2(\theta-2)}ds\Big)^\frac{1}{2}\d t\|A^{-\frac s{2}} \|_{\mathcal{L}_2^0}.
  \end{aligned}
\end{equation*}
For $1/2<\alpha+\gamma<3/2$, the
condition on $r+s$ ensures $\theta\in(1/2,3/2)$. Similarly, for $\alpha+\gamma\geq 3/2$ and
$r\in(\frac{2(\alpha+\gamma)-3}{\alpha},2]$, we have
also $\theta\in(1/2,3/2)$. Thus, Lemma \ref{lem:est-int} implies
\begin{equation*}
  \|\mathrm{II}\|_{L^p(\Omega;\dot H^r(D))} \leq c(t_2-t_1)^{\theta -\frac{1}{2}} \|A^{-\frac s2} \|_{\mathcal{L}_2^0}.
\end{equation*}
Last, by Burkholder's inequality \eqref{burkholder0} and Lemma \ref{lem:solop} with $p=r+s$ and $q=0$, we deduce
\begin{equation*}
\begin{aligned}
\|\mathrm{III}\|_{L^p(\Omega;\dot{H}^r(D))}^2&\leq c\int_{t_{1}}^{t_{2}}\|A^{\frac{r+s}{2}}\bar{E} (t_{2}-s) A^{-\frac{s}{2}}\|_{\mathcal{L}_2^0}^2\d s\\
& \leq c\|A^{-\frac{s}{2}}\|_{\mathcal{L}_2^0}^2 \int_{t_1}^{t_2}(t_{2}-s)^{2\theta-2} \, \d s
 = c(t_2-t_1)^{2\theta-1}\|A^{-\frac{s}{2}}\|_{\mathcal{L}_2^0}^2.
\end{aligned}
\end{equation*}
Combining these estimates together completes the proof of the theorem.
\end{proof}

\begin{remark}
The condition $\max(0,\frac{2(\alpha+\gamma)-3}{\alpha}+\epsilon)\leq r+s\leq \kappa$ is only for
$\alpha+\gamma\geq3/2$ and restricts the discussion to H\"{o}lder continuity in time.
For $u_0=0$ and trace class noise, i.e., $s=0$, $r=0$,
\begin{equation*}
\|u(t_2)-u(t_1) \|_{L^{2} (\Omega;H)} \leq c (t_2-t_1)^{\alpha+\gamma-\frac{1}{2}}   \| Q^\frac{1}{2} \|_{HS}.
\end{equation*}
The case of $\alpha=1$ and $\gamma=0$ recovers the well known regularity result of
stochastic heat equation.
\end{remark}
\end{document}